\newtheorem{theorem}{Theorem}[section]
\newtheorem{theoremal}{Theorem}[section]
\newtheorem{corollary}[theorem]{Corollary}
\newtheorem{lemma}[theorem]{Lemma}
\newtheorem{proposition}[theorem]{Proposition}
\theoremstyle{definition}
\newtheorem{definition}[theorem]{Definition}
\newtheorem{remark}[theorem]{Remark}
\newtheorem{example}[theorem]{Example}
\numberwithin{equation}{section}
\begin{document}

\title[slice starlike functions over  quaternions]{slice starlike functions over quaternions}

\author[Z.  Xu]{Zhenghua Xu}
\thanks{This work was supported by the NNSF  of China (11371337).}

\address{Zhenghua Xu, School of Mathematics, HeFei University of Technology, Hefei 230601, China}
\email{zhxu$\symbol{64}$hfut.edu.cn}
\author[G.   Ren]{Guangbin Ren}
\address{Guangbin Ren, School of Mathematical Sciences, University of Science and Technology of China, Hefei 230026, China}
\email{rengb$\symbol{64}$ustc.edu.cn}
\keywords{starlike function,    quaternions, Bieberbach conjecture, Fekete-Szeg\"{o} inequality,  growth and distortion theorems,  Bloch-Landau theorem}
\subjclass[2010]{Primary 30G35; Secondary  30C50}
\begin{abstract}
 In this paper, we initiate the study of  the   geometric function theory for  slice starlike functions over quaternions and its subclasses. This allows us to answer   negatively  some questions about the Bieberbach conjecture, the growth, distortion, and covering theorems for slice regular functions. Precisely, we find that the Bieberbach conjecture holds true for slice starlike functions in contrast to the fact that the Bieberbach conjecture fails for biholomorphic starlike mappings in higher dimensions.
We also establish some   sharp versions of the  growth, distortion, and covering theorems for quaternions.

%It is proven  that two existed algebraic definitions of  slice starlike functions are actually equivalent and we also provide it with a geometric characterization. For this function class, we also study some  closely related topics, such as the Fekete-Szeg\"{o} inequality, the  , and the radius problems.

%In this paper, we   study the Bieberbach conjecture and some related problems for   slice regular functions of a quaternionic    variable.    Firstly, we establish   the  Bieberbach theorem    for slice close-to-convex functions.  To this end, we  show  its validity for  slice starlike functions  among which  a  geometric   characterization  is obtained for this  class which is new even in one complex variable case.   A well-known Fekete-Szeg\"{o} inequality concerning coefficients  for slice starlike functions is  also presented.  Next, the   growth and distortion theorems for slice starlike functions are   established by applying    a new growth theorem for the Carath\'{e}odory class  in the quaternionic setting.  Finally, we give some results involving the radius problems for slice regular functions.

\end{abstract}
\maketitle
\section{Introduction}
 As a generalization of one complex  variable,  the theory of slice regular functions over quaternions is initiated   by Gentili and Struppa  \cite{GS1,GS2} and further developed  in  \cite{Co,Gentili-Struppa-O,Ghiloni-Perotti}.

 Many results of the  geometric function theory  of one complex variable can be  extended to the setting of slice regular functions over quaternions such as  Schwarz-Pick lemma \cite{Bisi-Stoppato},   Bohr theorem \cite{Rocchetta-Gentili-Sarfatti},   Bloch-Landau theorem \cite{Rocchetta,Xu-Wang}, Landau-Toeplitz theorem \cite{Gentili-Sarfatti}, Borel-Carath\'{e}odory theorem   \cite{Ren},   Julia theory \cite{Ren-Wang-Julia} and invariant metrics for  the quaternionic Hardy space \cite{Arcozzi-Sarfatti}.
However,  some new phenomena  occur  in the quaternionic setting. For example,  it was proven that  no Riemannian metric is invariant under all slice regular self-maps of the quaternionic ball \cite{Arcozzi-Sarfatti}.

However, some sharp  results are unfortunately restricted  to the subclass of  slice regular functions  preserving one slice and some conjectures are believed to hold  true only in this function class. For examples,   Gal,    Gonz\'{a}lez-Cervantes, and  Sabadini in \cite{Gal-Sabadini} suspect  that   the condition of preserving one slice is necessary for the Bieberbach conjecture.
In   \cite{Ren-Wang}, Ren and Wang establish the growth, distortion, and  covering theorem for slice regular functions that preserve one slice via a so-called  convex combination   identity and raised the open question  whether  the condition of preserving one slice is also necessary.

In contrast to the wealthy results of geometric function theory for holomorphic starlike functions,    there is essentially no result in the setting of quaternions except for \cite{Gori-Vlacci}.
One reason  is that without the assumption preserving one slice, all   tools such as the splitting lemma and the convex combination identity fail. The other reason is that the  study of the slice starlike functions  depends heavily on a deep close  relation between the  regular multiplication and  classical point-wise multiplication, which is unfortunately unclear up to now.

The main  aim of this article is to study the  geometric function theory for slice starlike  functions over quaternions. In particular, we shall establish  some coefficient estimates and the theorems concerning growth, distortion, and covering for slice starlike functions over quaternions.

One of the most known coefficient estimates for univalent functions is the celebrated  Bieberbach conjecture    proved by de Branges \cite{Branges} in 1985.

\begin{theoremal}\label{Bieberbach}
Let $f(z)= z+
\Sigma_{n=2}^{+\infty} a_{n}z^{n}$ be  an   injective holomorphic function on the open unit disk $\mathbb{D}=\{z\in \mathbb{C}: |z|<1\}$, then
$$|a_{n}|\leq n, \ \  n = 2, 3, \ldots$$
with strict inequality for all $n$ unless $f$ is a rotation of the Koebe function.
\end{theoremal}

 Cartan  \cite{Cartan} pointed out that this result fails in several complex variables for biholomorphic functions as shown  by providing  a counterexample (cf. \cite{Gong}).  It is quite nature to pose  the
 Bieberbach conjecture in higher dimensions  as follows:

\textit{Let  $\Omega$  be   the open unit ball or polydisk in $\mathbb{C}^{n}(n\geq2)$ and let $f:\Omega \rightarrow \mathbb{C}^{n}$ be a biholomorphic starlike mapping  normalized by $f(0)=0$ and $Df(0)=Id$. Then
$$\frac{\|D^{m}f(0)(z^{m})\|}{m!}\leq m , \quad  m= 2, 3, \ldots, \ \|z\|=1, $$
where $\| \cdot \|$ is the norm of the unit ball  or polydisk.}

Unfortunately,  this  conjecture also fails for the    unit ball. For example, take $$f(z)=(z_{1}+az_{2}^{2},z_{2}), \quad z=(z_{1},z_{2}) \in B^{2},$$
where   $B^{2}=\{ z\in \mathbb{C}^{2} : \| z \| =\sqrt{|z_{1}|^{2}+|z_{2}|^{2}}<1\}$. If $ a= 3\sqrt{3}/2 $, then $f(B^{2})$ is  starlike (see \cite[Example 5]{Roper-Suffridge}). However,  for $z=(0,1)$, we have  $\|D^{2}f(0)(z^{2})\|/2=a>2$.

Recently, a much  weak positive result for the Bieberbach conjecture  has been given  for a subclass of starlike mappings on the unit ball  \cite{Liu-Liu-Xu}. So far there still lacks sharp results in higher dimensions.

Our first main result is to establish a sharp  version  of the de Branges   theorem for    quaternions.  In contrast to the case of    several complex variables, it turns out  that  the Bieberbach   conjecture holds true  for  slice  starlike functions. This seems  to be the first sharp   de Branges   theorem in higher dimensions.

%We shall give a negative answer about Bieberbach conjecture in quaternions raised by Gal et. \cite{Gal-Sabadini}.

%In this paper, we shall focus on    slice starlike  functions.

Now we state the definition of  slice starlike functions.
Let $f$ be a   slice regular  function   in the unit ball
$\mathbb{B}$ of quaternions $\mathbb{H}$, normalized by $f(0)=0$ and $f'(0)= 1$.
We further assume that  $\mathcal{Z}_{f} = \{0\}$,
where $\mathcal{Z}_{f}$ denotes  the zero set of   $f$.
Notice that this condition is weaker than $f$ is injective.
Denote this function class  by $\mathcal{S}$, i.e.,
$$\mathcal{S}= \{f: \mathbb{B} \rightarrow \mathbb{H}:  f \ {\rm{is\ slice\ regular    \ such \ that }}\, \ \mathcal{Z}_{f} = \{0\}
\ {\rm{and}}\, \ f '(0) = 1\}.$$
The set of slice starlike functions is     defined as
$$\mathcal{S}^{*}= \big\{f: \mathbb{B} \rightarrow \mathbb{H}:  f \in \mathcal{S}  \ {\rm{ such \ that }}  \ {\rm{Re}}\,  \big( f(q)^{-1} qf'(q)\big) >0  \ {\rm{on}}\, \mathbb{B}   \big\}.$$

%Let us say more about the function class $\mathcal{S}^{*}$. To this end, we recall the definition of starlike function. Let $\Omega\subset \mathbb{R}^{n}$. The set $\Omega$ is called starlike with respect to the origin $0$, if $t\Omega\subset\Omega$ for all $t \in[0,1]$. $\Omega $ is called convex if $(1-t)\Omega+t\Omega  \subset \Omega$ for all $t \in [0, 1]$. Let $F$ be holomorphic on $\mathbb{D}$. We say that $F$ is a starlike (resp. convex) function  on $\mathbb{D}$ with respect to $0$ if $F$ is injective  on $\mathbb{D}$  and the image $F (\mathbb{D})$ is   starlike (resp. convex)  with respect to $0$.

In   one complex variable case, a  holomorphic  function  $f$ in $\mathcal{S}^{*}$ is exactly  the starlike function (see Theorem \ref{starlikeness}).  Moreover, it is proven that   the slice regular extension  of any holomorphic starlike function belongs to the function class $\mathcal{S}^{*}$ (see  Example \ref{main-example-2}).  Hence, the  results  obtained for $\mathcal{S}^{*}$  in this paper extend corresponding results for holomorphic starlike functions to  high dimensions.

In fact, the algebraic  condition  in $\mathcal{S}^{*}$ can also be described as a geometric restriction   that the element in  $\mathcal{S}^{*}$ is a slice regular function for which   its modulus is  strictly increasing in the radial. %However, we do not know if the image $f(\mathbb{B})$ is  starlike for any $f\in \mathcal{S}^{*}$.the slice
Besides, we give some equivalent  descriptions of the slice  starlike function of order $\alpha$   (see Lemma \ref{main-lemma}) which allow us to present  a  new  characterization of holomorphic starlike functions  in   one complex  variable case (see Corollary  \ref{main-corollary}) and  find  that the   so-called slice starlike in \cite[Definition 3.17]{Gal-J-Sabadini}  and  algebraically  starlike in \cite[Definition 5.20]{Gori-Vlacci} are equivalent.

Let us  introduce two important subclasses of slice regular  functions. Denote  by $ \mathbb S$ the unit $2$-sphere of purely imaginary quaternions.  For every $I \in \mathbb S $,    denote by $\mathbb C_I$ the complex  plane $ \mathbb R \oplus I\mathbb R $, isomorphic to $ \mathbb C$. Denote  by $\mathbb{B}_I$ the intersection $ \mathbb{B} \cap \mathbb C_I $.
One subclass of slice regular functions  is given by
$$\mathcal{N} (\mathbb{B}) = \{f: \mathbb{B} \rightarrow \mathbb{H}:  f  \ {\rm{ is \ slice\ regular \ such \ that \ }} f(\mathbb{B}_{I})  \subset  \mathbb{C}_{I}  \ \mathrm{for \ all } \ I \in  \mathbb{S}\}.$$
Another subclass   is given by
$$\mathcal{V} (\mathbb{B}) = \{f: \mathbb{B} \rightarrow \mathbb{H}:  f    \ {\rm{ is \ slice\ regular \ such \ that \ }} f(\mathbb{B}_{I})  \subset  \mathbb{C}_{I} \ \mathrm{for \ some}  \ I \in  \mathbb{S}\}.$$

For the normalized  and injective  function $f \in \mathcal{V} (\mathbb{B})$, a quaternionic version of de Branges theorem    was   established and  a natural question was raised if $\mathcal{V} (\mathbb{B})$ is the largest class of injective  slice regular functions in which the Bieberbach conjecture  holds \cite{Gal-Sabadini}.

In the present paper,    we shall give a negative answer about Bieberbach conjecture in quaternions raised by Gal   et al. in  \cite[p. 1359]{Gal-Sabadini}   and show    that the Bieberbach conjecture  holds for slice starlike functions as follows.
 \begin{theorem}\label{main-2}
 Let $f (q) = q+\sum_{n=2}^{+\infty} q^{n}a_{n} \in \mathcal{S}^{*}$.  Then
$$ |a_{n}|\leq n, \ \   {\rm{for\ all}}\,  n = 2,3,\ldots.$$
   Equality $|a_{n}| = n $ for a given $n  \geq 2$ holds  if and only if
   $$f(q)=q(1-qu)^{-\ast2}, \ \ \ \forall\ q\in \mathbb{B},$$
 for some $u\in \partial \mathbb{B}$.
   \end{theorem}

Note that, by Theorem \ref{main-2} and Example  \ref{main-example-1}, one can find that the Bieberbach conjecture  holds for  many  injective  slice regular functions which are not in $\mathcal{V} (\mathbb{B})$.
Moreover,   the Bieberbach conjecture over quaternions still holds for slice close-to-convex functions (see Theorem \ref{main-slice-convex}).

Related to the Bieberbach conjecture, Fekete and Szeg\"{o} in \cite{Fekete-Szego} proved a  striking inequality
\begin{eqnarray}\label{Fekete-Szego-ineq}
|a_{3}-\lambda a_{2}^{2}|\leq
\left\{
\begin{array}{lll}
3-4\lambda,  &\mathrm {if} \ \lambda\in (-\infty,0),
\\
1+2 e^{-2\lambda/(1-\lambda)},   &\mathrm {if} \ \lambda\in [0,1),
\\
4\lambda-3,  &\mathrm {if} \ \lambda\in [1,+\infty),
\end{array}
\right.
\end{eqnarray}
for any injective holomorphic function  $F(z)= z+\Sigma_{n=2}^{+\infty} a_{n}z^{n}$      on  $\mathbb{D}$.
Moreover,  inequality (\ref{Fekete-Szego-ineq}) is sharp for each  $\lambda\in \mathbb{R}$. For related  complex versions of Theorem \ref{Fekete-Szego-H}, we refer the reader to the literature \cite{Koepf,Keogh-Merkes} and \cite{Duren}.

In this paper, we shall establish the  Fekete-Szeg\"{o} type inequality for slice starlike  functions as follows.
\begin{theorem}\label{Fekete-Szego-H}
Let $f(q) = q+\sum_{n=2}^{+\infty} q^{n}a_{n}\in \mathcal{S}^{*}$.  Then, for any $\lambda\in \mathbb{H}$,
$$|a_{3}-\lambda a_{2}^{2}|\leq \max\{1, |4\lambda-3|\}.$$
 This estimate is sharp for each  $\lambda$. Equality occurs if
 $$f(q)=q(1-qu)^{-\ast2}, \quad   q\in \mathbb{B},$$
 for any $u\in \partial \mathbb{B}$.
  \end{theorem}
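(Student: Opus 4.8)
The plan is to reduce the problem to the coefficient theory of the Carath\'eodory class $\mathcal{P}$, exactly as in the classical starlike case, and to check that the noncommutativity of $\mathbb{H}$ does not interfere with the algebra. Since $f\in\mathcal{S}^{*}$, there is a function $p\in\mathcal{P}$, say $p(q)=1+\sum_{n\geq1}q^{n}c_{n}$, with $qf'(q)=(f*p)(q)$, where $*$ denotes the slice regular (star) product; this is just a restatement of the defining inequality ${\rm{Re}}\,(f^{-*}*qf')>0$, the common factor $q$ of $f$ and $qf'$ cancelling so that $p$ is regular at $0$ with $p(0)=1$. First I would expand $qf'(q)=\sum_{n\geq1}q^{n}(na_{n})$ and compare it, degree by degree, with $(f*p)(q)=\sum_{n\geq1}q^{n}\sum_{k=1}^{n}a_{k}c_{n-k}$ (coefficients kept on the right, $a_{1}=1$, $c_{0}=1$). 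The degree-two and degree-three identities read $2a_{2}=c_{1}+a_{2}$ and $3a_{3}=c_{2}+a_{2}c_{1}+a_{3}$, whence $a_{2}=c_{1}$ and $a_{3}=\tfrac12(c_{2}+c_{1}^{2})$. The point to stress is that the only product appearing, $a_{2}c_{1}=c_{1}\cdot c_{1}=c_{1}^{2}$, involves a single quaternion, so no ordering ambiguity arises and the formulas coincide with the complex ones.

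Next I would compute the functional. Using $a_{2}^{2}=c_{1}^{2}$ and keeping $\lambda$ on the left,
$$a_{3}-\lambda a_{2}^{2}=\tfrac12 c_{2}+\big(\tfrac12-\lambda\big)c_{1}^{2}=\tfrac12\big[c_{2}-(2\lambda-1)c_{1}^{2}\big].$$
Setting $\nu=2\lambda-1$, the theorem follows at once from the quaternionic Carath\'eodory estimate
$$\big|c_{2}-\nu c_{1}^{2}\big|\leq 2\,\max\{1,\,|2\nu-1|\},\qquad \nu\in\mathbb{H},$$
because $2\nu-1=4\lambda-3$ and the factor $\tfrac12$ halves the right-hand side. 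I would derive this estimate from the two scalar inequalities for $\mathcal{P}$, namely $|c_{1}|\leq2$ and $|c_{2}-\tfrac12 c_{1}^{2}|\leq 2-\tfrac12|c_{1}|^{2}$, via $c_{2}-\nu c_{1}^{2}=(c_{2}-\tfrac12 c_{1}^{2})+(\tfrac12-\nu)c_{1}^{2}$ and the triangle inequality, which gives $|c_{2}-\nu c_{1}^{2}|\leq 2+(|\tfrac12-\nu|-\tfrac12)|c_{1}|^{2}$; splitting into the cases $|\tfrac12-\nu|\leq\tfrac12$ (the bracket is nonpositive, so the bound is $2$) and $|\tfrac12-\nu|>\tfrac12$ (insert $|c_{1}|^{2}\leq4$, giving $4|\tfrac12-\nu|=2|2\nu-1|$) produces exactly $2\max\{1,|2\nu-1|\}$. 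Here I use only $|c_{1}^{2}|=|c_{1}|^{2}$, valid in $\mathbb{H}$.

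The main obstacle is therefore the quaternionic Carath\'eodory inequality $|c_{2}-\tfrac12 c_{1}^{2}|\leq 2-\tfrac12|c_{1}|^{2}$, since the coefficients $c_{n}$ are genuine quaternions rather than elements of a single slice $\mathbb{C}_{I}$. The bound $|c_{1}|\leq2$ can be obtained from a Schwarz-type lemma applied to the Cayley transform $(p-1)*(p+1)^{-*}$ of $p$, which maps $\mathbb{B}$ into $\mathbb{B}$ and vanishes at $0$; the refined second-coefficient inequality is the quaternionic counterpart of the Carath\'eodory--Toeplitz parametrization $c_{2}=\tfrac12 c_{1}^{2}+\tfrac12(4-|c_{1}|^{2})\zeta$ with $|\zeta|\leq1$, and I expect it to be the real work, to be proved either via the splitting lemma (restricting $p$ to a slice $\mathbb{C}_{I}$ and writing $p|_{\mathbb{C}_{I}}=F+GJ$, so that ${\rm{Re}}\,F>0$ is a classical Carath\'eodory function) combined with a rotation reducing to $c_{1}\geq0$, or simply by invoking the coefficient estimates for $\mathcal{P}$ already used for Theorem~\ref{main-2}.

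Finally I would verify sharpness on the quaternionic Koebe function $f(q)=q(1-qu)^{-*2}$ with $u\in\partial\mathbb{B}$. A short star-product computation, $(1-qu)^{-*}=\sum_{n\geq0}q^{n}u^{n}$ and hence $(1-qu)^{-*2}=\sum_{n\geq0}q^{n}(n+1)u^{n}$, gives $f(q)=\sum_{n\geq1}q^{n}\,nu^{n-1}$, so $a_{2}=2u$ and $a_{3}=3u^{2}$. Then $a_{3}-\lambda a_{2}^{2}=3u^{2}-4\lambda u^{2}=(3-4\lambda)u^{2}$ and, since $|u|=1$, $|a_{3}-\lambda a_{2}^{2}|=|4\lambda-3|$, which equals $\max\{1,|4\lambda-3|\}$ whenever $|4\lambda-3|\geq1$; for the complementary range one attains the value $1$ by the analogous odd function $q(1-q^{2}v)^{-*}$ (with $a_{2}=0$, $|a_{3}|=1$), so the estimate is sharp for every $\lambda$.
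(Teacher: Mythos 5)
Your argument is correct and is essentially the paper's own: the refined Carath\'eodory bound $|c_{2}-\tfrac12 c_{1}^{2}|\leq 2-\tfrac12|c_{1}|^{2}$ that you flag as ``the real work'' is precisely Theorem \ref{sharper-Caratheodory-theorem}, which the paper proves via the Cayley transform of $p$ together with Lemma \ref{Schwarz-Pick-Cor} (not via the splitting lemma, which on a single slice only controls the $\mathbb{C}_I$-part of the coefficients), and the paper's proof of Theorem \ref{Fekete-Szego-H} merely repackages your computation by working directly with $g=(qf'+f)^{-\ast}\ast(qf'-f)=(1+p)^{-\ast}\ast(p-1)$, whose Taylor data $g'(0)=a_{2}/2$ and $g''(0)/2=a_{3}-\tfrac34 a_{2}^{2}$ feed into the same triangle-inequality case split on $|4\lambda-3|$. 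Your sharpness discussion is in fact more careful than the paper's one-line remark, since you correctly observe that $q(1-qu)^{-\ast2}$ attains the bound only when $|4\lambda-3|\geq 1$ and supply the odd Koebe function $q(1-q^{2}v)^{-\ast}$ for the complementary range.
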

Note that if $\lambda=0$ in Theorem \ref{Fekete-Szego-H}, then this result is already obtained in Theorem \ref{main-2}.

As a very important consequence of   Theorem \ref{Bieberbach} for the second-order coefficient, one may deduce  the  following  well-known growth and distortion theorems.
\begin{theoremal}\label{Th:dg-theorem}
Let $F(z)= z+\Sigma_{n=2}^{+\infty} a_{n}z^{n}$
be an injective holomorphic function on
$\mathbb D $. Then for each $z\in\mathbb D$, the following inequalities hold:
\begin{eqnarray}\label{eq:1-Holo}
\frac{|z|}{(1+|z|)^2}\leq |F(z)|\leq \frac{|z|}{(1-|z|)^2};
\end{eqnarray}
\begin{eqnarray}\label{eq:2-Holo}
\frac{1-|z|}{(1+|z|)^3}\leq |F'(z)|\leq \frac{1+|z|}{(1-|z|)^3};
\end{eqnarray}

\begin{eqnarray}\label{eq:3-Holo}
\frac{1-|z|}{1+|z|}\leq \bigg|\frac{zF'(z)}{F(z)}\bigg|\leq \frac{1+|z|}{1-|z|}.
\end{eqnarray}
Moreover, equality holds for one of these six inequalities at some $z_0\in \mathbb D\setminus\{0\}$ if and only if  $$F(z)=\frac{z}{(1-e^{i\theta} z)^2}\ ,\qquad \forall\,\, z\in \mathbb D,$$ for some $ \theta \in \mathbb R$.
\end{theoremal}

However,  the growth and distortion theorems fail   generally in $\mathbb{C}^{n}$  ($n\geq2$)
and only hold for some subclass  such as  starlike or convex mappings
(cf. \cite{Gong-conv,Graham-Kohr}).
Based on  Theorem \ref{Th:dg-theorem},  the growth and distortion
theorems   were formulated  for   normalized injective slice regular functions in the special class $\mathcal{N} (\mathbb{B})$ (see \cite[Theorem 3.11]{Gal-Sabadini}).    See  \cite{Ren-Wang} for further  extensions  to    normalized injective (on $\mathbb{B}_{I}$)  slice regular functions in $\mathcal{V} (\mathbb{B})$.

In this  paper,  we  shall   answer an open problem in   \cite{Ren-Wang} and show  that the  class $\mathcal{V} (\mathbb{B})$  is  not the largest subclass  of slice regular functions such that the  corresponding growth, distortion,  and covering  theorems hold.  Indeed, it is proven   that   the growth, distortion,  and covering  theorems    are  valid in a tighter form for    $\mathcal{S}^{*}$  by applying    a new growth theorem for the Carath\'{e}odory class  in the quaternionic setting.  They are essentially the first significant
theorems belonging to the theory of quaternions itself for that the Taylor coefficients of slice regular functions need not belong to one slice.

%Note that   coefficients of $f\in \mathcal{S}^{*}$ do not belong to one  complex plane generally. Hence, we cannot achieve our results by applying  Theorem \ref{Th:dg-theorem}.

Notice that the close analytic relation between convex and starlike functions  known as Alexander's theorem asserts that   $F$ is convex if and only if $zF'(z)$ is starlike for the holomorphic function $F$ on $\mathbb{D}$ normalized by  $F(0)=0$ and $F'(0)=1$. In view of this result,   we give another coefficient estimate  for a subclass of slice regular functions from Theorem \ref{main-2}.

\begin{theorem}\label{main-convex}
Let $f (q) = q+\sum_{n=2}^{+\infty} q^{n}a_{n}  $ be a slice regular function on $\mathbb{B}$ such that $qf '(q)\in \mathcal{S}^{*}$.  Then
$$ |a_{n}|\leq 1, \ \   {\rm{for\ all}}\,  n = 2,3,\ldots.$$
Equality $|a_{n}| = 1 $ for a given $n  \geq 2$ holds  if and only if
   $$f(q)=q(1-qu)^{-\ast }, \ \ \ \forall\  q\in \mathbb{B},$$
for some $u\in \partial \mathbb{B}$.
   \end{theorem}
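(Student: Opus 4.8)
The plan is to exploit the quaternionic analogue of Alexander's theorem that is built into the hypothesis: writing $g(q):=qf'(q)$, the assumption is precisely that $g\in\mathcal{S}^{*}$, so the coefficient problem for $f$ can be reduced to the already-established bound for $g$ in Theorem \ref{main-2}. First I would compute the power series of $g$. Since the slice derivative acts termwise on the right-coefficient series $f(q)=q+\sum_{n=2}^{+\infty}q^{n}a_{n}$, we have $f'(q)=1+\sum_{n=2}^{+\infty}nq^{n-1}a_{n}$, and hence
$$g(q)=qf'(q)=q+\sum_{n=2}^{+\infty}q^{n}(na_{n}).$$
Thus if we write $g(q)=q+\sum_{n=2}^{+\infty}q^{n}b_{n}$, the coefficients obey the clean identity $b_{n}=na_{n}$, and the normalization $g'(0)=1$ together with $g(0)=0$ is inherited from $f'(0)=1$.

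The estimate is then immediate. Because $\mathcal{S}^{*}\subset\mathcal{C}$, Theorem \ref{main-2} applies to $g$ and yields $|b_{n}|\leq n$ for every $n\geq2$. Dividing by $n$ gives $|a_{n}|=|b_{n}|/n\leq1$, which is the desired bound.

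For the equality case, observe that $|a_{n}|=1$ holds for a given $n\geq2$ if and only if $|b_{n}|=n$. By the sharpness part of Theorem \ref{main-2}, the latter occurs exactly when $g(q)=q(1-qu)^{-\ast2}$ for some $u\in\partial\mathbb{B}$, so it remains to show that this identity for $g$ is equivalent to $f(q)=q(1-qu)^{-\ast}$. To see this I would recover $f$ from $g=qf'$ by integration: the relation $g(q)=q(1-qu)^{-\ast2}$ forces $f'(q)=(1-qu)^{-\ast2}$, while a termwise computation with the $\ast$-product gives $(1-qu)^{-\ast}=\sum_{n\geq0}q^{n}u^{n}$ and $(1-qu)^{-\ast2}=\sum_{n\geq0}(n+1)q^{n}u^{n}$. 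Integrating $f'$ termwise with $f(0)=0$ then produces $f(q)=\sum_{m\geq1}q^{m}u^{m-1}=q(1-qu)^{-\ast}$, and conversely this $f$ satisfies $qf'(q)=q(1-qu)^{-\ast2}$, closing the loop.

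The only genuinely delicate step is this last one: I must take the antiderivative of the right-coefficient series correctly and verify the $\ast$-product identities for $(1-qu)^{-\ast}$ and $(1-qu)^{-\ast2}$ by matching coefficients, since in the noncommutative setting these expressions cannot be manipulated as ordinary rational functions. Once the explicit convolution identities $(1-qu)^{-\ast k}=\sum_{n\geq0}\binom{n+k-1}{n}q^{n}u^{n}$ are in hand (specializing to $k=1,2$), the remaining verification is a routine bookkeeping of coefficients.
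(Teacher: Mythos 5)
Your proposal is correct and follows exactly the route the paper intends: the paper obtains Theorem \ref{main-convex} directly from Theorem \ref{main-2} by observing that $qf'(q)=q+\sum_{n=2}^{+\infty}q^{n}(na_{n})$ lies in $\mathcal{S}^{*}\subset\mathcal{C}$, so $|na_{n}|\le n$ gives $|a_{n}|\le 1$. Your handling of the equality case, via the coefficient identities $(1-qu)^{-\ast}=\sum_{n\ge0}q^{n}u^{n}$ and $(1-qu)^{-\ast2}=\sum_{n\ge0}(n+1)q^{n}u^{n}$, is the standard verification and matches the extremal functions stated in the paper.
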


Theorem  \ref{main-convex} implies directly the following  growth theorem:
$$|f(q)|\leq \frac{|q|}{1-|q|},\quad  \forall \ q \in \mathbb{B}, $$
for any slice regular function $f$ on $\mathbb{B}$   such that $f(0)=0 $ and $qf '(q)\in \mathcal{S}^{*}$.

For normalized   convex functions $F$ on $\mathbb{D}$, there holds a sharper growth theorem
 \begin{eqnarray}\label{Holo-convex-growth}
 \frac{|z|}{1+|z|}\leq  |F(z)|\leq \frac{|z|}{1-|z|},\quad  \forall \ z \in \mathbb{D},
\end{eqnarray}
 which was generalized to $\mathbb{C}^{n}$ (see e.g., \cite{Graham-Kohr,Gong-conv,Liu-Ren}).

Usually,   inequality (\ref{Holo-convex-growth}) is just a consequence of the following distortion theorem in one and several  complex variables.
 \begin{theorem}\label{convex-distortion}  Let $f $ be a slice regular function on $\mathbb{B}$ such that $qf '(q)\in \mathcal{S}^{*}$. Then
$$
\frac{1 }{(1+|q|)^{2}}\leq |f'(q)|\leq \frac{1 }{(1-|q|)^{2}},\quad \forall \ q \in \mathbb{B}.$$
Estimates are sharp. For each $q\in \mathbb{B}\setminus\{0\}$, equality occurs if
 $$f(q)=q(1-qu)^{-\ast}, \quad   q\in \mathbb{B},$$
 for some $u\in \partial \mathbb{B}$.
 \end{theorem}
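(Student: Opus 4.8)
The plan is to reduce the assertion to the growth theorem for the class $\mathcal{S}^{*}$ (Section $5$) by means of the quaternionic analogue of Alexander's theorem, and then to peel off a factor of $|q|$. Set $h(q):=qf'(q)$. The first observation is that $h$ is the slice regular function $\mathrm{id}*f'$, where $\mathrm{id}(q)=q$, $f'$ is the slice derivative, and $*$ is the $*$-product; more importantly, this $*$-product agrees with the genuine quaternionic product $q\,f'(q)$ at every point. Indeed, applying the representation $(g*k)(q)=g(q)\,k\big(g(q)^{-1}qg(q)\big)$, valid whenever $g(q)\neq0$, to $g=\mathrm{id}$ gives $g(q)^{-1}qg(q)=q$, so that $(\mathrm{id}*f')(q)=q\,f'(q)=h(q)$ as an honest product in $\mathbb{H}$. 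By hypothesis $h\in\mathcal{S}^{*}$, and since the quaternionic modulus is multiplicative we obtain the key factorization $|h(q)|=|q|\,|f'(q)|$ for all $q\in\mathbb{B}$.

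Next I would invoke the growth theorem for $\mathcal{S}^{*}$, which asserts that every $g\in\mathcal{S}^{*}$ satisfies
$$\frac{|q|}{(1+|q|)^{2}}\le|g(q)|\le\frac{|q|}{(1-|q|)^{2}},\qquad q\in\mathbb{B}.$$
Applying this to $h$ and substituting $|h(q)|=|q|\,|f'(q)|$, then dividing by $|q|$ for $q\neq0$, yields precisely
$$\frac{1}{(1+|q|)^{2}}\le|f'(q)|\le\frac{1}{(1-|q|)^{2}}.$$
The point $q=0$ is handled separately: since $h\in\mathcal{S}^{*}\subset\mathcal{S}$ forces $h'(0)=1$ and $h'(0)=f'(0)$, we have $f'(0)=1$, and both bounds equal $1$ there, so the estimate holds on all of $\mathbb{B}$.

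For sharpness I would transport the extremals of the $\mathcal{S}^{*}$-growth theorem back through the relation $h=qf'$. Those bounds are attained by the quaternionic Koebe function $h(q)=q(1-qu)^{-\ast2}$ with $u\in\partial\mathbb{B}$ (the extremal appearing in Theorem \ref{main-2}). Solving $qf'(q)=h(q)$ gives $f'(q)=(1-qu)^{-\ast2}$, and a routine differentiation check with the $*$-product rule shows that $f(q)=q(1-qu)^{-\ast}$ satisfies $qf'(q)=q(1-qu)^{-\ast2}=h(q)\in\mathcal{S}^{*}$; this is exactly the candidate in the statement. Since equality in the growth estimate for this $h$ can be arranged at any prescribed $q\in\mathbb{B}\setminus\{0\}$ by a suitable choice of $u$, dividing by $|q|$ shows the corresponding distortion bound is attained at the same point.

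The main obstacle is the opening step: one must verify that the hypothesis ``$qf'(q)\in\mathcal{S}^{*}$'' is to be read as the slice regular function $\mathrm{id}*f'$ and, crucially, that this function coincides pointwise with the quaternionic product $q\,f'(q)$, for only then does the modulus factor as $|q|\,|f'(q)|$. Because $\mathbb{H}$ is noncommutative the $*$-product is generally not pointwise, and it is the special form of the left factor $\mathrm{id}$ that makes the two coincide; confirming this identity (and that the resulting $h$ truly lies in $\mathcal{S}^{*}$ with $h'(0)=1$) is the real content, after which the distortion inequalities follow in one line from the growth theorem.
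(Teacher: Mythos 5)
Your proposal is correct and follows essentially the same route as the paper, which disposes of this theorem in one line by remarking that it ``is just a consequence of (\ref{eq:1})'', i.e.\ of the growth estimate for $\mathcal{S}^{*}$ applied to $h(q)=qf'(q)$ followed by division by $|q|$. Your verification that $\mathrm{id}*f'$ coincides pointwise with the quaternionic product $qf'(q)$ (so that $|h(q)|=|q|\,|f'(q)|$), the treatment of $q=0$ via $f'(0)=h'(0)=1$, and the identification of the extremal $f(q)=q(1-qu)^{-\ast}$ with $qf'(q)=q(1-qu)^{-\ast2}$ are exactly the details the paper leaves implicit.
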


 However, the method adopted in  one and several
complex variables cannot be used to our setting due to   higher dimensions as well as the non-commutativity   of quaternions and we do not know if the image of $qf'(q)$ is  convex for any $f\in \mathcal{S}^{*}$.  Fortunately, we can obtain a Koebe type  theorem  for slice regular functions with convex   image as follows.
%(q) = q+\sum_{n=2}^{+\infty} q^{n}a_{n}
\begin{theorem} \label{covering-for-convex}
Let $f$ be a slice regular function on $\mathbb{B}$ with   convex image and $f'(0)=1$. Then  $f(\mathbb B)$ contains
an  open ball centered at $f(0)$ of radius $ 1/2 $.
Moreover, the constant $ 1/2 $ is optimal.
\end{theorem}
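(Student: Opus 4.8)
The plan is to transport the classical proof of the covering theorem for convex univalent functions, whose engine is the construction of a Carath\'eodory function from a supporting hyperplane at the nearest boundary point of the image. After translating by $f(0)$ we may assume $f(0)=0$, so that $f(q)=q+\sum_{n=2}^{+\infty}q^{n}a_{n}$ has convex open image $\Omega=f(\mathbb{B})\subset\mathbb{H}\cong\mathbb{R}^{4}$; openness follows from the open mapping theorem for nonconstant slice regular functions, and $0$ is an interior point because $f'(0)=1\neq0$. It then suffices to prove $d:={\rm dist}(0,\partial\Omega)\geq1/2$, since every $w$ with $|w|<d$ lies outside the closed set $\mathbb{H}\setminus\Omega$ and hence in $\Omega$, so that the open ball $\{|w|<d\}\supseteq\{|w|<1/2\}$ is contained in $\Omega$.

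Next I would locate a nearest boundary point. If $\Omega=\mathbb{H}$ the statement is trivial; otherwise $\mathbb{H}\setminus\Omega$ is nonempty and closed, so the distance $d$ is attained at some $w_{0}\in\partial\Omega$ with $|w_{0}|=d$, and convexity furnishes a supporting hyperplane there with unit normal $\nu=w_{0}/d$. Writing the real inner product on $\mathbb{H}$ as $\langle a,b\rangle={\rm Re}(a\bar b)$, the supporting condition reads $\langle w,\nu\rangle\le d$ for all $w\in\Omega$; as $\Omega$ is open and disjoint from its supporting hyperplane, this is strict on the image, i.e. ${\rm Re}\big(f(q)\bar\nu\big)<d$ for all $q\in\mathbb{B}$. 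The crucial observation is that \emph{right} multiplication by the constant $\bar\nu$ preserves slice regularity (whereas left multiplication does not), so $f(q)\bar\nu=q\bar\nu+\sum_{n\ge2}q^{n}(a_{n}\bar\nu)$ is again slice regular and therefore
\[
p(q):=1-\frac{1}{d}\,f(q)\bar\nu
\]
is slice regular with $p(0)=1$ and ${\rm Re}\,p(q)=1-\frac{1}{d}{\rm Re}\big(f(q)\bar\nu\big)>0$; that is, $p\in\mathcal{P}$.

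Finally I would read off the first coefficient: from $p(q)=1-\frac{1}{d}\,q\bar\nu+\cdots$ the $q^{1}$-coefficient is $c_{1}=-\bar\nu/d$ with $|c_{1}|=1/d$, and the Carath\'eodory coefficient bound $|c_{1}|\le2$ for the class $\mathcal{P}$ forces $1/d\le2$, i.e. $d\ge1/2$, which is the covering. To keep the argument self-contained one may avoid the quaternionic Carath\'eodory estimate altogether: restrict $p$ to the slice $\mathbb{C}_{I}$ with $I=\mathrm{Im}\,\nu/|\mathrm{Im}\,\nu|$ (so $\nu\in\mathbb{C}_{I}$) and split $p_{I}=\tilde F+\tilde G J$ with $J\perp I$; since ${\rm Re}\,p_{I}={\rm Re}\,\tilde F$, the component $\tilde F$ is a genuine complex Carath\'eodory function on $\mathbb{B}_{I}\cong\mathbb{D}$ whose linear coefficient is exactly $-\bar\nu/d$, and the classical bound applies. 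For optimality, the map $f(q)=q(1-q)^{-\ast}=\sum_{n\ge1}q^{n}$ restricts on every slice to $z/(1-z)$, so by the representation formula $f(\mathbb{B})$ is the half-space $\{w\in\mathbb{H}:{\rm Re}\,w>-1/2\}$, which is convex with $f'(0)=1$ and nearest boundary point at distance exactly $1/2$. The only genuinely delicate step is the passage from the geometric supporting-hyperplane inequality to a bona fide member of $\mathcal{P}$: this is precisely where the right-multiplication trick together with the identity $\langle f(q),\nu\rangle={\rm Re}\big(f(q)\bar\nu\big)$ does the essential work.
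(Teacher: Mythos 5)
Your proof is correct, and its geometric core coincides with the paper's: translate so that $f(0)=0$, take the boundary point $w_0$ of the convex image nearest to the origin, use the supporting hyperplane there to get the strict inequality ${\rm Re}\big(f(q)\overline{w_0}\big)<|w_0|^2$, and exploit the fact that \emph{right} multiplication by the constant $\overline{w_0}$ keeps $f(q)\overline{w_0}$ slice regular. Where you diverge is in the final analytic step. The paper feeds this inequality into the regular Cayley-type quotient $g=(2|w_0|^2-f\,\overline{w_0})^{-\ast}\ast(f\,\overline{w_0})$, uses its Proposition 3.7 to see $|g|<1$, and applies the Schwarz lemma to get $|g'(0)|=(2|w_0|)^{-1}\le 1$. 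You instead form $p(q)=1-d^{-1}f(q)\overline{\nu}\in\mathcal{P}$ and invoke the Carath\'eodory coefficient bound $|c_1|\le 2$ (Theorem 4.2 in the paper, quoted from Ren--Wang); since that bound is itself proved by exactly such a Cayley transform plus the Schwarz lemma, the two arguments are equivalent in substance, but yours avoids the $\ast$-inverse and the quotient-relation proposition entirely, because $1-d^{-1}f(q)\overline{\nu}$ is manifestly slice regular. Your fallback via the splitting $p_I=\tilde F+\tilde G J$ on the slice containing $\nu$ is also sound (the linear coefficient $-\overline{\nu}/d$ lies in $\mathbb{C}_I$, so it survives as the linear coefficient of $\tilde F$, and ${\rm Re}\,\tilde G J=0$), reducing everything to the classical one-variable Carath\'eodory inequality; just note the trivial adjustment when $\nu$ is real. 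Two small points worth making explicit in a write-up: the identification of the supporting normal at $w_0$ with $w_0/d$ uses that the open ball $B(0,d)$ lies in $\Omega$, and the degenerate case $f(\mathbb{B})=\mathbb{H}$ should be dispatched first, as you and the paper both do. The optimality example $q(1-q)^{-\ast}$ is the same as the paper's.
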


It is worth mentioning  that  many  methods in complex cases   may fail  in the   quaternionic setting since   the  regularity does not keep under usual product and composition  of two slice regular functions due to the  non-commutativity of quaternions. For example,   the subordination argument  in \cite{Graham-Varolin} is  not suitable to prove    Theorem \ref{convex-algebra}. In addition, the non-commutativity of quaternions
 forces us to formulate   new auxiliary functions as in the proof of Theorem \ref{Rogosinski} and to explore new  methods   as in Theorems  \ref{convex-algebra} and \ref{covering-for-convex}.  Especially, Proposition \ref{main-proposition} sets up a bridge
of point-wise and regular multiplication for slice regular functions.

\vskip10pt

The remaining part of this paper is organized as follows.

In Sect. $2$, we set up basic notation  and give some preliminary results from the theory of slice regular functions over quaternions. At  the end of this section, we introduce the concepts   of   slice starlike functions  of order  $\alpha$ and  slice close-to-convex
functions.

In Sect. $3$, we first offer some examples in the space  $\mathcal{S}^{*}$  and establish  some useful lemmas to prove Theorems \ref{main-2}  and \ref{Fekete-Szego-H}.  We also  recall  a Schwarz lemma  from \cite{GS2}  and establish the Rogosinski lemma for slice regular functions which is a sharpened form of the Schwarz lemma.

In Sect. $4$,  we give a coefficient  estimate  for the Carath\'{e}odory class in the quaternionic setting   and  prove  Theorems \ref{main-2} and \ref{Fekete-Szego-H}.

In Sect. $5$,  we shall establish the growth and distortion theorems for some classes of slice regular functions in the quaternionic setting corresponding to Theorem \ref{Th:dg-theorem}. Moreover,  we formulate a result of Hayman  (see Theorem \ref{Hayman-H}) which is a precise  version of the growth theorem.

In Sect. $6$, we shall give some results involving  the radius problems for slice regular functions. Specially,  we establish  the quaternionic analogs   of  Bohr theorem and   Rogosinski theorem  which are of independent interest. As an application of the growth theorem,  the  Koebe  type one-quarter theorem  for the class $\mathcal{S}^{*}$ is also obtained.  Finally, we prove Theorem \ref{covering-for-convex} by Proposition \ref{main-proposition} which  allows   to establish a  Bloch-Landau theorem for slice regular functions with convex image.

\section{Preliminaries}
In this section,  we recall some necessary definitions and preliminary results on slice regular functions.
To have a more complete insight on the theory, we refer the reader to the monograph \cite{Co2,GSS}.

Let $\mathbb H$ denote the non-commutative, associative, real algebra of quaternions with standard basis $\{1,\,i,\,j, \,k\}$,  subject to the multiplication rules
$$i^2=j^2=k^2=ijk=-1.$$
 Every element $q=x_0+x_1i+x_2j+x_3k$ in $\mathbb H$ is composed by the \textit{real} part ${\rm{Re}}\, (q)=x_0$ and the \textit{imaginary} part ${\rm{Im}}\, (q)=x_1i+x_2j+x_3k$. The \textit{conjugate} of $q\in \mathbb H$ is then $\bar{q}={\rm{Re}}\, (q)-{\rm{Im}}\, (q)$ and its \textit{modulus} is defined by $|q|^2=q\overline{q}=|{\rm{Re}}\, (q)|^2+|{\rm{Im}}\, (q)|^2$. We can therefore calculate the multiplicative inverse of each $q\neq0$ as $ q^{-1} =|q|^{-2}\overline{q}$.
 Every $q \in \mathbb H $ can be expressed as $q = x + yI$, where $x, y \in \mathbb R$ and
$$I=\dfrac{{\rm{Im}}\, (q)}{|{\rm{Im}}\, (q)|}$$
 if ${\rm{Im}}\, q\neq 0$, otherwise we take $I$ arbitrarily such that $I^2=-1$,
where  $I $ is an element of the unit 2-sphere of purely imaginary quaternions
$$\mathbb S=\big\{q \in \mathbb H:q^2 =-1\big\}.$$

For every $I \in \mathbb S $, we will denote by $\mathbb C_I$ the plane $ \mathbb R \oplus I\mathbb R $, isomorphic to $ \mathbb C$, and, if $\Omega \subset \mathbb H$, by $\Omega_I$ the intersection $ \Omega \cap \mathbb C_I $.

We can now recall the definition of slice regularity.
\begin{definition} \label{de: regular} Let $\Omega$ be a domain in $\mathbb H$. A function $f :\Omega \rightarrow \mathbb H$ is called (left) \emph{slice} \emph{regular} if, for all $ I \in \mathbb S$, its restriction $f_I$ to $\Omega_I$ is \emph{holomorphic}, i.e., it has continuous partial derivatives and satisfies
$$\bar{\partial}_I f(x+yI):=\frac{1}{2}\left(\frac{\partial}{\partial x}+I\frac{\partial}{\partial y}\right)f_I (x+yI)=0$$
for all $x+yI\in \Omega_I $.
 \end{definition}

\begin{definition} \label{de: derivative}
Let $f :\mathbb{B} \rightarrow \mathbb H$  be a slice  regular function. For each $I\in\mathbb S$, the \textit{$I$-derivative} of $f$ at $q=x+yI$
is defined by
$$\partial_I f(x+yI):=\frac{1}{2}\left(\frac{\partial}{\partial x}-I\frac{\partial}{\partial y}\right)f_I (x+yI)$$
on $\mathbb{B} _I$. The \textit{slice derivative} of $f$ is the function $f'$ defined by $\partial_I f$ on $\mathbb{B}_I$ for all $I\in\mathbb S$.
 \end{definition}

 All slice regular functions on $\mathbb{B}$ can be expressed as power series.
\begin{theorem}\label{Taylor}
Let $f: \mathbb{B} \rightarrow \mathbb H$ be  a  slice regular function. Then
 $$f(q)=\sum\limits_{n=0}^{\infty}q^na_n, \quad  \mathrm{ with} \ a_n=\frac{f^{(n)}(0)}{n!} $$
for all  $q\in \mathbb{B}$.
\end{theorem}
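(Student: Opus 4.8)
The plan is to reduce the statement to the one–variable holomorphic theory on a single slice and then to remove the dependence on the slice. Fix $I\in\mathbb S$ and consider the restriction $f_I$ of $f$ to the disk $\mathbb B_I=\mathbb B\cap\mathbb C_I$. By Definition \ref{de: regular}, $f_I$ is holomorphic, i.e. $\bar\partial_I f_I=0$. Regarding $\mathbb H$ as a two–dimensional complex vector space whose complex structure is left multiplication by $I$, the condition $\bar\partial_I f_I=0$ says exactly that $f_I$ is a vector–valued holomorphic function of the complex variable $z=x+yI$ on $\mathbb B_I$. Such a function admits a Taylor expansion (this is the content of the Splitting Lemma of the theory, cf. the monographs \cite{GSS,Co2}), so that
$$f_I(z)=\sum_{n=0}^{\infty}z^{n}a_{n}^{I},\qquad a_{n}^{I}=\frac{1}{n!}\,\partial_I^{\,n}f_I(0),$$
with absolute and uniform convergence on compact subsets of $\mathbb B_I$, where I use that the holomorphic ($z$-)derivative coincides with the operator $\partial_I$ of Definition \ref{de: derivative}.

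The key step is to identify the coefficients $a_n^I$ intrinsically. From $\bar\partial_I f_I=\frac12(\partial_x+I\partial_y)f_I=0$ one gets $-I\partial_y f_I=\partial_x f_I$, whence
$$\partial_I f_I=\frac{1}{2}\big(\partial_x-I\partial_y\big)f_I=\frac{1}{2}\big(\partial_x f_I+\partial_x f_I\big)=\partial_x f_I .$$
Thus the slice derivative coincides, \emph{on each slice}, with the real partial derivative $\partial_x$. Since $\partial_x$ commutes with $\bar\partial_I$, the functions $\partial_x^{\,k}f_I$ are again holomorphic on $\mathbb B_I$, and iterating the identity gives $\partial_I^{\,n}f_I=\partial_x^{\,n}f_I$ for all $n$. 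Now I evaluate at the real point $0$. Every slice $\mathbb C_I$ contains the real axis, and there $f_I$ agrees with the common restriction $g:=f|_{\mathbb B\cap\mathbb R}$; differentiating along $\mathbb R$ and arguing by induction shows $\partial_x^{\,n}f_I(0)=g^{(n)}(0)$ for every $I$. Hence $a_n^I=\frac{1}{n!}g^{(n)}(0)=\frac{1}{n!}f^{(n)}(0)$ is \textbf{independent of the slice}, which is precisely the asserted formula $a_n=f^{(n)}(0)/n!$.

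It remains to globalize. For arbitrary $q\in\mathbb B$ choose $I\in\mathbb S$ with $q\in\mathbb C_I$; then $f(q)=f_I(q)=\sum_{n\ge0}q^{n}a_n$ by the slice expansion, the coefficients being the slice–independent $a_n$ just found. For convergence on all of $\mathbb B$, note that the series converges on $\mathbb B_I$, a disk of radius $1$, for one (hence every) slice; Cauchy--Hadamard then yields $\limsup_{n}|a_n|^{1/n}\le 1$, so $\sum_n q^{n}a_n$ converges absolutely for every $|q|<1$ and uniformly on compact subsets of $\mathbb B$. This establishes the expansion throughout $\mathbb B$.

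The main obstacle is the slice–independence of the coefficients: a priori each $a_n^I$ is computed inside $\mathbb C_I$ and could vary with $I$. The identity $\partial_I f_I=\partial_x f_I$, combined with the fact that the various slices share the real axis, is exactly what collapses this dependence, and I expect this to be the delicate point to state carefully. The only imported ingredient is the existence of the slice–wise Taylor expansion of the $\mathbb H$–valued holomorphic function $f_I$; verifying that $\mathbb H$ may be treated as a complex vector space under left multiplication by $I$ and that $\partial_x$ commutes with $\bar\partial_I$ is routine.
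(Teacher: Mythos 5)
Your proof is correct. The paper itself states this expansion without proof, treating it as a known preliminary from the monographs \cite{GSS,Co2}; your argument --- vector-valued holomorphy of $f_I$ on a slice (the splitting lemma), the identity $\partial_I f_I=\partial_x f_I$ forced by $\bar\partial_I f_I=0$, and slice-independence of the coefficients because all slices share the real axis through $0$ --- is precisely the standard proof given in those references, so there is nothing to fault and nothing genuinely different to compare.
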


In fact, there are  some  limitations in the   expansion given in Theorem \ref{Taylor}. Hence, a new type of expansion occurs in \cite{Stoppato-2}.

\begin{definition}Let $f$, $g:\mathbb{B} \rightarrow \mathbb H$ be two slice regular functions  of the form $$f(q)=\sum\limits_{n=0}^{\infty}q^na_n,\qquad g(q)=\sum\limits_{n=0}^{\infty}q^nb_n.$$ The regular product (or $\ast$-product) of $f$ and $g$ is the slice regular function defined by$$f\ast g(q)=\sum\limits_{n=0}^{\infty}q^n\bigg(\sum\limits_{k=0}^n a_kb_{n-k}\bigg).$$\end{definition}

 \begin{definition} \label{de: R-conjugate}
Let $ f(q)=\sum\limits_{n=0}^{\infty}q^na_n $ be a slice regular function on $\mathbb{B} $. We define the \emph{regular conjugate} of $f$ as
$$f^c(q)=\sum\limits_{n=0}^{\infty}q^n\bar{a}_n,$$
and the \emph{symmetrization} of $f$ as
$$f^s(q)=f\ast f^c(q)=f^c\ast f(q)=\sum\limits_{n=0}^{\infty}q^n\bigg(\sum\limits_{k=0}^n a_k\bar{a}_{n-k}\bigg).$$
Both $f^c$ and $f^s$ are slice regular functions on $\mathbb{B} $.
\end{definition}

Let $\mathcal{Z}_{f^s}$ denote the zero set of the symmetrization $f^s$ of $f$.
\begin{definition} \label{de: R-Inverse}
Let $f$ be a slice regular function on $\mathbb{B}$. If $f$ does not vanish identically, its \emph{regular reciprocal} is the function defined by
$$f^{-\ast}(q):=f^s(q)^{-1}f^c(q)$$
which  is slice regular on $\mathbb{B}  \setminus \mathcal{Z}_{f^s}$.
\end{definition}

\begin{proposition} \label{prop:Quotient Relation}
Let $f$ and $g$ be slice regular functions on  $\mathbb{B} $. Then for all $q\in \mathbb{B} \setminus \mathcal{Z}_{f^s}$, $$f^{-\ast}\ast g(q)=f(T_f(q))^{-1}g(T_f(q)),$$
where $T_f:\mathbb{B} \setminus \mathcal{Z}_{f^s}\rightarrow \mathbb{B} \setminus \mathcal{Z}_{f^s}$ is defined by
$T_f(q)=f^c(q)^{-1}qf^c(q)$. Furthermore, $T_f$ and $T_{f^c}$ are mutual inverses so that $T_f$ is a diffeomorphism.
\end{proposition}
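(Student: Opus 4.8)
The engine of the whole statement is the pointwise evaluation formula for $\ast$-products: for slice regular $u,v$ on $\mathbb{B}$ and $q\in\mathbb{B}$ with $u(q)\neq 0$, one has $(u\ast v)(q)=u(q)\,v\big(u(q)^{-1}qu(q)\big)$, while $(u\ast v)(q)=0$ when $u(q)=0$. I would take this as known (it is standard, see \cite{GSS,Co2}, and in any case follows from a one-slice computation: fix $I\in\mathbb{S}$, expand $u_I,v_I$ in powers of $q\in\mathbb{C}_I$, and rearrange the Cauchy product). Before using it I would record three elementary facts. First, the symmetrization has real coefficients, since $\overline{\sum_{k=0}^n a_k\bar a_{n-k}}=\sum_{k=0}^n a_k\bar a_{n-k}$; hence $f^s(q)\in\mathbb{C}_I$ whenever $q\in\mathbb{C}_I$, so $f^s(q)$ commutes with $q$, and $\mathcal{Z}_{f^s}$ is a union of $2$-spheres $x+y\mathbb{S}$. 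Second, $(f^c)^c=f$ and $(f^c)^s=f^c\ast f=f^s$. Third, on $\mathbb{B}\setminus\mathcal{Z}_{f^s}$ one has $f^c(q)\neq 0$: indeed $f^s=f^c\ast f$, so $f^c(q)=0$ would force $f^s(q)=0$ by the evaluation formula; thus $T_f$ is well defined there, and since $T_f(q)=f^c(q)^{-1}qf^c(q)$ is a conjugate of $q$ it preserves $|q|$ and the sphere through $q$, whence $T_f$ maps $\mathbb{B}\setminus\mathcal{Z}_{f^s}$ into itself.

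The key identity I would extract is obtained by applying the evaluation formula to $f^s=f^c\ast f$ at a point $q$ with $f^c(q)\neq 0$:
$$f^s(q)=f^c(q)\,f\big(f^c(q)^{-1}qf^c(q)\big)=f^c(q)\,f(T_f(q)).$$
On $\mathbb{B}\setminus\mathcal{Z}_{f^s}$ this rearranges to $f(T_f(q))=f^c(q)^{-1}f^s(q)\neq0$ and, inverting, to $f(T_f(q))^{-1}=f^s(q)^{-1}f^c(q)=f^{-\ast}(q)$, directly from the definition of the regular reciprocal. This already identifies the scalar factor in the claimed formula. To finish the first assertion I would apply the evaluation formula once more, now to $f^{-\ast}\ast g$ (legitimate since $f^{-\ast}(q)=f^s(q)^{-1}f^c(q)\neq0$): its value is $f^{-\ast}(q)\,g\big(f^{-\ast}(q)^{-1}qf^{-\ast}(q)\big)$, and the conjugating argument simplifies because $f^s(q)$ commutes with $q$, namely $f^{-\ast}(q)^{-1}qf^{-\ast}(q)=f^c(q)^{-1}f^s(q)\,q\,f^s(q)^{-1}f^c(q)=f^c(q)^{-1}qf^c(q)=T_f(q)$. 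Combining, $(f^{-\ast}\ast g)(q)=f(T_f(q))^{-1}g(T_f(q))$.

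For the inverse statement I would note $T_{f^c}(q)=f(q)^{-1}qf(q)$ (using $(f^c)^c=f$) and compute $T_{f^c}\circ T_f$ using the identity above. Substituting $f(T_f(q))^{-1}=f^s(q)^{-1}f^c(q)$ and $T_f(q)=f^c(q)^{-1}qf^c(q)$ into $T_{f^c}(T_f(q))=f(T_f(q))^{-1}\,T_f(q)\,f(T_f(q))$ collapses, after cancelling the $f^c(q)^{\pm1}$ factors, to $f^s(q)^{-1}qf^s(q)$, which equals $q$ by commutativity of $f^s(q)$ with $q$; the symmetric computation (interchanging the roles of $f$ and $f^c$, and using $(f^c)^s=f^s$) gives $T_f\circ T_{f^c}=\mathrm{id}$. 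Hence $T_f$ and $T_{f^c}$ are mutual inverses; since both maps are real-analytic on their open domains wherever the relevant symmetrizations do not vanish, $T_f$ is a diffeomorphism. The only genuinely delicate point, and the one I would be most careful about, is the bookkeeping of the exceptional sets: verifying that on $\mathbb{B}\setminus\mathcal{Z}_{f^s}$ every inverse written above exists and that $T_f$ does not leave this set, all of which reduces to the observations that $\mathcal{Z}_{f^s}$ is sphere-saturated and that $f^s(q)\neq0$ forces both $f^c(q)\neq0$ and $f(T_f(q))\neq0$.
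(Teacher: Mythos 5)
Your proof is correct, but note that the paper itself offers no proof of this proposition: it is stated in the preliminaries as a known result recalled from the literature (it is the regular-quotient/conjugation theorem of Gentili--Stoppato, see \cite{GSS,Co2}), so there is no internal argument to compare against. What your write-up supplies is a genuinely self-contained derivation, and it is the standard one: everything is reduced to the evaluation formula $(u\ast v)(q)=u(q)\,v\bigl(u(q)^{-1}qu(q)\bigr)$ (with $(u\ast v)(q)=0$ when $u(q)=0$), whose power-series verification you sketch correctly, since $v\bigl(u(q)^{-1}qu(q)\bigr)=\sum_m u(q)^{-1}q^m u(q)b_m$ makes $u(q)\,v\bigl(u(q)^{-1}qu(q)\bigr)=\sum_m q^m u(q)b_m$ collapse to the Cauchy product. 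All the supporting facts check out: the coefficients $\sum_{k}a_k\bar a_{n-k}$ of $f^s$ are real (by the reindexing $k\mapsto n-k$ after conjugation), hence $f^s(q)$ commutes with $q$ and $\mathcal{Z}_{f^s}$ is sphere-saturated, so $T_f$ preserves $\mathbb{B}\setminus\mathcal{Z}_{f^s}$; the identity $f^s(q)=f^c(q)f(T_f(q))$ yields $f(T_f(q))^{-1}=f^s(q)^{-1}f^c(q)=f^{-\ast}(q)$; and the two compositions $T_{f^c}\circ T_f$ and $T_f\circ T_{f^c}$ collapse to the identity exactly as you compute, using $(f^c)^c=f$ and $(f^c)^s=f^s$. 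The one point a referee would ask you to make explicit: you state the evaluation formula for functions slice regular on $\mathbb{B}$, but you then apply it with $u=f^{-\ast}$, which is regular only on $\mathbb{B}\setminus\mathcal{Z}_{f^s}$; this is harmless because that set is a circular (axially symmetric) slice domain --- precisely because $\mathcal{Z}_{f^s}$ is sphere-saturated and discrete in each slice --- and the formula, together with the very definition of the $\ast$-product for such domains, extends there, but as written your proof quietly crosses from the ball to this larger class of domains without saying so.
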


In one complex variable, the starlike   function  of order $\alpha\in [0,1]$ was first introduced by   Robertson \cite{Robertson}. Now we introduce its  corresponding  version for slice regular functions.
\begin{definition}\label{slice-starlike}
Let $\alpha <1$. We shall say that $f$ is a slice starlike function  of order  $\alpha$ in the unit ball $\mathbb{B}$ if the slice regular function $f\in \mathcal{S}$ satisfies
$${\rm{Re}}\,  \big( f(q)^{-1} qf'(q)\big) >\alpha, \quad q\in \mathbb{B}\setminus \{0\}. $$
Specially, we shall say that $f$ is a slice starlike function for $\alpha=0$.
\end{definition}
Let $\mathcal{S}^{*}({\alpha})$  denote the class of  slice starlike function of order  $\alpha$ and   write $\mathcal{S}^{*}$   instead when  ${\alpha}=0$.

With the concept of slice starlike functions, we can introduce
the concept  of   slice close-to-convex functions given  by
$$\mathcal{C}= \big\{f: \mathbb{B} \rightarrow \mathbb{H}:   \exists \ h\in \mathcal{S}^{*}   \ {\rm{ such \ that }}  \ {\rm{Re}}\,  \big( h(q)^{-1} qf'(q)\big) >0  \ {\rm{on}}\, \mathbb{B}   \big\}. $$

Note that the inclusion $\mathcal{S}^{*} \subset \mathcal{C}$ holds.

In one complex variable case, a  holomorphic function $f$ in
$\mathcal{C}$ is the so-called  close-to-convex function introduced by Kaplan. In \cite[Theorem 1]{Kaplan}, it was proven that  every close-to-convex function is injective which satisfies naturally the Bieberbach conjecture. In fact, this result was proved early  by Reade in  1955 (see e.g., \cite{Kythe,Reade}).

Note that we  have  omitted  the word ``left" for simplicity in the article. Certainly, the right slice regular  functions have completely analogous theories. This observation  shall be used in the proof of Theorem \ref{Rogosinski}.

 \section{Examples and Lemmas}
In this section, we shall offer some examples in the function class $\mathcal{C} $ and give some useful lemmas to  prove Theorems \ref{main-2} and \ref{Fekete-Szego-H}.  In addition, we establish a Rogosinski lemma for slice regular functions.
\begin{example}\label{main-example-1}
Let  $f(q)=q+\sum_{n=2}^{+\infty} q^{n}a_{n}$ be a  slice  regular  function  on $\mathbb{B}$   such  that $ \sum_{n=2}^{+\infty} n|a_{n}|\leq1$. Then    $f$ is injective on $\mathbb{B}$ and is in $\mathcal{S}^{*}$.
\end{example}
\begin{proof}
In fact, for any integer $n\geq2$,
 \begin{equation}\label{inequality-n}
 |q_{1}^{n}-q_{2}^{n}|<n |q_{1}-q_{2}|, \quad \forall \ q_{1}, q_{2}\in \mathbb{B}, q_{1}\neq q_{2}.
\end{equation}
For $n=2$, there holds that
$$q_{1}^{2}-q_{2}^{2}=q_{1}(q_{1}-q_{2}) +(q_{1}-q_{2} )q_{2}.$$
Then, for $ q_{1}, q_{2}\in \mathbb{B}$ with  $q_{1}\neq q_{2}$,
$$|q_{1}^{2}-q_{2}^{2}|\leq |q_{1}(q_{1}-q_{2}) |+|(q_{1}-q_{2} )q_{2}|
<2|q_{1}-q_{2}|.$$
By induction, we can readily obtain (\ref{inequality-n}) for the general case $n\geq2$.

By   the assumption of $f$ and   (\ref{inequality-n}), it follows that, for $ q_{1}, q_{2}\in \mathbb{B}$ with  $q_{1}\neq q_{2}$,
$$|f(q_{1})-f(q_{2})| \geq  |q_{1}-q_{2}|- \sum_{n=2}^{+\infty} | q_{1}^{n}-q_{2}^{n}| |a_{n}| >|q_{1}-q_{2}|\big(1- \sum_{n=2}^{+\infty}n |a_{n}|\big)\geq0,$$
which shows that $f$ is injective on $\mathbb{B}$.

Now let us show  $f\in \mathcal{S}^{*}$.  Under the assumption of $f$, we have
$$ \big|qf'(q)-f(q)\big| \leq \sum_{n=2}^{+\infty} |q|^{n}(n-1)|a_{n}|<|q|- \sum_{n=2}^{+\infty} |q|^{n}|a_{n}| \leq |f(q)| , \quad \forall \ q\in \mathbb{B}\setminus\{0\}.$$
Hence, $\mathcal{Z}_{f} = \{0\}$ and
$$\big|f(q)^{-1} qf'(q)-1\big|=\big|f(q)^{-1}\big|\big|qf'(q)-f(q)\big|<1, \quad \forall \ q\in \mathbb{B}\setminus\{0\},$$
which implies naturally that
${\rm{Re}}\,  \big( f(q)^{-1} qf'(q)\big) >0 $ on $\mathbb{B}$. Thus $f\in \mathcal{S}^{*}$, as desired.
\end{proof}

See   \cite{Goodman} for the complex case of Example  \ref{main-example-1}.
In order to introduce  other important examples, we need to recall   the concept of  starlike   function and a well-known analytical characterization of starlikeness  (see e.g., \cite[p. 36]{Graham-Kohr}).   Let $\Omega\subset \mathbb{R}^{n}$. The set $\Omega$ is called starlike with respect to the origin $0$, if $t\Omega\subset\Omega$ for all $t \in[0,1]$.  
Let $F$ be holomorphic on $\mathbb{D}$. We say that $F$ is a starlike   function  on $\mathbb{D}$ with respect to $0$ if $F$ is injective  on $\mathbb{D}$  and the image $F (\mathbb{D})$ is   starlike  with respect to $0$.
\begin{theorem}\label{starlikeness}
Let $ F: \mathbb{D}\rightarrow \mathbb{C}$ be a holomorphic function with $F(0)= 0$ and $F '(0) = 1$.
Then $F$ is starlike with respect to $0$ if and only if   ${\rm{Re}}\,    \frac{zF'(z)}{F(z)} >0$ on $\mathbb{D}$.
\end{theorem}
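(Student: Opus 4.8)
The plan is to exploit the fundamental identity linking the quantity ${\rm{Re}}\,\frac{zF'(z)}{F(z)}$ to the rate of change of the argument of $F$ along circles centred at the origin. Fixing $r\in(0,1)$ and writing $z=re^{i\theta}$, and using that $F(z)\neq 0$ for $z\neq 0$ (this is automatic from injectivity in the forward direction and must be extracted from the hypothesis in the converse), one differentiates $\log F(re^{i\theta})$ in $\theta$. Since $\frac{\partial z}{\partial\theta}=iz$, the chain rule gives
$$\frac{\partial}{\partial\theta}\log F(re^{i\theta}) = i\,\frac{zF'(z)}{F(z)},$$
and taking imaginary parts yields the key identity
$$\frac{\partial}{\partial\theta}\arg F(re^{i\theta}) = {\rm{Re}}\,\frac{zF'(z)}{F(z)}.$$
This reduces the analytic condition to the geometric statement that along each circle $|z|=r$ the argument of the image point $F(re^{i\theta})$ is monotone in $\theta$, which is exactly the boundary description of starlikeness.

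For the forward implication, I would assume $F$ is starlike. Since $F$ is injective with $F(0)=0$, the image $F(\mathbb{D})$ is a starlike Jordan domain, so each image curve $\theta\mapsto F(re^{i\theta})$ winds monotonically around $0$; geometrically this forces $\frac{\partial}{\partial\theta}\arg F\geq 0$, hence ${\rm{Re}}\,\frac{zF'(z)}{F(z)}\geq 0$. The quotient $p(z):=\frac{zF'(z)}{F(z)}$ extends holomorphically across the origin with $p(0)=1$, because $F(z)=z+\cdots$. Thus ${\rm{Re}}\,p$ is harmonic, nonnegative, and equal to $1$ at $z=0$; by the minimum principle for harmonic functions it cannot attain the value $0$ in the interior without being identically zero, so strict positivity ${\rm{Re}}\,p>0$ holds throughout $\mathbb{D}$.

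For the converse, I would set $p(z)=\frac{zF'(z)}{F(z)}$ and assume ${\rm{Re}}\,p>0$. First I would check that $F$ has no zeros away from the origin (otherwise $p$ acquires a pole there, contradicting positivity of its real part), so $p$ is holomorphic with $p(0)=1$. The key identity then gives $\frac{\partial}{\partial\theta}\arg F(re^{i\theta})>0$ for every $\theta$, so the argument is strictly increasing along each circle. Integrating over one full revolution and invoking the argument principle, together with the fact that $F$ has exactly one (simple) zero inside $|z|=r$, shows that the total increment of $\arg F$ equals $2\pi$. Hence $\theta\mapsto F(re^{i\theta})$ is a simple closed curve traversed once with strictly monotone argument, which is precisely the condition that its interior is starlike with respect to $0$. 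Letting $r\to 1$ and using that an increasing union of starlike sets is starlike shows $F(\mathbb{D})$ is starlike, while injectivity of $F$ follows from the one-to-one behaviour on each circle combined with the single-zero count.

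The main obstacle will be the converse direction: converting the pointwise monotonicity of the argument into global injectivity and starlikeness of the image requires care, since one must simultaneously control the radial and angular behaviour and justify passing from the starlikeness of each sub-image $F(\{|z|<r\})$ to that of the full image $F(\mathbb{D})$. By contrast, the forward direction is comparatively routine once the key identity and the minimum principle are in hand.
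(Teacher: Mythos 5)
First, a point of comparison: the paper does not prove Theorem \ref{starlikeness} at all; it recalls it as a known classical result, citing \cite[p.~36]{Graham-Kohr}, and the only fragment of argument it records is the sketch, in the remark following Corollary \ref{main-corollary}, of exactly the converse direction you give (the identity ${\rm{Re}}\,\big(zF'(z)/F(z)\big)=\frac{\partial}{\partial\theta}\arg F(re^{i\theta})$, monotonicity of the argument on each circle, univalence from the boundary, then letting $r\to1$). That half of your proposal is sound: excluding zeros of $F$ via the pole that $zF'/F$ would acquire, computing the total argument increment $2\pi$ by the argument principle, deducing injectivity on and then inside each circle, and the increasing-union-of-starlike-sets step are the standard classical ingredients.

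The forward direction, however, has a genuine gap. You claim that starlikeness of the full image $F(\mathbb{D})$ ``geometrically forces'' the argument of $\theta\mapsto F(re^{i\theta})$ to be monotone on every circle $|z|=r$. This is not a geometric triviality: given injectivity, monotone winding of the image of the circle $|z|=r$ is equivalent to starlikeness of the subimage $F(\{|z|<r\})$, i.e.\ to the \emph{hereditary property} of starlike maps, and that property is a theorem whose standard proof is itself deduced from the analytic characterization you are trying to prove (one first establishes ${\rm{Re}}\,\big(zF'(z)/F(z)\big)>0$ on $\mathbb{D}$, then applies the converse direction on each subdisk). So, as written, your forward implication is circular: starlikeness of $F(\mathbb{D})$ only says that each segment $[0,w]$ with $w\in F(\mathbb{D})$ lies in $F(\mathbb{D})$; it says nothing directly about the geometry of $F(\{|z|<r\})$ or of the curves $F(re^{i\theta})$. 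The standard repair is a Schwarz-lemma argument: for $t\in(0,1)$, starlikeness and injectivity make $w_t(z)=F^{-1}\big(tF(z)\big)$ a well-defined holomorphic self-map of $\mathbb{D}$ with $w_t(0)=0$, so $|w_t(z)|\le|z|$; hence ${\rm{Re}}\,\big((1-w_t(z)/z)/(1-t)\big)\ge0$, and letting $t\to1^-$ gives ${\rm{Re}}\,\big(F(z)/(zF'(z))\big)\ge0$, equivalently ${\rm{Re}}\,\big(zF'(z)/F(z)\big)\ge0$; strict inequality then follows from the minimum principle for harmonic functions exactly as in your final step.
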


We  recall also a so-called convex combination identity for   slice regular functions which was used to establish   sharp growth and distortion theorems for a subclass of slice regular functions (see \cite{Ren-Wang} or \cite{Ren-Wang-Xu}).
\begin{lemma}\label{eq:norm}
Let $f$ be a slice regular function on   $\mathbb{B}$ such that $f(\mathbb{B}_I)\subseteq \mathbb C_I $ for some $I\in \mathbb S$.  Then
for every $ re^{J\theta}\in \mathbb{B}$ with $J\in \mathbb{S}$,
\begin{equation}\label{convex-combination-identity}
\big|f(re^{J\theta})\big|^2 =\frac{1+\langle I,J\rangle}{2}\big|f(re^{I\theta})\big|^2+
\frac{1-\langle I,J\rangle}{2}\big|f(re^{-I\theta})\big|^2,
\end{equation}
where $\langle I,J\rangle=-{\rm{Re}}\,(IJ)\in [-1,1]$.
\end{lemma}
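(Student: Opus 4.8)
The plan is to reduce the identity to the \emph{representation formula} for slice regular functions (see \cite{GSS,Co2}), which recovers the values of $f$ on an arbitrary slice from its values on the fixed slice $\mathbb{C}_I$. Writing $q=re^{J\theta}=x+yJ$ with $x=r\cos\theta$, $y=r\sin\theta$, so that $re^{I\theta}=x+yI$ and $re^{-I\theta}=x-yI$, the representation formula reads
$$f(x+yJ)=\frac{1}{2}\big(f(x+yI)+f(x-yI)\big)+\frac{JI}{2}\big(f(x-yI)-f(x+yI)\big).$$
The hypothesis $f(\mathbb{B}_I)\subseteq\mathbb{C}_I$ enters here in an essential way: since $x\pm yI\in\mathbb{B}_I$, both $a:=f(re^{I\theta})$ and $b:=f(re^{-I\theta})$ lie in $\mathbb{C}_I$. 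Hence $u:=\frac{1}{2}(a+b)$ and $v:=\frac{1}{2}(b-a)$ also lie in $\mathbb{C}_I$, so they commute with $I$ and with each other, and the formula becomes $f(x+yJ)=u+JIv$.

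Next I would expand $\big|f(x+yJ)\big|^2=(u+JIv)\,\overline{(u+JIv)}$. Using $\bar I=-I$, $\bar J=-J$ one gets $\overline{JIv}=\bar v\,IJ$, so the two diagonal terms are $u\bar u=|u|^2$ and $JIv\bar v\,IJ=|v|^2\,JI^2J=|v|^2$ (invoking $I^2=J^2=-1$). The heart of the argument is the two cross terms $u\bar v\,IJ+JIv\bar u$. Setting $w:=u\bar v\in\mathbb{C}_I$ and noting $v\bar u=\bar w$, these equal $wIJ+JI\bar w$; writing $w=w_0+w_1I$ with $w_0,w_1\in\mathbb{R}$, the contributions proportional to $J$ coming from $w_1$ cancel against each other, leaving $w_0(IJ+JI)=-2\langle I,J\rangle\,\mathrm{Re}(u\bar v)$, where $IJ+JI=-2\langle I,J\rangle$ is the defining scalar relation. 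Thus $|f(x+yJ)|^2=|u|^2+|v|^2-2\langle I,J\rangle\,\mathrm{Re}(u\bar v)$.

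It then remains only to substitute $|u|^2+|v|^2=\frac{1}{2}(|a|^2+|b|^2)$ and $\mathrm{Re}(u\bar v)=\frac{1}{4}(|b|^2-|a|^2)$ — the latter because the real parts of $a\bar b$ and $b\bar a$ coincide and cancel — and to collect terms, which produces the weights $\frac{1\pm\langle I,J\rangle}{2}$ in front of $|a|^2$ and $|b|^2$. The main obstacle is precisely the non-commutativity: one must verify that the purely $I$-imaginary part of the cross terms cancels, so that only the real scalar $\langle I,J\rangle=-\mathrm{Re}(IJ)$ survives. Finally, since $\langle I,J\rangle$ is the Euclidean inner product of $I,J$ viewed as unit vectors on $\mathbb{S}$, one has $|\langle I,J\rangle|\le 1$ with equality exactly when $J=\pm I$ — in which case $re^{J\theta}$ already lies on $\mathbb{C}_I$ and the identity is trivial — which explains the stated range $\langle I,J\rangle\in(-1,1)$.
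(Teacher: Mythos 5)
Your proposal is correct and complete. The paper itself states this lemma without proof, citing \cite{Ren-Wang,Ren-Wang-Xu}, and your argument via the representation formula is exactly the standard one behind those references: the reduction $f(x+yJ)=u+JIv$ with $u,v\in\mathbb{C}_I$, the conjugation $\overline{JIv}=\bar v IJ$, the cancellation of the $I$-imaginary part of the cross terms leaving $w_0(IJ+JI)=-2\langle I,J\rangle\,\mathrm{Re}(u\bar v)$, and the parallelogram-type identities for $|u|^2+|v|^2$ and $\mathrm{Re}(u\bar v)$ all check out and assemble into the stated convex combination.
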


From Theorem \ref{starlikeness}, Lemmas \ref{eq:norm} and \ref{main-lemma} below, one can show that
\begin{example}\label{main-example-2}
Fix $I\in \mathbb{S}$. Let  $F(z)=z+\sum_{n=2}^{+\infty} z^{n}a_{n},  a_{n}\in \mathbb{C}_{I}$ for $n= 2,3,4\ldots$  be a starlike  function  on $\mathbb{B}_{I}\simeq \mathbb{D}$.  Then $f(q)=q+\sum_{n=2}^{+\infty} q^{n}a_{n}\in \mathcal{S}^{*}.$
\end{example}

\begin{example}\label{main-example-3}
 Let   $f \in \mathcal{S}$ be such that
 \begin{eqnarray}\label{condition-C}
 {\rm{Re}}\, \big( \big(f(q)-f(-q)\big)^{-1} qf'(q)\big) >0, \quad \forall \ q\in \mathbb{B} .
\end{eqnarray}
 Then $f\in  \mathcal{C}.$
\end{example}
\begin{proof}
From (\ref{Re-log-m}) below, it follows  that
 $$ r\frac{\partial}{\partial r}\log |f(q)-f(-q)| ={\rm{Re}}\,  \Big( \big(f(q)-f(-q)\big)^{-1} q\big(f'(q)+f'(-q)\big) \Big), $$
for  $q=ru  \in \mathbb{B}\setminus\{0\}$ with $r=|q|$.

Then  condition  (\ref{condition-C}) and Lemma \ref{main-lemma} imply that $(f(q)-f(-q))/2\in \mathcal{S}^{*} $, thus $f\in \mathcal{C}$ by definition.
\end{proof}

To show   Lemma  \ref{main-lemma}, we prove the following useful result  which reveals the close relation between the usual and   regular multiplication.
\begin{proposition}\label{main-proposition}
Let  $f,g $ be two  slice  regular  functions  on $\mathbb{B}$   such  that  $\mathcal{Z}_{f} = \varnothing$. Then we have
\begin{eqnarray}\label{eq:re-inverse}
 {\rm{Re}}\,  \big( f(q)^{-1} g(q)\big) >0 \ {\rm{on}} \,  \  \mathbb{B}
 \Leftrightarrow  {\rm{Re}}\,  \big(  f(q)^{- \ast} \ast   g(q)\big) >0 \  {\rm{on}} \ \mathbb{B},
\end{eqnarray}
 and\begin{eqnarray}\label{modulus-inverse}
|g(q)|<|f(q)|   \ {\rm{on}}\  \mathbb{B}
 \Leftrightarrow  |f(q)^{- \ast} \ast   g(q)| <1 \  {\rm{on}} \ \mathbb{B}.
\end{eqnarray}
  \end{proposition}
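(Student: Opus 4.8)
The plan is to reduce both equivalences to the pointwise identity supplied by Proposition~\ref{prop:Quotient Relation}, which converts the $\ast$-product $f^{-\ast}\ast g$ into an ordinary pointwise quotient evaluated at the transformed point $T_f(q)$. Concretely, for every $q\in \mathbb{B}\setminus \mathcal{Z}_{f^s}$ one has
$$f^{-\ast}\ast g(q)=f(T_f(q))^{-1}g(T_f(q)),$$
so the expressions $f(q)^{-1}g(q)$ and $f^{-\ast}\ast g(q)$ differ only by the change of variable $q\mapsto T_f(q)$. Since both statements --- a sign condition on the real part and a modulus bound --- are invariant under this change of variable once we know that $T_f$ is a bijection of $\mathbb{B}$, the two equivalences should follow at once.

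First I would verify that the hypothesis $\mathcal{Z}_f=\varnothing$ upgrades to $\mathcal{Z}_{f^s}=\varnothing$, so that $T_f$ is defined and is a diffeomorphism on all of $\mathbb{B}$, not merely on $\mathbb{B}\setminus\mathcal{Z}_{f^s}$. This rests on the structure theory for zeros of the symmetrization (cf. \cite{GSS}): the function $f^s$ vanishes on a sphere $[q]=x+y\mathbb{S}$ precisely when $f$ has a zero on $[q]$, and $\mathcal{Z}_{f^s}$ is a union of such spheres together with the isolated real zeros of $f$. Hence $\mathcal{Z}_f=\varnothing$ forces $\mathcal{Z}_{f^s}=\varnothing$, and by Proposition~\ref{prop:Quotient Relation} the map $T_f:\mathbb{B}\rightarrow\mathbb{B}$, $T_f(q)=f^c(q)^{-1}qf^c(q)$, is a diffeomorphism, in particular a bijection of $\mathbb{B}$ onto itself.

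With this in hand, for the first equivalence I would take real parts in the quotient identity to obtain ${\rm{Re}}\,(f^{-\ast}\ast g(q))={\rm{Re}}\,(f(T_f(q))^{-1}g(T_f(q)))$ for all $q\in\mathbb{B}$. Because $T_f$ is a bijection of $\mathbb{B}$, as $q$ runs over $\mathbb{B}$ so does $w:=T_f(q)$; thus ${\rm{Re}}\,(f^{-\ast}\ast g)$ is positive on $\mathbb{B}$ if and only if ${\rm{Re}}\,(f^{-1}g)$ is positive on $\mathbb{B}$, proving \eqref{eq:re-inverse}. For the second equivalence I would instead take moduli, using $|f(T_f(q))^{-1}g(T_f(q))|=|g(T_f(q))|/|f(T_f(q))|$, and apply the same surjectivity of $T_f$ to conclude that $|f^{-\ast}\ast g|<1$ on $\mathbb{B}$ if and only if $|g|<|f|$ on $\mathbb{B}$, giving \eqref{modulus-inverse}.

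The main obstacle is the preliminary bookkeeping on zero sets: one must be sure that $T_f$ really is a self-bijection of the \emph{whole} ball, since the quotient identity in Proposition~\ref{prop:Quotient Relation} is asserted only away from $\mathcal{Z}_{f^s}$, and it is the surjectivity of $T_f$ (every $w\in\mathbb{B}$ arises as $T_f(q)$ for some $q$) that makes both directions of each ``if and only if'' go through. Once $\mathcal{Z}_{f^s}=\varnothing$ is secured, the remaining manipulations are purely formal and the noncommutativity of $\mathbb{H}$ causes no difficulty, as taking real parts and moduli commutes with the conjugation by $f^c(q)$ implicit in $T_f$.
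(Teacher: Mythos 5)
Your proposal is correct and follows essentially the same route as the paper: both rest on the pointwise identity $f^{-\ast}\ast g(q)=f(T_f(q))^{-1}g(T_f(q))$ from Proposition~\ref{prop:Quotient Relation} together with the fact that $T_f$ is a self-bijection of $\mathbb{B}$ (the paper invokes $T_f\circ T_{f^c}=\mathrm{id}$ for the converse direction, which is exactly your surjectivity step). Your extra care in checking $\mathcal{Z}_{f^s}=\varnothing$ is a reasonable elaboration of the paper's brief remark that $\mathcal{Z}_f=\varnothing$ if and only if $\mathcal{Z}_{f^c}=\varnothing$.
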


\begin{proof}
We only prove the first claim (\ref{eq:re-inverse}). The second can be obtained by the same strategy.
Note that $\mathcal{Z}_{f} = \varnothing$ if and only if   $\mathcal{Z}_{f^c} = \varnothing$ for any  slice  regular  function   on $\mathbb{B}$.   Assume that ${\rm{Re}}\,  \big( f(q)^{-1} g(q)\big) >0$ on $ \mathbb{B}$.
From  the assumption on the functions $f$ and $g$, we have that, by Proposition \ref{prop:Quotient Relation},
\begin{eqnarray}\label{verse-relation}
 f^{-\ast}\ast g(q)=f(T_f(q))^{-1}g(T_f(q)), \quad \forall\ q\in \mathbb{B},
\end{eqnarray}
where  $T_f(q)=f^c(q)^{-1}qf^c(q)\in \mathbb{B}$.
This implies  that  ${\rm{Re}}\,  \big(  f(q)^{- \ast} \ast   g(q)\big) >0 \  $ on $ \mathbb{B}.$

Conversely,  the condition ${\rm{Re}}\,  \big(  f(q)^{- \ast} \ast   g(q)\big) >0 \  $ on $ \mathbb{B}$ implies that ${\rm{Re}}\,  \big(  f(T_f(q))^{-1}g(T_f(q))\big) >0 \  $ on $ \mathbb{B}$ by (\ref{verse-relation}).
Then the desired result  follows from the fact that $T_f\circ T_{f^{c}} (q)=q$ for all $q\in \mathbb{B}$.
\end{proof}

\begin{lemma}\label{main-lemma}
Let  $f $ be a  slice  regular  function  on $\mathbb{B}$   such  that  $\mathcal{Z}_{f} = \{0\}$ and $f '(0) = 1 $. Then the following statements are equivalent:
\begin{enumerate}
\item  ${\rm{Re}}\,  \big( f(q)^{-1} qf'(q)\big) >\alpha $ on $\mathbb{B}\setminus\{0\}$;
\item  ${\rm{Re}}\,  \big(  f(q)^{- \ast} \ast   (qf'(q))\big) >\alpha $ on $\mathbb{B}\setminus\{0\}$;
\item  For any $u\in \partial\mathbb{B}$, the function $M(r)=|f(ru)|/r^{\alpha}$ is strictly increasing on $(0,1)$.
\end{enumerate}
\end{lemma}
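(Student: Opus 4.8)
The plan is to separate the purely algebraic equivalence (1)$\Leftrightarrow$(2) from the real-analytic content that ties both to the monotonicity in (3). For (1)$\Leftrightarrow$(2) I would reduce to Proposition \ref{main-proposition} by absorbing the shift by $\alpha$. Setting $g(q):=qf'(q)-\alpha f(q)$, the pointwise identity $f(q)^{-1}f(q)=1$ gives ${\rm{Re}}\,(f^{-1}qf')>\alpha\Leftrightarrow{\rm{Re}}\,(f^{-1}g)>0$, while $f^{-\ast}\ast f\equiv 1$ (and the fact that $\alpha\in\Real$ commutes through the $\ast$-product) gives ${\rm{Re}}\,(f^{-\ast}\ast(qf'))>\alpha\Leftrightarrow{\rm{Re}}\,(f^{-\ast}\ast g)>0$. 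Applying Proposition \ref{main-proposition} to the pair $(f,g)$, whose hypotheses hold since $\mathcal{Z}_f=\{0\}$ is disjoint from $\mathbb{B}\setminus\{0\}$, then yields (1)$\Leftrightarrow$(2) on $\mathbb{B}\setminus\{0\}$.

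The bridge to (3) is the radial logarithmic-derivative identity
$$r\frac{\partial}{\partial r}\log|f(ru)|={\rm{Re}}\,\big(f(q)^{-1}qf'(q)\big),\qquad q=ru,\ u\in\partial\mathbb{B},$$
which is the computation underlying (\ref{Re-log-m}). To establish it I would fix $I\in\mathbb{S}$ with $u\in\mathbb{C}_I$ and use the Cauchy--Riemann relation on the slice $\mathbb{B}_I$ to get $\partial_r f=uf'(q)$ along the ray, hence $qf'(q)=r\,\partial_r f$; differentiating $\log|f|^2=\log(f\bar f)$ and using ${\rm{Re}}\,(\bar f\cdot qf')={\rm{Re}}\,(qf'\bar f)$ together with $f^{-1}=\bar f/|f|^2$ then produces the identity. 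Granting it, put $M(r)=|f(ru)|/r^{\alpha}$, so that $r\,\partial_r\log M(r)={\rm{Re}}\,(f^{-1}qf')-\alpha$. Now (1)$\Rightarrow$(3) is immediate, since the right-hand side is positive and $M>0$ forces $M$ strictly increasing for every $u$. Conversely, monotonicity of $M$ only delivers $\partial_r\log M\ge 0$, i.e. the \emph{nonstrict} inequality ${\rm{Re}}\,(f^{-1}qf')\ge\alpha$.

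The main obstacle is upgrading this ``$\ge$'' to the strict inequality of (1). Since the pointwise quotient $f^{-1}qf'$ is not slice regular, I would pass to the regular quotient $g:=f^{-\ast}\ast(qf')$; because $\mathcal{Z}_f=\{0\}$ forces $f^s$ to vanish only, to second order, at the origin, while $qf'(q)=q+O(q^2)$ cancels the resulting pole, $g$ is genuinely slice regular on all of $\mathbb{B}$ with $g(0)=1$. The nonstrict form of Proposition \ref{main-proposition} holds by the same proof, since it rests only on $T_f$ being a bijection of $\mathbb{B}$ and is therefore insensitive to replacing $>$ by $\ge$; it converts ${\rm{Re}}\,(f^{-1}qf')\ge\alpha$ into ${\rm{Re}}\,g\ge\alpha$ on $\mathbb{B}$. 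Finally, the splitting lemma shows ${\rm{Re}}\,g$ is harmonic on each slice $\mathbb{B}_I$, so if ${\rm{Re}}\,g(q_0)=\alpha$ at some $q_0\neq 0$, the minimum principle would force ${\rm{Re}}\,g\equiv\alpha$ on the disk $\mathbb{B}_I\ni 0$, contradicting ${\rm{Re}}\,g(0)=1>\alpha$. Hence ${\rm{Re}}\,g>\alpha$ strictly, which is (2) and therefore (1). This minimum-principle step, which crucially exploits the regularity of $g$ at the origin, is precisely what the $\ast$-product formulation (2) is designed to make available, and I expect it to be the delicate point of the argument.
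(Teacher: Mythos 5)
Your proposal is correct and follows essentially the same route as the paper: the radial identity $r\partial_r\log|f(ru)|={\rm{Re}}\,(f^{-1}qf')$ for the bridge between (1) and (3), and an upgrade of the nonstrict inequality to a strict one by passing to the regular quotient $h=f^{-\ast}\ast(qf')$ (with $h(0)=1$) and invoking the maximum/minimum principle for the real part of a slice regular function. Your reduction of (1)$\Leftrightarrow$(2) via the shift $g=qf'-\alpha f$ is a slightly more careful justification of the step the paper dismisses as trivial, but it is the same underlying appeal to Proposition \ref{main-proposition}.
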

\begin{proof}
(1) $ \Leftrightarrow$ (2): It is trivial by Proposition \ref{main-proposition}.

(1)$  \Rightarrow $ (3):
Notice that, for any slice regular function $f(q)$ on $\mathbb{B}$ with $q=ru=|q|u \in \mathbb{B}$, there holds that
$$r\frac{\partial}{\partial r} f(q)= qf'(q).$$
Then, for $q \in \mathbb{B} \setminus\{0\}$,
\begin{eqnarray}\label{Re-log-m}
  r\frac{\partial}{\partial r}\log |f(q)|=\frac{r}{|f(q)|^{2}}{\rm{Re}}\,  \big( \frac{\partial}{\partial r} f(q)\overline{f(q)} \big) ={\rm{Re}}\,  \big( f(q)^{-1} qf'(q)\big)>\alpha.
\end{eqnarray}
Integrating  the above inequality  from  $r_{1}$ to $r_{2}$ with $0<r_{1} <r_{2}<1$ gives that
 $$ \frac{|f(r_{2}u)| }{ |f(r_{1}u)|}>\frac{ r_{2}^{\alpha}  }{  r_{1} ^{\alpha}},$$
which implies  statement (3).

(3)$  \Rightarrow $ (1): Fix  $u\in  \partial\mathbb{B}$.
Under the condition of (3), we have, for $q=ru, r\in (0,1),$
$$ \frac{\partial}{\partial r} \frac{|f(ru)|}{r^{\alpha}} \geq 0,$$
which implies
$$r\frac{\partial}{\partial r}\log |f(ru)| \geq\alpha.$$
This together with (\ref{Re-log-m}) shows that ${\rm{Re}}\,  \big( f(q)^{-1} qf'(q)\big) \geq \alpha $ for all $r\in (0,1)$.
 Since $u\in  \partial\mathbb{B}$ is arbitrary,  it  follows that
 \begin{eqnarray}\label{condition-quality}
  {\rm{Re}}\,  \big( f(q)^{-1} qf'(q)\big)  \geq \alpha, \quad  \forall \ q\in     \mathbb{B}\setminus\{0\}.
\end{eqnarray}
Let us show that inequality (\ref{condition-quality}) is strict.
Consider  the slice regular function
\begin{eqnarray*}
h(q)=
\left\{
\begin{array}{lll}
  f(q) ^{-\ast} \ast (qf'(q))     &\mathrm {if} \ q\in \mathbb{B}\setminus\{0\},
\\
1,   &\mathrm {if} \ q=0.
\end{array}
\right.
\end{eqnarray*}
By  Proposition \ref{main-proposition}, (\ref{condition-quality}) implies that
$${\rm{Re}}\,  h(q)  \geq \alpha, \quad  \forall \ q\in     \mathbb{B}.$$
If equality occurs in (\ref{condition-quality})  for some $q_{0} \in     \mathbb{B}$, then ${\rm{Re}}\,  h(\widetilde{q_{0}})=\alpha $ for     $\widetilde{q_{0} }=f(q_{0})^{-1} q_{0} f(q_{0}) \in     \mathbb{B}.$
By applying  the maximum principle (see \cite[Theorem 7.13]{GSS}) for the real part
of a slice regular function to $-h$, we see that   $h$ is constant on $\mathbb{B}$.
Whence   $f(q) =qf'(q) $ on $\mathbb{B}$, which forces that  $f$ is an identity on $\mathbb{B}$. Thus, for $\alpha<1$, (\ref{condition-quality}) is strict.

Hence  statements (1) and (3) are equivalent.
\end{proof}

Note that $\mathcal{S}^{*}({\alpha})=\emptyset$ if $\alpha \geq 1$ from Lemma \ref{main-lemma}.

As a byproduct,   by Theorem \ref{starlikeness} and Lemma \ref{main-lemma},  we obtain a   characterization for starlike functions in one complex variable case.
\begin{corollary}\label{main-corollary}
Let  $F$ be   holomorphic   on $\mathbb{D}$   such  that  $F(0)=0$ and $F'(0) = 1$. Then the following statements are equivalent.
\begin{enumerate}
\item   $F$ is starlike   with respect to $0$;
\item  ${\rm{Re}}\,  \big(  F(z)^{-1} zF'(z))\big) >0 $ on $\mathbb{D}$;
\item  for any $\theta \in [0,2\pi) $, the function $M(r)=|F(re^{i\theta})|$ is strictly increasing on $[0,1)$.
\end{enumerate}
\end{corollary}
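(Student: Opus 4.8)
The plan is to obtain all three equivalences by specializing the two results already at our disposal, Theorem \ref{starlikeness} and Lemma \ref{main-lemma}, to the commutative setting $\mathbb{C}$, which is the single-slice instance of the quaternionic theory (replacing $\mathbb{H}$ by $\mathbb{C}$ and $\mathbb{B}$ by $\mathbb{D}$). The only computation underlying everything is the observation that in the commutative field $\mathbb{C}$ the noncommutative object $F(z)^{-1}zF'(z)$ is nothing but the ordinary quotient $zF'(z)/F(z)$, and that the $\ast$-product and the regular reciprocal collapse to their pointwise counterparts (so $F^{-\ast}\ast(zF') = F^{-1}zF' = zF'/F$). Thus no genuinely new analysis is required; the work is to match up the three formulations.

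For the implication scheme (1) $\Leftrightarrow$ (2) I would simply invoke Theorem \ref{starlikeness}. Since a starlike function is univalent and $F(0)=0$, its only zero in $\mathbb{D}$ is the origin, so $F(z)^{-1}$ is defined on $\mathbb{D}\setminus\{0\}$; there $F(z)^{-1}zF'(z)=zF'(z)/F(z)$, and the condition ${\rm Re}\,(F(z)^{-1}zF'(z))>0$ in item (2) is literally the condition ${\rm Re}\,(zF'(z)/F(z))>0$ of Theorem \ref{starlikeness}. Hence (1) and (2) are the same assertion. (At $z=0$ the quotient extends continuously to $F'(0)=1$, whose real part is positive, so no issue arises at the origin.)

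For (2) $\Leftrightarrow$ (3) I would apply Lemma \ref{main-lemma} with $\alpha=0$ to $F$, viewed on a single slice $\mathbb{C}_I\simeq\mathbb{C}$ with $\mathbb{B}_I\simeq\mathbb{D}$. The hypotheses $F'(0)=1$ and $\mathcal{Z}_F=\{0\}$ are met: $F'(0)=1$ is assumed, while $\mathcal{Z}_F=\{0\}$ is supplied by whichever of (2), (3) is in force (item (2) presupposes $F$ is zero-free off the origin so that $F^{-1}$ exists; item (3) forces $|F(re^{i\theta})|>|F(0)|=0$ for $r>0$ by strict monotonicity). With $\alpha=0$ and $u=e^{i\theta}\in\partial\mathbb{D}$, item (1) of Lemma \ref{main-lemma} reads ${\rm Re}\,(F(z)^{-1}zF'(z))>0$, which is exactly item (2) of the corollary, and item (3) of Lemma \ref{main-lemma} reads that $M(r)=|F(re^{i\theta})|$ is strictly increasing on $(0,1)$. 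The equivalence (1) $\Leftrightarrow$ (3) of Lemma \ref{main-lemma} then gives (2) $\Leftrightarrow$ (3). Reconciling the intervals $(0,1)$ and $[0,1)$ is routine: since $M(0)=0$ and $M\ge 0$, strict monotonicity on $(0,1)$ together with continuity forces $M(r)>0=M(0)$ for every $r>0$, so $M$ is strictly increasing on all of $[0,1)$.

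The one point requiring care, rather than a true obstacle, is the legitimacy of passing from the noncommutative Lemma \ref{main-lemma} to the commutative corollary: one must confirm that restricting to a single slice really does turn the $\ast$-operations into ordinary ones, so that item (2) of the corollary coincides with item (1) of the lemma, and that the zero-set hypothesis $\mathcal{Z}_F=\{0\}$ is available in each direction. Both are settled by the elementary remarks above; alternatively, one can sidestep the identification entirely, since the proof of Lemma \ref{main-lemma} rests only on the logarithmic-derivative identity (\ref{Re-log-m}) and the maximum principle for harmonic functions, both of which hold verbatim for a holomorphic $F$ on $\mathbb{D}$.
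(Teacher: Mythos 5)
Your proposal is correct and follows exactly the route the paper intends: the paper derives this corollary "by Theorem \ref{starlikeness} and Lemma \ref{main-lemma}" with no further detail, and your write-up simply fills in the same specialization to the commutative slice, including the (correct) observations about the zero-set hypothesis and the behaviour at the origin.
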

\begin{remark}
 Note that    condition $(2)$ in Corollary  \ref{main-corollary} would imply  that $F$ is injective on $\mathbb{D}$.  From  the   formula
$${\rm{Re}}\,  \big(  \frac{ zF'(z)}{F(z)}\big)=\frac{\partial}{\partial \theta} \arg F(re^{i\theta}),
\quad z= re^{i\theta},$$
it follows that the argument of $F(re^{i\theta})$ increases monotonically for $\theta$. Thus   $F$ is injective on $|z|=r<1$.  By  the principle of univalence on the boundary, we can conclude that $F$ is injective on $|z| \leq r$. Since $r<1$   is arbitrary, $F$ is injective on $\mathbb{D}$  (see e.g., \cite[p. 38]{Graham-Kohr}). However, this approach is not suitable for the quaternionic setting.

\end{remark}

 Now we  state without proof  a variant of the   Schwarz lemma for slice regular functions in \cite[Theorem 4.1]{GS2} which is useful in the sequel.
\begin{theorem}\label{generalized-Schwarz-lemma}
If $f:\mathbb{B}\rightarrow \mathbb{B}$ be   slice  regular   such
that  $f(0)=f'(0)= \ldots=f^{(m-1)}(0)=0 \quad (m\geq1)$.  Then
$$|f(q)|\leq |q|^{m},\qquad\forall\ q\in \mathbb{B},$$
and
$$|f^{(m)}(0)|\leq m!.$$
Both inequalities are strict (except $q=0$) unless $f(q)=q^{m}u$ for some $u\in \partial\mathbb{B}$.
\end{theorem}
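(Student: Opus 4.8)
The plan is to reduce the statement to a scalar maximum modulus principle by factoring out the prescribed vanishing order. Writing the power series $f(q)=\sum_{n=0}^{\infty}q^{n}a_{n}$, the hypotheses $f(0)=f'(0)=\cdots=f^{(m-1)}(0)=0$ force $a_{0}=\cdots=a_{m-1}=0$, so that $f(q)=q^{m}h(q)$ where $h(q)=\sum_{k=0}^{\infty}q^{k}a_{k+m}$ is again slice regular on $\mathbb{B}$. Since $q^{m}$ has real coefficients, this ordinary product coincides with the $\ast$-product $q^{m}\ast h$. Because the quaternionic modulus is multiplicative, $|f(q)|=|q^{m}||h(q)|=|q|^{m}|h(q)|$, and moreover $h(0)=a_{m}=f^{(m)}(0)/m!$. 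Both assertions of the theorem therefore follow once I show that $|h(q)|\leq 1$ on $\mathbb{B}$.

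First I would establish this bound. Fix $q_{0}\in\mathbb{B}$ and choose $r$ with $|q_{0}|<r<1$. On the sphere $|q|=r$ one has $|h(q)|=|f(q)|/r^{m}\leq 1/r^{m}$, since $f$ maps $\mathbb{B}$ into $\mathbb{B}$ and hence $|f|\leq 1$ there. Applying the maximum modulus principle for slice regular functions on the closed ball $\overline{\mathbb{B}_{r}}$ gives $|h(q_{0})|\leq\max_{|q|=r}|h(q)|\leq 1/r^{m}$, and letting $r\to 1^{-}$ yields $|h(q_{0})|\leq 1$. Alternatively, and avoiding any appeal to a quaternionic maximum principle, I would argue slicewise: for each $I\in\mathbb{S}$ the restriction $h_{I}$ is an $\mathbb{H}$-valued holomorphic function on the disk $\mathbb{B}_{I}$, so writing $h_{I}=F+GJ$ with $F,G\colon\mathbb{B}_{I}\to\mathbb{C}_{I}$ holomorphic and $J\perp I$, the function $|h_{I}|^{2}=|F|^{2}+|G|^{2}$ is subharmonic; the classical maximum principle then yields the same bound on each slice, and every point of $\mathbb{B}$ lies on some slice.

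Granting $|h|\leq 1$, the growth bound reads $|f(q)|=|q|^{m}|h(q)|\leq|q|^{m}$, while $|f^{(m)}(0)|=m!\,|h(0)|\leq m!$. For the rigidity statement I would analyze when equality occurs. If $|f(q_{0})|=|q_{0}|^{m}$ for some $q_{0}\neq 0$, or if $|f^{(m)}(0)|=m!$, then $|h|$ attains its maximal value $1$ at the interior point $q_{0}$ (respectively at $0$). The rigidity half of the maximum modulus principle then forces $h$ to be constant; slicewise this is seen from the fact that a subharmonic $|h_{I}|^{2}$ with an interior maximum is constant on $\mathbb{B}_{I}$, whence $\Delta|h_{I}|^{2}=4(|F'|^{2}+|G'|^{2})=0$ makes the holomorphic components constant, so $h\equiv u$ with $|u|=1$, i.e.\ $u\in\partial\mathbb{B}$, and $f(q)=q^{m}u$. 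Conversely, $f(q)=q^{m}u$ with $u\in\partial\mathbb{B}$ realizes equality everywhere, closing the characterization.

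The routine parts are the factorization and the multiplicativity of the modulus; the crux is the rigidity step. The main obstacle is to justify that an interior maximum of $|h|$ forces $h$ to be globally constant in the noncommutative slice regular setting: one must pass from constancy of $|h_{I}|$ on a single slice to constancy of $h$ on all of $\mathbb{B}$, which I would handle via the identity/representation principle for slice regular functions, since constancy on one slice pins down every power series coefficient. Care is also needed to place the factor $q^{m}$ on the correct (left) side, because on a slice $q^{-m}$ commutes past the holomorphic data only from the left, whereas a right placement would introduce an antiholomorphic factor and break the subharmonicity argument.
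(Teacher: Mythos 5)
Your argument is correct and complete: factoring $f(q)=q^{m}h(q)$, bounding $|h|$ by the maximum modulus principle (or slicewise via the splitting $h_{I}=F+GJ$ and subharmonicity of $|F|^{2}+|G|^{2}$), and extracting the rigidity from the vanishing of $\Delta|h_{I}|^{2}$ together with the identity principle is exactly the standard route. The paper itself gives no proof of this statement --- it is quoted from \cite[Theorem 4.1]{GS2} --- and your proof coincides in substance with the argument in that source.
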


In fact, we  can establish a sharpened form  of the  Schwarz lemma for   slice regular functions.  Its complex version is due to Rogosinski (see e.g., \cite[p. 200]{Duren}).

For $q_{0}\in \mathbb{H}$,  denote $B(q_{0},r)=\{q\in \mathbb{H}: |q-q_{0}|<r\}$.
\begin{theorem}\label{Rogosinski}
 Let $q_{0},b \in \mathbb{B}$. For the set of all slice regular functions
$f: \mathbb{B}\rightarrow  \mathbb{B} $ with $f(0)=0$ and $f'(0)=b$, the range of values of $f(q_{0})$ is the
closed ball  $\overline{B(c,r)}$, where
$$c=\frac{q_{0}b(1-|q_{0}|^{2})}{1-|q_{0}b|^{2}}, \quad r=\frac{|q_{0}|^{2}(1-|b|^{2})}{1-|q_{0}b|^{2}}.$$
\end{theorem}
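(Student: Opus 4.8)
The plan is to strip off the prescribed zero at the origin, reduce the problem to a sharp description of the range of a single auxiliary slice regular self-map $g$ of $\mathbb{B}$, and then recover the ball for $f(q_0)$ by a left multiplication. Since $f(q)=\sum_{n\ge1}q^na_n$ with $a_1=b$, I would set $g(q)=\sum_{n\ge1}q^{n-1}a_n$, a slice regular function with $g(0)=b$ and $f(q)=q\ast g(q)=q\,g(q)$ pointwise (the powers of $q$ commute). The Schwarz lemma (Theorem \ref{generalized-Schwarz-lemma} with $m=1$) gives $|f(q)|\le|q|$, hence $|g(q)|\le1$ on $\mathbb{B}$, so $g$ is a slice regular self-map of $\mathbb{B}$ with $g(0)=b$. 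Because $f(q_0)=q_0\,g(q_0)$ and left multiplication by the fixed quaternion $q_0$ is a dilation by $|q_0|$ composed with a rotation, it carries any closed ball $\overline{B(c_g,r_g)}$ onto $\overline{B(q_0c_g,\,|q_0|r_g)}$. Thus it suffices to prove that the range of $g(q_0)$ is the closed ball $\overline{B(c_g,r_g)}$ with $c_g=\frac{(1-|q_0|^2)b}{1-|q_0|^2|b|^2}$ and $r_g=\frac{|q_0|(1-|b|^2)}{1-|q_0|^2|b|^2}$; substituting $q_0c_g$ and $|q_0|r_g$ and using $|q_0b|=|q_0||b|$ reproduces exactly the stated $c$ and $r$, and in particular explains the noncommutative ordering $q_0b$ appearing in $c$.

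For the inclusion $g(q_0)\in\overline{B(c_g,r_g)}$ I would post-compose $g$ with the target regular Möbius transformation sending $b$ to $0$, realised through the $\ast$-product rather than ordinary composition: set $\Phi=(1-g\ast\bar b)^{-\ast}\ast(g-b)$. The key elementary fact, valid over $\mathbb{H}$ since only moduli and real parts occur, is that the pointwise map $w\mapsto(1-w\bar b)^{-1}(w-b)$ satisfies $|1-w\bar b|^2-|w-b|^2=(1-|w|^2)(1-|b|^2)\ge0$ and hence sends $\overline{\mathbb{B}}$ into $\overline{\mathbb{B}}$. By Proposition \ref{prop:Quotient Relation}, $\Phi$ equals this pointwise Möbius transform of $g(T(q))$ for $T=T_{1-g\bar b}$, so $\Phi$ is again a slice regular self-map of $\mathbb{B}$ with $\Phi(0)=0$ (since $T(0)=0$ and the Möbius factor sends $g(0)=b$ to $0$). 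The regular Schwarz lemma then gives $|\Phi(q)|\le|q|$, and inverting the Möbius factor turns this into the asserted ball constraint. For sharpness I would use the extremal family $g_u(q)=(1-(qu)\ast\bar b)^{-\ast}\ast(b-qu)$, $u\in\partial\mathbb{B}$: these are regular self-maps with $g_u(0)=b$ for which the associated $\Phi$ is the extremal Schwarz map $q\mapsto qu$, so $g_u(q_0)$ fills out $\partial B(c_g,r_g)$; pulling back through $f=q\ast g_u$ then realises every boundary point of $\overline{B(c,r)}$, and a continuity/normal-families argument covers the interior.

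The main obstacle is the noncommutativity, entering precisely through Proposition \ref{prop:Quotient Relation}: the $\ast$-operations do not evaluate at $q_0$ but at the transformed argument $T(q_0)$, which lies on the sphere $[q_0]$ yet differs from $q_0$ and depends on $g$. Consequently the estimate above a priori controls $g$ only at a $g$-dependent point of $[q_0]$, not at $q_0$ itself, and one must restore evaluation at $q_0$ exactly. This is where the right slice regular theory is needed: I would introduce a companion right slice regular auxiliary function, built from $g^c$ and the right $\ast$-product, whose value at $q_0$ coincides with the pointwise Möbius transform of $g(q_0)$ and to which the right-regular Schwarz lemma applies. Matching the left and right constructions on the sphere $[q_0]$ both anchors the estimate to the value $g(q_0)$ and certifies that the resulting region is a genuine round ball rather than an ellipsoid, thereby completing the identification of the range with $\overline{B(c_g,r_g)}$ and hence with $\overline{B(c,r)}$ for $f(q_0)$.
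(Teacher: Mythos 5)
Your skeleton is the paper's: factor $f(q)=qg(q)$ with $g(0)=b$ and $|g|\le 1$, hit $g$ with the regular M\"obius factor $(1-g\overline{b})^{-\ast}\ast(b-g)$, apply the Schwarz lemma, and undo the M\"obius map. But the two places you leave open are exactly where the quaternionic proof needs content, and the fixes you sketch are not the ones that work. For the inclusion, you correctly note that Proposition \ref{prop:Quotient Relation} evaluates $h=(1-g\overline{b})^{-\ast}\ast(b-g)$ at the $g$-dependent point $T_{(1-g\overline{b})^{c}}(q)$ rather than at $q$; your remedy --- an unconstructed ``companion right slice regular function built from $g^c$'' to be ``matched'' with the left one --- is neither carried out nor needed. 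The actual resolution is one line: the Schwarz bound $|h(q)|\le |q|$ holds at \emph{every} $q$, the map $T_{1-g\overline{b}}$ is a modulus-preserving bijection with inverse $T_{(1-g\overline{b})^{c}}$, so substituting $q\mapsto T_{1-g\overline{b}}(q)$ gives the pointwise inequality $|(1-g(q)\overline{b})^{-1}(b-g(q))|\le|q|$ at every point, in particular at $q_0$. (Your worry about an ``ellipsoid'' is also unfounded: once the pointwise inequality is in hand, identifying $\{w:|(1-w\overline{b})^{-1}(b-w)|\le|q_0|\}$ as a round closed ball is a direct computation with quaternionic moduli.)

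The surjectivity half has the more serious gap. Your claim that for $g_u(q)=(1-qu\overline{b})^{-\ast}\ast(b-qu)$ the ``associated $\Phi$ is the extremal Schwarz map $q\mapsto qu$, so $g_u(q_0)$ fills out the boundary sphere'' repeats the same evaluation-point problem you flagged earlier, now unaddressed: $g_u(q_0)$ equals the pointwise M\"obius transform evaluated at $\widetilde{q}u$ for a $u$-dependent point $\widetilde{q}$ with $|\widetilde{q}|=|q_0|$ but $\widetilde{q}\neq q_0$, so surjectivity of $u\mapsto g_u(q_0)$ onto the boundary sphere is precisely what must be proved, not assumed. Deferring the interior to ``continuity/normal families'' also needs an actual mechanism (e.g.\ convexity and compactness of the set of competitors). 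The paper avoids both issues by parametrizing the extremal family by $p\in\overline{\mathbb{B}}$, namely $f(q)=q(1-q|b|p)^{-\ast}\ast(|b|-qp)\frac{b}{|b|}$, and computing explicitly that $f(q_0)=c+\frac{q_0^{2}(1-|b|^{2})}{1-|q_0b|^{2}}\,\phi(p)\frac{b}{|b|}$, where $\phi(p)=(|b|\overline{q_0}-p)\ast_r(1-q_0|b|p)^{-\ast_r}$ is a \emph{right} slice regular M\"obius transformation of $\overline{\mathbb{B}}$ in the parameter $p$; since $\phi(\overline{\mathbb{B}})=\overline{\mathbb{B}}$, the whole closed ball is attained in one stroke. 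This is where the right-regular theory genuinely enters --- in the coverage, not in the inclusion.
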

\begin{proof}For $b=0,$ the  assertion is trivial. Now we consider the case for $b\neq0.$
Denote the slice regular function $g(q)=q^{-1}f(q) $ on $\mathbb{B}$ with  $g(0)=b$ and $|g(q)| \leq 1$ for all $q\in \mathbb{B}$ by the   maximum modulus principle for slice regular functions (cf. \cite[Theorem 7.1]{GSS}).
Denote the slice regular function
$$h(q)=(1-g(q)\overline{b})^{-*}*(b-g(q)),$$
 with $h(0)=0$ and  $|h(q)| < 1$ for all $q\in \mathbb{B}$ by  Proposition \ref{prop:Quotient Relation}.
From   Theorem \ref{generalized-Schwarz-lemma}, it follows that
$$|h(q)|\leq |q|, \quad \forall \ q\in \mathbb{B}.$$
That is, by Proposition \ref{prop:Quotient Relation},
$$|(1-g\circ T_{(1-g\overline{b})^{c}}(q)\overline{b})^{-1}(b-g\circ T_{(1-g\overline{b})^{c}}(q))|\leq |q|, \quad \forall \ q\in \mathbb{B}.$$
Since $T_{(1-g\overline{b})^{c}}\circ T_{1-g\overline{b}}(q)=q$ and $|T_{1-g\overline{b}}(q)|=|q|,$ we obtain
$$|(1-g(q)\overline{b})^{-1}(b-g(q))|\leq |q|, \quad \forall\ q\in \mathbb{B}.$$
Equivalently,
$$\frac{|f(q)-qb|}{|f(q)-q\overline{b}^{-1}|}\leq |bq|, \quad \forall\ q\in \mathbb{B}.$$
A direct computation   shows that
 $$\Big|f(q)- \frac{q b(1-|q |^{2})}{1-|q b|^{2}}\Big| \leq \frac{|q |^{2}(1-|b|^{2})}{1-|q b|^{2}}, \quad \forall\ q\in \mathbb{B} .$$
Hence $f(q_{0})\in \overline{B(c,r)}$ with $c$ and $r$ as desired.

To see that $\overline{B(c,r)}$  is covered, let  $p\in \overline{\mathbb{B}}$ and set the slice regular function
$$f(q)=q(1-q|b|p)^{-*}*(|b|-qp)\frac{b}{|b|}, \quad \forall\ q\in \mathbb{B}.$$
 It is  easy to see that   $f(0)=0$ and $f'(0)=b$.
 Now we show that  $|f(q)| < 1$ for all $q\in \mathbb{B}$. To this end, we only prove that
 \begin{eqnarray} \label{dd}
 \big|(1-q|b|p)^{-*}*(|b|-qp)\big|<1,  \quad \forall\ q\in \mathbb{B}.
\end{eqnarray}
Note that
$$ (1-|b|^{2})(1-|qp|^{2})>0,  \quad \forall\ q\in \mathbb{B},$$
that is to say
$$\big|  |b|-qp \big|<\big|1-q|b|p \big|,  \quad \forall\ q\in \mathbb{B}.$$
Therefore, (\ref{dd}) holds   by Proposition \ref{main-proposition}.

 After a lengthy but direct  calculation, there holds that
$$f(q)=\frac{qb(1-|q|^{2})}{1-|qb|^{2}}+ \frac{q^{2}(1-|b|^{2})}{1-|qb|^{2}}\phi(p)\frac{b}{|b|},$$
where
 $$\phi(p)
=\overline{q}|b|-(1-|qb|^{2})\sum_{n=1}^{+\infty}(|b|q)^{n-1}p^{n}
=(|b|\overline{q}-p)*_{r}(1-q|b|p)^{-*_{r}}$$
 is a right slice regular M\"{o}bious transform of $\mathbb{B}$.

Since  $p\in \overline{\mathbb{B}}$  is arbitrary,  it follows that  $ \phi(\overline{\mathbb{B}})=\overline{\mathbb{B}} $.

Note   that, for any $u\in  \partial \mathbb{B} $,
 $$\{qu:  q\in \overline{\mathbb{B}}\}=\overline{\mathbb{B}}.$$
 Hence the closed ball  $\overline{B(c,r)}$ is covered. The proof is complete.
\end{proof}
From Theorem \ref{Rogosinski} and its proof   we obtain a sharper    Schwarz lemma.
\begin{corollary}\label{Rogosinski-corollary}
Let  $f$ be a slice regular function on $\mathbb{B}$ with $f(0)=0$ and $|f(q)|<1$ for all $q\in \mathbb{B}$. Then
$$|q|\frac{|f'(0)|-|q|}{1-|qf'(0)|}\leq |f(q)|\leq|q|\frac{|q|+|f'(0)|}{1+|qf'(0)|}, \quad \forall\ q\in \mathbb{B}.$$
Equality holds   at some point $q_{0} \neq 0$ if
and only if $f(q)=q\varphi_{a}(q)u$   for some $a\in \mathbb{B}, u\in \partial \mathbb{B}$.
\end{corollary}

As an important generalization of the Schwarz  lemma,  the classical Schwarz-Pick lemma states that if $F$ is a holomorphic self-mapping on   $\mathbb{D}$,    then
\begin{equation}\label{classical-Schwarz-Pick}
|F'(z)|\leq \frac{1-|F(z)|^{2}}{1-|z|^{2}}, \qquad \forall\ z\in \mathbb{D}.
\end{equation}
 However, this classical version fails even for the slice regular automorphisms. Indeed, we take   the slice  regular  M\"{o}bius transformation (see \cite[Theorem 7.6]{Stoppato})
$$\varphi_{a}(q)=(1-q\overline{a})^{-*}*(a-q)=a-(1-|a|^{2})\sum_{n=1}^{+\infty}q^{n}
\overline{a}^{n-1}.$$
Pick $a=\frac{i}{2}$ and  $q_0=\frac{j}{2}$.
By direct computation, we have
$$\varphi_{a}(q_0)=\frac{2}{5}(i-j), \qquad \varphi'_{a}(q_0)=-\frac{204}{225}-\frac{96}{225}k,$$
so that
$$|\varphi'_{a}(q_0)|=\sqrt{\frac{50832}{50625}}> \frac{1-|\varphi_{a}(q_0)|^{2}}{1-|q_0|^{2}}=\frac{68}{75} .$$

Fortunately, a variant  of the Schwarz-Pick lemma for self-mapping of $\mathbb{B}$ was established in \cite{Bisi-Stoppato} . As a  special case, one can obtain a    coefficient  estimate.
\begin{lemma}\label{Schwarz-Pick-Cor}
Let  $f $ be a  slice  regular  function  on $\mathbb{B}$   such  that $|f(q)|\leq1$ for all $q\in \mathbb{B}$. Then
 \begin{eqnarray} \label{Schwarz-lemma-12}
|f'(0)|\leq 1-|f (0)|^{2} .
\end{eqnarray}
 Equality holds  in $(\ref{Schwarz-lemma-12})$ if and only if $f$ is of the form
$$f(q)= (1-q\overline{a})^{-*}*(a-q)u,\qquad \forall \  q\in \mathbb B,$$
for some  $a\in \overline{\mathbb{B}}, u \in \partial\mathbb{B}$.
\end{lemma}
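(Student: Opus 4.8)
The plan is to deduce this coefficient bound from the Schwarz lemma for slice regular self-maps (Theorem \ref{generalized-Schwarz-lemma} with $m=1$) by killing the value $f(0)$ with a regular Möbius-type transformation on the target side, exactly as one reduces Schwarz--Pick to Schwarz in the classical theory. First I would dispose of the degenerate case $|f(0)|=1$: then $f$ attains its maximum modulus at the interior point $0$, so the maximum modulus principle for slice regular functions forces $f$ to equal the constant $a:=f(0)$. Hence $f'(0)=0=1-|a|^2$ and equality holds trivially, with $f(q)=\varphi_a(q)\cdot 1$, noting that $\varphi_a\equiv a$ when $|a|=1$. So I may assume $|a|<1$.

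Next I would introduce the auxiliary function $g:=F^{-\ast}\ast G$, where $F:=1-f\ast\bar a$ and $G:=f-a$ are slice regular on $\mathbb{B}$. The preliminary facts to check are: (i) $F$ never vanishes on $\mathbb{B}$, since $|F(q)|=|1-f(q)\bar a|\ge 1-|f(q)||a|\ge 1-|a|>0$, so $F^{-\ast}$ is defined on all of $\mathbb{B}$ and $g$ is genuinely slice regular there; and (ii) $g(0)=F(0)^{-1}G(0)=(1-|a|^2)^{-1}\cdot 0=0$. To see that $g$ maps $\mathbb{B}$ into $\overline{\mathbb{B}}$ I would invoke the quotient relation of Proposition \ref{prop:Quotient Relation}, which gives $|g(q)|=|G(T_F(q))|/|F(T_F(q))|$, together with the pointwise quaternionic inequality $|w-a|\le|1-w\bar a|$ valid for all $|w|\le 1$. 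The latter follows from the identity
$$|1-w\bar a|^2-|w-a|^2=(1-|w|^2)(1-|a|^2)\ge 0.$$
A maximum modulus argument then upgrades this to $g:\mathbb{B}\to\mathbb{B}$ unless $g\equiv 0$, the latter meaning $f\equiv a$, in which case the asserted inequality is strict.

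With $g$ in hand I would apply Theorem \ref{generalized-Schwarz-lemma} to obtain $|g'(0)|\le 1$. The quantitative step is to extract $g'(0)$ from the regular product. Reading off the degree-zero and degree-one coefficients of $F$, of $F^{-\ast}$ and of $G$ from their power series, where crucially $F_0=1-|a|^2$ is \emph{real} (so its inverse commutes past everything and the bookkeeping stays manageable), yields $g'(0)=(1-|a|^2)^{-1}f'(0)$. Combined with $|g'(0)|\le 1$ this gives $|f'(0)|\le 1-|a|^2=1-|f(0)|^2$, as claimed.

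For the equality statement, the rigidity clause of Theorem \ref{generalized-Schwarz-lemma} says $|g'(0)|=1$ if and only if $g(q)=qu$ for some $u\in\partial\mathbb{B}$. Substituting this into $G=F\ast g$ and solving the resulting regular equation $f-a=(1-f\ast\bar a)\ast(qu)$ for $f$ recovers $f$ as a regular Möbius transformation $\varphi_b$ multiplied on the right by a unit, which is precisely the stated family $f(q)=(1-q\bar a)^{-\ast}\ast(a-q)u$ after identifying the parameters. The main obstacle throughout is the non-commutativity of $\mathbb{H}$: it obstructs the naive composition $\varphi_a\circ f$ (which need not be slice regular) and thereby forces the $\ast$-product construction of $g$; it also demands care in the coefficient bookkeeping for $g'(0)$ and, above all, in inverting the equality relation, where left versus right multiplication by the constants $a$, $\bar a$ and $u$ must be tracked precisely.
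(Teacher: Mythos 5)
Your argument is correct, but it is not the route the paper takes: the paper offers no proof of this lemma at all, deriving it instead as a special case of the Schwarz--Pick lemma for slice regular self-maps established in the cited work of Bisi and Stoppato. You instead give a self-contained reduction to the quaternionic Schwarz lemma (Theorem \ref{generalized-Schwarz-lemma} with $m=1$), replacing the illegal composition $\varphi_a\circ f$ by the regular quotient $g=(1-f\bar a)^{-\ast}\ast(f-a)$ and transferring the modulus bound through $T_F$ via Proposition \ref{prop:Quotient Relation} and the identity $|1-w\bar a|^2-|w-a|^2=(1-|w|^2)(1-|a|^2)$, which does hold verbatim for quaternions. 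This is exactly the classical Schwarz-to-Schwarz--Pick reduction adapted to the $\ast$-calculus, and it buys independence from the external reference at the cost of some coefficient bookkeeping; the computation $g'(0)=(1-|a|^2)^{-1}f'(0)$ is right, and the reality of $1-|a|^2$ is indeed what makes it painless. Two small loose ends you should tie up: (i) to define $F^{-\ast}$ on all of $\mathbb{B}$ you need $\mathcal{Z}_{F^s}=\varnothing$, not merely $\mathcal{Z}_F=\varnothing$; this follows from the standard fact (used implicitly elsewhere in the paper) that $F$, $F^c$ and hence $F^s$ are simultaneously zero-free. (ii) In the equality case you should record the explicit parameter identification (from $a_n=F_{n-1}u$ one gets $f=\varphi_b\cdot v$ with $b=-a\bar u$ and $v=-u$) and also verify the converse, namely that every $f=(1-q\bar b)^{-\ast}\ast(b-q)v$ satisfies $|f'(0)|=1-|b|^2=1-|f(0)|^2$; both are one-line checks, but the ``if and only if'' requires them.
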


\section{Coefficient  Estimates}
As in  the complex holomorphic case, let $\mathcal{P}$  denote the class of slice regular  functions $p$ on $\mathbb{B}$ such that $p(0) = 1$
and ${\rm{Re}}\,   p(q) > 0$ on $\mathbb{B}$. The function class  $\mathcal{P}$ is usually called the Carath\'{e}odory class.

To prove   Theorems  \ref{main-2} and \ref{Fekete-Szego-H}, we shall first  present a coefficient  estimate for the class  $\mathcal{P}$ based on   Lemma  \ref{Schwarz-Pick-Cor}.

\begin{theorem}\label{sharper-Caratheodory-theorem}
Let  $f(q)=1+\sum\limits_{n=1}^\infty q^n a_n$ be  a   function in $\mathcal{P}$. Then
\begin{eqnarray} \label{sharper-C}
\Big|a_{2}-\frac{a_{1}^{2}}{2}\Big|\leq 2- \frac{|a_{1}|^{2}}{2}.
\end{eqnarray}
 Equality holds  in $(\ref{sharper-C})$ if and only if $$f(q)=(q\varphi_{a}(q)u+1)\ast (1-q\varphi_{a}(q)u )^{-\ast},$$
for some  $a\in \overline{\mathbb{B}}$ and $ u \in \partial\mathbb{B}$.
\end{theorem}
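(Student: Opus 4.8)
The plan is to mimic the classical proof via a Cayley-type transform, but carried out with the regular product and validated by Proposition \ref{main-proposition} and the Schwarz-type estimate of Lemma \ref{Schwarz-Pick-Cor}. First I would observe that for any quaternion $w$ one has $|w-1|<|w+1|$ exactly when ${\rm{Re}}\, w>0$; hence $f\in\mathcal P$ forces $|f(q)-1|<|f(q)+1|$ on $\mathbb B$, and since ${\rm{Re}}\, f>0$ the function $f+1$ never vanishes, so $\mathcal Z_{f+1}=\varnothing$. By the modulus equivalence (\ref{modulus-inverse}) in Proposition \ref{main-proposition}, the slice regular function
$$\phi:=(f+1)^{-\ast}\ast(f-1)$$
then satisfies $|\phi(q)|<1$ on $\mathbb B$, that is, $\phi$ is a slice regular self-map of $\mathbb B$.

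Next I would extract the relevant coefficients of $\phi$. Writing $\phi(q)=\sum_{n\ge1}q^n b_n$ (the constant term vanishes because $f(0)=1$) and using the characterization $(f+1)\ast\phi=f-1$, a comparison of the coefficients of $q$ and $q^2$ in the $\ast$-product gives
$$2b_1=a_1,\qquad 2b_2+a_1b_1=a_2,$$
so that $b_1=a_1/2$ and $b_2=\tfrac12\big(a_2-\tfrac12a_1^2\big)$. In particular the quantity to be estimated is $a_2-\tfrac12a_1^2=2b_2$. I would then reduce the bound to Lemma \ref{Schwarz-Pick-Cor}: since $\phi(0)=0$, Theorem \ref{generalized-Schwarz-lemma} (with $m=1$) yields $|\phi(q)|\le|q|$, so the power series $\psi(q):=q^{-1}\phi(q)=\sum_{n\ge1}q^{n-1}b_n$ defines a slice regular function with $|\psi(q)|\le1$ on $\mathbb B$. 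Applying Lemma \ref{Schwarz-Pick-Cor} to $\psi$, whose first two Taylor coefficients are $\psi(0)=b_1$ and $\psi'(0)=b_2$, gives
$$|b_2|\le 1-|b_1|^2.$$
Multiplying by $2$ and substituting $b_1=a_1/2$, $2b_2=a_2-\tfrac12a_1^2$ produces exactly
$$\Big|a_2-\frac{a_1^2}{2}\Big|\le 2-\frac{|a_1|^2}{2},$$
which is (\ref{sharper-C}).

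For the equality statement I would trace the chain of inequalities backwards. Equality in (\ref{sharper-C}) forces equality in Lemma \ref{Schwarz-Pick-Cor} for $\psi$, whence $\psi(q)=(1-q\overline a)^{-\ast}\ast(a-q)u=\varphi_a(q)u$ for some $a\in\overline{\mathbb B}$, $u\in\partial\mathbb B$, and consequently $\phi(q)=q\varphi_a(q)u$. Solving $(f+1)\ast\phi=f-1$ for $f$ gives $f\ast(1-\phi)=1+\phi$, i.e. $f=(1+\phi)\ast(1-\phi)^{-\ast}$, which upon inserting $\phi=q\varphi_a(q)u$ yields the asserted extremal form $f(q)=(q\varphi_a(q)u+1)\ast(1-q\varphi_a(q)u)^{-\ast}$; the converse is checked by running these identities in reverse. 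The main obstacle is purely algebraic: because $\mathbb H$ is non-commutative one must consistently respect the order of the $\ast$-products when verifying $(f+1)\ast\phi=f-1$, when reading off $b_1$ and $b_2$, and when inverting to recover $f$, and one must confirm that the passage from $\phi$ to $\psi=q^{-1}\phi$ is legitimate, which is precisely what the Schwarz lemma guarantees.
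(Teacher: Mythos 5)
Your proposal is correct and follows essentially the same route as the paper: pass to $\phi=(f+1)^{-\ast}\ast(f-1)$ via Proposition \ref{main-proposition}, divide by $q$ (the paper's function $g$ is exactly your $\psi$), read off $g(0)=a_1/2$ and $g'(0)=a_2/2-a_1^2/4$, and conclude with Lemma \ref{Schwarz-Pick-Cor}, with the same inversion $f=(1+\phi)\ast(1-\phi)^{-\ast}$ for the equality case. The only cosmetic difference is that you justify $|\psi|\le1$ by the Schwarz lemma while the paper invokes the maximum modulus principle; both are valid.
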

\begin{proof}
Define\begin{eqnarray*}
g(q)=
\left\{
\begin{array}{lll}
q^{-1}(f(q)+1)^{-\ast} \ast( f(q)-1),   &\mathrm {if} \ q\in \mathbb{B}\setminus\{0\},
\\
 a_{1}/2 ,   &\mathrm {if} \ q=0.
\end{array}
\right.
\end{eqnarray*}

From the assumption of $f$, we have
$$|(f(q)+1)^{-1} ( f(q)-1)|<1, \quad \forall \ q \in \mathbb{B},$$
then, by Proposition \ref{main-proposition},
the function $h(q)=(f(q)+1)^{-\ast} \ast( f(q)-1)$ is slice regular on $\mathbb{B}$ with $h(0)=0, h'(0)=a_{1}/2$ and $|h(q)|<1$ for all $q\in \mathbb{B}$. By the   maximum modulus principle for slice regular functions (cf. \cite[Theorem 7.1]{GSS}),  we obtain  that the function $g $ is a slice regular function on    $\mathbb B$ with $g'(0)=a_{2}/2-a^{2}_{1}/4$ and $|g(q)|\leq1$ for all $q\in \mathbb{B}$. From  Lemma \ref{Schwarz-Pick-Cor},   (\ref{sharper-C}) follows and equality holds  in $(\ref{sharper-C})$ if and only if  $g(q)=\varphi_{a}(q)u$
 for some $a\in \overline{\mathbb{B}}$ and $ u \in \partial\mathbb{B}$.
 By simple calculations, we deduce that
 $$f(q)=(q\varphi_{a}(q)u+1)\ast (1-q\varphi_{a}(q)u )^{-\ast}.$$
 Now the proof  is  complete.
\end{proof}

 As a direct consequence, we obtain all coefficient  estimates for the Carath\'{e}odory   class $\mathcal{P}$ applying the approach of finite average as in the case of one complex variable. Now  let us recall this result    in \cite{Ren}.
\begin{theorem}\label{Ren}
Let  $f(q)=1+\sum\limits_{n=1}^\infty q^n a_n$ be a slice regular function in $\mathcal{P}$. Then
\begin{eqnarray}\label{eq:11}
\frac{1-|q|}{1+|q|}\leq  {\rm{Re}}f(q)\leq |f(q)|\leq \frac{1+|q|}{1-|q|}, \qquad\forall\ q\in\mathbb B,
\end{eqnarray}
 and \begin{eqnarray}\label{eq:11an}|a_n|\leq 2,\qquad \ \  n = 1, 2, \ldots.  \end{eqnarray}
Moreover, $|a_1|=2$ or equality holds for the first or third inequality in $(\ref{eq:11})$ at some  $q_0\neq 0$ if and only if
$$f(q)=(1-q u)^{-\ast}\ast(1+qu),\qquad \forall\,\, q\in \mathbb B,$$
for some  $u \in \partial\mathbb{B}$.
\end{theorem}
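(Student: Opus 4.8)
The plan is to reduce the entire statement to the Schwarz lemma (Theorem~\ref{generalized-Schwarz-lemma}) by means of the quaternionic Cayley transform, and thereafter to a single planar computation. First I would exploit positivity of the real part: for every $q\in\mathbb{B}$ one has $|f(q)+1|^{2}-|f(q)-1|^{2}=4\,{\rm Re}\, f(q)>0$, so $f+1$ is zero-free and $|(f(q)+1)^{-1}(f(q)-1)|<1$ pointwise. By the modulus equivalence $(\ref{modulus-inverse})$ of Proposition~\ref{main-proposition}, the regular quotient $h:=(f+1)^{-\ast}\ast(f-1)$ is then a slice regular self-map of $\mathbb{B}$ with $h(0)=0$ and $h'(0)=a_{1}/2$, exactly as in the proof of Theorem~\ref{sharper-Caratheodory-theorem}. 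Applying Theorem~\ref{generalized-Schwarz-lemma} gives $|h(q)|\le|q|$ on $\mathbb{B}$ and $|h'(0)|\le1$, the latter already yielding the case $n=1$, namely $|a_{1}|\le2$.

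Next I would convert the $\ast$-estimate on $h$ into a genuine pointwise estimate on $f$. By Proposition~\ref{prop:Quotient Relation}, $h(q)=\bigl(f(T(q))+1\bigr)^{-1}\bigl(f(T(q))-1\bigr)$, where $T=T_{(f+1)}$ is a diffeomorphism of $\mathbb{B}$ satisfying $|T(q)|=|q|$. Substituting $w=T(q)$ and using modulus-preservation turns $|h(q)|\le|q|$ into
$$|f(w)-1|\le |w|\,|f(w)+1|,\qquad \forall\,w\in\mathbb{B}.$$
For a fixed $w$, squaring gives $(1-|w|^{2})(|f(w)|^{2}+1)\le 2(1+|w|^{2})\,{\rm Re}\, f(w)$. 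Since this constraint involves only the two scalars ${\rm Re}\, f(w)$ and $|f(w)|$, the value $f(w)$ is confined to the Apollonius disk lying in the real $2$-plane ${\rm Re}\,\mathbb{R}\oplus{\rm Im}\, f(w)\,\mathbb{R}$, centred at $\tfrac{1+|w|^{2}}{1-|w|^{2}}$ with radius $\tfrac{2|w|}{1-|w|^{2}}$, precisely as in one complex variable. Reading off the maximal modulus $\tfrac{1+|w|}{1-|w|}$ and the minimal real part $\tfrac{1-|w|}{1+|w|}$ over this disk yields the outer and inner inequalities of $(\ref{eq:11})$; the middle inequality ${\rm Re}\, f\le|f|$ is trivial.

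For the general coefficient bound I would use the finite-average (root-of-unity) device. Fix $I\in\mathbb{S}$ and put $\zeta=e^{2\pi I/m}$; on the slice $\mathbb{B}_{I}$ all of $\zeta,q$ commute, so
$$\frac{1}{m}\sum_{j=0}^{m-1}f(\zeta^{j}q)=1+\sum_{k\ge1}q^{km}a_{km},$$
and this has positive real part as a convex combination of values of ${\rm Re}\, f$. Writing the right-hand side as $g(q^{m})$ with $g(w)=1+\sum_{k\ge1}w^{k}a_{km}$, and using that $q\mapsto q^{m}$ maps $\mathbb{B}_{I}$ onto $\mathbb{B}_{I}$ together with the arbitrariness of $I\in\mathbb{S}$, shows ${\rm Re}\, g>0$ on all of $\mathbb{B}$, i.e. $g\in\mathcal{P}$. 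The first coefficient of $g$ is $a_{m}$, so the base case $|a_{1}(g)|\le2$ gives $|a_{m}|\le2$ for every $m$.

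Finally, for the extremal clause: equality $|a_{1}|=2$, or equality in the first or third inequality of $(\ref{eq:11})$ at some $q_{0}\ne0$, forces equality in the corresponding Schwarz estimate for $h$; the rigidity part of Theorem~\ref{generalized-Schwarz-lemma} then gives $h(q)=qu$ for some $u\in\partial\mathbb{B}$, and inverting the Cayley transform produces $f(q)=(1-qu)^{-\ast}\ast(1+qu)$, with the converse being a direct verification on this Herglotz-type function. I expect the main obstacle to be the single non-commutative step, namely upgrading the $\ast$-product bound $|h(q)|\le|q|$ to the honest pointwise inequality $|f(w)-1|\le|w|\,|f(w)+1|$; this is exactly where the modulus-preserving change of variables $w=T_{(f+1)}(q)$ of Proposition~\ref{prop:Quotient Relation} is essential. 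Once one is localized at a single point $w$, the Apollonius reduction and the averaging argument proceed as in the classical case.
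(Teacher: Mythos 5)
Your proposal is correct, but note that the paper itself does not prove Theorem \ref{Ren}: it imports the statement wholesale from \cite{Ren}, adding only the remark that the coefficient bounds are ``a direct result \ldots\ applying the approach of finite average'' to the $n=1$ case coming from the Cayley transform. Your coefficient argument is precisely that route --- the transform $h=(f+1)^{-\ast}\ast(f-1)$, Proposition \ref{main-proposition} to pass between pointwise quotients and $\ast$-quotients, the Schwarz lemma (Theorem \ref{generalized-Schwarz-lemma}) for $|a_1|\le 2$, and root-of-unity averaging on each slice $\mathbb{C}_I$ for general $n$ (the only point worth making explicit there is that $g(w)=1+\sum_{k\ge1}w^{k}a_{km}$ is itself slice regular on $\mathbb{B}$, which holds since $\limsup_k|a_{km}|^{1/k}\le\limsup_n|a_n|^{1/n}\le1$). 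For the growth inequalities (\ref{eq:11}) and the rigidity clause the paper offers no proof at all, and your argument supplies one: transporting $|h(q)|\le|q|$ to the honest pointwise bound $|f(w)-1|\le|w|\,|f(w)+1|$ via the modulus-preserving diffeomorphism $T_{f+1}$ of Proposition \ref{prop:Quotient Relation} (legitimate because $f+1$ is zero-free, hence so is $(f+1)^{s}$), and then reading off the extremes of the Apollonius ball $\bigl|f(w)-\tfrac{1+r^{2}}{1-r^{2}}\bigr|\le\tfrac{2r}{1-r^{2}}$, $r=|w|$, is sound. The equality analysis also works: the extremal values $\tfrac{1\pm|q_0|}{1\mp|q_0|}$ are attained only on the boundary of that ball, which forces $|h(q_1)|=|q_1|$ at some $q_1=T_{f+1}^{-1}(q_0)\ne 0$, whence $h(q)=qu$ by Schwarz rigidity and $f-1=(f+1)\ast(qu)$ gives $a_n=2u^{n}$, i.e. $f=(1-qu)^{-\ast}\ast(1+qu)$. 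In short, you have produced a complete self-contained proof of a statement the paper only cites, and where the paper does indicate a method (the coefficient estimates), yours coincides with it.
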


Now we can prove  our main coefficient estimates.
\begin{proof}[Proof of Theorem \ref{main-2}]
Define
\begin{eqnarray*}
p(q)=
\left\{
\begin{array}{lll}
(q^{-1}f(q))^{-\ast} \ast f'(q)     &\mathrm {if} \ q\in \mathbb{B}\setminus\{0\},
\\
1,   &\mathrm {if} \ q=0,
\end{array}
\right.
\end{eqnarray*}
which belongs to $\mathcal{P}$ by Lemma \ref{main-lemma}. Set $p(z) = 1+ qp_{1}+ \cdots$.
Applying    Theorem \ref{Ren} to function $p$,  we have $|p_{n}|\leq 2$ for all $  n  \geq1$. Comparing the coefficients in the power series
of $f'(q) $ and $q^{-1}f(q)\ast p(q)$, we have
 \begin{equation}\label{coefficient}
 na_n =p_{n-1}+a_{2}p_{n-2}+ \cdots +a_{n-1}p_{1}+a_{n},  \ \  n = 2,3, \ldots.
\end{equation}
By induction, we can obtain $ |a_{n}|\leq n $ for all $n = 2,3,\ldots$.

 It is easy to see that if $ |a_{n}|=n$ for a given $n$, then the above arguments
imply that $|p_{1}|=|a_{2}| = 2$, and thus, by Theorem \ref{Ren}, $p$ is of the form
$$p(q)=(1-qu)^{-\ast}\ast(1+qu)=1+ \sum_{n=1}^{+\infty} q^{n}2u^{n}, \ \ \ \forall \ q\in \mathbb{B}$$
 for some $u\in \partial \mathbb{B}$.
Hence, by (\ref{coefficient}), we have
$$(n-1)a_n =2(u^{n-1}+a_{2}u^{n-2} + \cdots +a_{n-1}u),  \ \  n = 2,3, \ldots,$$
which leads to that
$a_n=nu^{n-1}$ for all $n = 2,3, \ldots$, as desired.
\end{proof}
Note that   Theorem \ref{main-2} generalizes Theorem 2.2.16 in  \cite{Graham-Kohr} to the  non-commutative algebra.  Nevanlinna established the complex version of   Theorem \ref{main-2} for normalized starlike functions in  1920.  In fact, the Bieberbach conjecture over quaternions still holds for slice close-to-convex
functions.
\begin{theorem}\label{main-slice-convex}
Let $f (q) = q+\sum_{n=2}^{+\infty} q^{n}a_{n}\in \mathcal{C}$.  Then
$$ |a_{n}|\leq n, \ \   {\rm{for\ all}}\,  n = 2,3,\ldots.$$
   Equality $|a_{n}| = n $ for a given $n  \geq 2$ holds  if and only if
   $$f(q)=q(1-qu)^{-\ast2}, \ \ \ \forall\ q\in \mathbb{B},$$
 for some $u\in \partial \mathbb{B}$.
   \end{theorem}
\begin{proof}
By definition, there exists a   function $h=q+q^{2}h_{2}+ \cdots\in \mathcal{S}^{*}$   such that
 \begin{equation}\label{def-function}
 p(q) = (q^{-1}h(q))^{-\ast} \ast f'(q)=1+ qp_{1}+ q^{2}p_{2}+\cdots,\ \ \  q\in \mathbb{B},
\end{equation}
which belongs to $\mathcal{P}$ by Lemma \ref{main-lemma}.

From   (\ref{def-function}), we have
$$f'(q)=q^{-1}h(q)\ast p(q)  ,\ \ \  q\in \mathbb{B}.$$
Comparing the coefficients in the power series above,  we have
 \begin{equation}\label{coefficient-2}
 na_n =p_{n-1}+h_{2}p_{n-2}+ \cdots +h_{n-1}p_{1}+h_{n},  \ \  n = 2,3, \ldots.
\end{equation}

Applying    Theorem \ref{Ren} to function $p$,  we obtain that  $|p_{n}|\leq 2$ for all $  n  \geq1$.  From the assumption of $h$,  by Theorem \ref{main-2}, we have  $ |h_{n}|\leq n $ for all $n = 2,3,\ldots$. Combining these  two estimates with (\ref{coefficient-2}), it follows that
$ |a_{n}|\leq n $ for all $n = 2,3,\ldots$.

Following the proof in Theorem \ref{main-2}, the  condition of equality  can be obtained. This completes the proof.
\end{proof}

\begin{proof}[Proof of Theorem \ref{Fekete-Szego-H}]
Let us consider the function
\begin{eqnarray*}
g(q)=
\left\{
\begin{array}{lll}
(qf'(q)+ f (q))^{-\ast} \ast \big(qf'(q)-f (q) \big)     &\mathrm {if} \ q\in \mathbb{B}\setminus\{0\},
\\
0,   &\mathrm {if} \ q=0.
\end{array}
\right.
\end{eqnarray*}
Then it easy to see that $g$ is slice regular on $\mathbb{B}$ with
$$  g_{1}=g'(0)=\frac{a_{2}}{2} ,\ \ g_{2}=\frac{g''(0)}{2}= a_{3}-\frac{3}{4}a_{2}^{2}.$$
From the assumption of $f$, we have
$$\big|(qf'(q)+ f (q))^{-1}   \big(qf'(q)-f (q) \big)\big|<1, \quad \forall \ q\in \mathbb{B}.$$
By Lemma \ref{main-lemma}, $|g(q)|<1$ for all $q\in \mathbb{B}$, which gives that,  by Lemma \ref{Schwarz-Pick-Cor},
$$ |g_{2}-\lambda g_{1}^{2}|\leq 1-|g_{1}|^{2} + |\lambda||g_{1}|^{2}  \leq \max\{1, |\lambda|\},$$
that is
$$\Big|a_{3}-\frac{3}{4}a^{2}_{2}-\lambda \frac{1}{4}a^{2}_{2}\Big| \leq \max\{1, |\lambda|\}.$$
Hence,
$$|a_{3}-\lambda a_{2}^{2}|\leq \max\{1, |4\lambda-3|\}.$$
The sharpness of the estimate  can be easily
checked from the function given in the theorem.
Now the proof is complete.
\end{proof}

\section{Growth and Distortion Theorems}
In this section, we shall study  the growth  and distortion theorems for   slice regular functions.

From Corollary  \ref{Rogosinski-corollary},  we first present a growth theorem for  the Carath\'{e}odory   class $\mathcal{P}$ which is an improved  version of Theorem \ref{Ren}.
\begin{theorem} \label{sharper-Ren}
Let  $p(q)=1+\sum\limits_{n=1}^\infty q^n a_n$ be a slice regular function in $\mathcal{P}$. Then
\begin{eqnarray}\label{sharper-Ren-inequa}
\frac{1-|q|^{2}}{1+|a_1 q|+|q|^{2}}\leq  {\rm{Re}}p(q)\leq |p(q)|\leq \frac{1+|a_1 q|+|q|^{2}}{1-|q|^{2}}, \qquad\forall\ q\in\mathbb B.
\end{eqnarray}
Moreover,   equality holds for the first or third inequality in $(\ref{sharper-Ren-inequa})$ at some  $q_0\neq 0$ if and only if
$$p(q)=(q\varphi_{a}(q)u+1)\ast (1-q\varphi_{a}(q)u )^{-\ast},\qquad \forall\,\, q\in \mathbb B,$$
for some  $u \in \partial\mathbb{B}$.
\end{theorem}
\begin{proof}
Consider the slice regular function
 $$f(q)= (p(q)+1)^{-\ast}\ast(p(q)-1), \qquad\forall\ q\in\mathbb B.$$

Note that  $f(0)=0, f'(0)=a_{1}/2$ and $|f(q)|<1$ for all $q\in \mathbb{B}$ by Proposition \ref{main-proposition}.
then  it follows that, by Corollary  \ref{Rogosinski-corollary},
\begin{equation}\label{equality-hold}
|f(q)|\leq|q|\frac{|q|+|f'(0)|}{1+|qf'(0)|}=:\mu, \quad \forall\ q\in \mathbb{B}.
\end{equation}
By Proposition  \ref{prop:Quotient Relation}, it follows that
$$ \frac{|p(q)+1|}{|p(q)-1|} \leq \mu, \quad \forall\ q\in \mathbb{B}. $$
i.e.,
 $$\Big|p(q)- \frac{1+\mu^{2}}{1-\mu^{2}}\Big| \leq \frac{2\mu }{1-\mu^{2}}, \quad \forall\ q\in \mathbb{B}.$$
Hence, (\ref{sharper-Ren-inequa}) follows from
$$ |p(q)|\leq \frac{1+\mu^{2}}{1-\mu^{2}}  + \frac{2\mu }{1-\mu^{2}}  =\frac{1+|qa_1|+|q|^{2}}{1-|q|^{2}},  \quad \forall\ q\in \mathbb{B}.$$
and
$$ {\rm{Re}}p(q)\geq\frac{1+\mu^{2}}{1-\mu^{2}} - \frac{2\mu }{1-\mu^{2}}  =\frac{1-|q|^{2}}{1+|qa_1|+|q|^{2}}, \quad \forall\ q\in \mathbb{B}.  $$
If one of equalities  above  occurs  for  some point $q_0\neq 0$, then (\ref{equality-hold}) turns into an equality at $\widetilde{q_0}=T_{p^{c}+1}(q_{0})=(p(q_{0})+1)^{-1} q_{0}(p(q_{0})+1)$.
Whence  Corollary  \ref{Rogosinski-corollary} implies  that
  $$f(q)=q\varphi_{a}(q) u,\quad \forall \,\,q\in \mathbb B,$$
for some $u\in \partial \mathbb{B}$. Thus
$$p(q)=(q\varphi_{a}(q)u+1)\ast (1-q\varphi_{a}(q)u )^{-\ast},\quad \forall \ q\in \mathbb B, $$
for some $u\in \partial \mathbb{B}$.

The converse part can be verified easily by   direct calculations.
Now the proof is complete.
\end{proof}

Now we can establish  the growth and distortion  theorems for $\mathcal{S}^{*}$.
\begin{theorem}\label{growth-distortion-sharp}
Let $f(q)=q+\sum\limits_{n=2}^\infty q^n a_n\in \mathcal{S}^{*}$.    Then the following inequalities hold for all $q \in \mathbb{B}$
 \begin{eqnarray}\label{eq:1-sharp}
 \frac{|q|}{ 1+|a_2q|+|q|^{2} } \leq|f(q)|\leq \frac{|q|}{ 1-|q| ^{2}} \Big(\frac{1+|q|}{ 1-|q| }\Big)^{ \frac{|a_{2}|}{2}};
\end{eqnarray}
\begin{eqnarray}\label{eq:2-sharp}
 \frac{1-|q|^{2}}{(1+|a_2 q|+|q|^{2})^{2}} \leq|f'(q)|\leq \frac{1+|a_2 q|+|q|^{2}}{(1-|q|^{2})^{2}}\Big(\frac{1+|q|}{ 1-|q| }\Big)^{ \frac{|a_{2}|}{2}};
\end{eqnarray}
\begin{equation}\label{eq:1-2-sharp}
 \frac{1-|q|^{2}}{1+|a_2 q|+|q|^{2}}  \leq      \frac{|qf'(q)|}{|f(q)|}\leq
\frac{1+|a_2 q|+|q|^{2}}{1-|q|^{2}}.
\end{equation}
\end{theorem}

\begin{proof}Following the proof of  Theorem \ref{main-2}
 and applying Theorem \ref{sharper-Ren}, we can obtain a combined growth and distortion theorem for    $f\in \mathcal{S}^{*}$ and all $q\in \mathbb{B}$
$$ \frac{1-|q|^{2}}{1+|a_2 q|+|q|^{2}} \leq  {\rm{Re}} \Big( f(q)^{-*}*(qf'(q))\Big) \leq \Big| f(q)^{-*}*(qf'(q))\Big| \leq \frac{1+|a_2 q|+|q|^{2}}{1-|q|^{2}}.$$
Thus, by Proposition \ref{prop:Quotient Relation},
\begin{equation}\label{growth-Hayman-1-sharp}
  \frac{1-|q|^{2}}{1+|a_2 q|+|q|^{2}}\leq  {\rm{Re}} \Big( f(q)^{-1}qf'(q)\Big) \leq  \frac{|qf'(q)|}{|f(q)|}\leq \frac{1+|a_2 q|+|q|^{2}}{1-|q|^{2}},
\end{equation}
which implies   two inequalities in (\ref{eq:1-2-sharp}).

From (\ref{Re-log-m})  and (\ref{growth-Hayman-1-sharp}), we have
 $$ \frac{1-r^{2}}{r(1+|a_2|r+r^{2})} \leq\frac{\partial}{\partial r}\log |f(ru)|  \leq \frac{1+|a_2|r+r^{2}}{r(1-r^{2})},$$
 where $q=ru$ with $r=|q|$.

Integrating  the above inequality  along a
radius from  $r_{1}$ to $r_{2}$ with $0<r_{1} <r_{2}<1$ gives that
\begin{equation}\label{incre-Hayman}
  \log  \frac{r}{1+|a_2|r+r^{2}} \Big|_{r_{1}  }^{r_{2}}  \leq \log |f(ru)| \Big|_{r_{1}  }^{r_{2}}
  \leq \log \frac{r}{(1+r)^{1-\frac{|a_{2}|}{2}}(1-r)^{1+\frac{|a_{2}|}{2}}}
  \Big|_{r_{1}  }^{r_{2}}.
\end{equation}
Equivalently,
$$ \frac{r_{2} }{1+|a_2|r_{2}+r_{2}^{2}} \varepsilon_{1}(r_1) \leq    |f(r_{2}u)| \leq     \frac{ r_{2}   }{ 1- r_{2} ^{2}} \Big(\frac{1+ r_{2} }{ 1- r_{2} }\Big)^{ \frac{|a_{2}|}{2}}
  \varepsilon_{2}(r_1), $$
where
$$\varepsilon_{1}(r_1)=(1+|a_2|r_{1}+r_{1}^{2}) \frac{|f(r_{1}u)| }{r_{1}},  $$
and
$$\varepsilon_{2}(r_1)=  (1- r_{1} ^{2}) \Big(\frac{1- r_{1} }{ 1+r_{1} }\Big)^{ \frac{|a_{2}|}{2}}
\frac{|f(r_{1}u)|}{r_{1}}.  $$
Letting $r_{1}\rightarrow 0^{+}$, (\ref{eq:1-sharp}) follows from
$$ \frac{r_{2} }{1+|a_2|r_{2}+r_{2}^{2}}   \leq    |f(r_{2}u)| \leq     \frac{ r_{2}   }{ 1- r_{2} ^{2}} \Big(\frac{1+ r_{2} }{ 1- r_{2} }\Big)^{ \frac{|a_{2}|}{2}} . $$

Now (\ref{eq:2-sharp}) is a consequence of (\ref{eq:1-sharp}) and (\ref{eq:1-2-sharp}).    The proof is complete.
\end{proof}

Note that $|a_{2}|\leq2$ provided  $f(q)=q+\sum\limits_{n=2}^\infty q^n a_n\in \mathcal{S}^{*}$. Hence we immediately obtain the following growth and distortion theorem for $\mathcal{S}^{*}$.
\begin{theorem}\label{growth-distortion}
Let $f\in \mathcal{S}^{*}$.    Then the following inequalities hold for all $q \in \mathbb{B}$
 \begin{eqnarray}\label{eq:1}
 \frac{|q|}{(1+|q|)^{2}} \leq|f(q)|\leq \frac{|q|}{(1-|q|)^{2}};
\end{eqnarray}
\begin{eqnarray}\label{eq:2}
 \frac{1-|q|}{(1+|q|)^{3}}\leq|f'(q)|\leq \frac{1+|q|}{(1-|q|)^{3}};
\end{eqnarray}
\begin{equation}\label{eq:1-2}
  \frac{1-|q|}{1+|q|}\leq      \frac{|qf'(q)|}{|f(q)|}\leq \frac{1+|q|}{1-|q|}.
\end{equation}
All of these estimates are sharp. For each $q\in \mathbb{B}\setminus\{0\}$, equality occurs if
 $$f(q)=q(1-qu)^{-\ast2}, \quad   q\in \mathbb{B},$$
 for some $u\in \partial \mathbb{B}$.
   \end{theorem}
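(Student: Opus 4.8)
The plan is to reduce the whole statement to a single master inequality bounding $f(q)^{-1}qf'(q)$ by the Carath\'eodory estimates of Theorem \ref{Ren}, from which all three pairs of inequalities drop out. I would first set $G(q)=q^{-1}f(q)$, which is slice regular with $G(0)=1$ and $\mathcal Z_G=\varnothing$ (since $\mathcal Z_f=\{0\}$), and put $p(q)=G^{-\ast}\ast f'(q)$; by Lemma \ref{main-lemma}, exactly as in the proof of Theorem \ref{main}, we have $p\in\mathcal P$. The essential move is the pointwise rewriting afforded by Proposition \ref{prop:Quotient Relation}:
\[ p(q)=G(T_G(q))^{-1}f'(T_G(q))=f(T_G(q))^{-1}\,T_G(q)\,f'(T_G(q)), \]
where $T_G(q)=G^c(q)^{-1}qG^c(q)$ is a modulus-preserving diffeomorphism. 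Writing $w=T_G(q)$, this becomes $f(w)^{-1}wf'(w)=p(T_G^{-1}(w))$ with $|T_G^{-1}(w)|=|w|$. Applying Theorem \ref{Ren} to $p$ at the point $T_G^{-1}(w)$, whose modulus equals $|w|$, yields the master inequality
\[ \frac{1-|q|}{1+|q|}\le {\rm Re}\big(f(q)^{-1}qf'(q)\big)\le\big|f(q)^{-1}qf'(q)\big|\le\frac{1+|q|}{1-|q|},\qquad q\in\mathbb B. \]

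From the master inequality the three conclusions come out quickly. Since $|f(q)^{-1}qf'(q)|=|qf'(q)|/|f(q)|$, the outer two bounds are precisely (\ref{eq:1-2}). For the growth theorem (\ref{eq:1}) I combine the real-part bound with identity (\ref{Re-log-m}), which reads $r\,\partial_r\log|f(ru)|={\rm Re}(f(q)^{-1}qf'(q))$ for $q=ru$, $u\in\partial\mathbb B$; subtracting $1/r$ gives
\[ \frac{-2}{1+r}\le \partial_r\log\frac{|f(ru)|}{r}\le\frac{2}{1-r}, \]
and integrating from $0$ to $|q|$ (the boundary term at $0$ being zero because $|f(ru)|/r\to1$) produces $|q|/(1+|q|)^2\le|f(q)|\le|q|/(1-|q|)^2$. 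Finally, writing $|f'(q)|=|q|^{-1}\cdot(|qf'(q)|/|f(q)|)\cdot|f(q)|$ and multiplying the matching upper (resp.\ lower) bounds from (\ref{eq:1-2}) and (\ref{eq:1}) gives the distortion estimates (\ref{eq:2}). Sharpness of all six inequalities is verified by direct substitution of $f(q)=q(1-qu)^{-\ast2}$, whose associated Carath\'eodory function $p$ is the extremal $(1-qu)^{-\ast}\ast(1+qu)$.

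The hard part is transferring the \emph{pointwise} Carath\'eodory bounds of Theorem \ref{Ren} to $f(q)^{-1}qf'(q)$. In one complex variable $zf'(z)/f(z)$ is itself a Carath\'eodory function, but over $\mathbb H$ the non-commutativity prevents any such direct identification: one must route through the regular reciprocal $G^{-\ast}$ and the diffeomorphism $T_G$. The estimates survive only because $T_G$ preserves moduli, so that Theorem \ref{Ren}, which controls ${\rm Re}\,p$ and $|p|$ solely through $|z|$, may be invoked at a point of the same modulus as $q$. Setting up this modulus-preserving identity cleanly, and checking that both the real-part bound (needed for growth) and the modulus bound (needed for (\ref{eq:1-2}) and distortion) transfer simultaneously, is the technical heart of the argument.
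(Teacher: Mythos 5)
Your proposal is correct and follows essentially the same route as the paper: both derive the master inequality $\frac{1-|q|}{1+|q|}\le {\rm Re}\big(f(q)^{-1}qf'(q)\big)\le |qf'(q)|/|f(q)|\le \frac{1+|q|}{1-|q|}$ by applying Theorem \ref{Ren} to the Carath\'eodory function $p$ and transferring pointwise via the modulus-preserving map $T$ (the paper packages this transfer inside Proposition \ref{main-proposition}, which you unpack explicitly), then obtain (\ref{eq:1}) by integrating the logarithmic derivative identity (\ref{Re-log-m}) and (\ref{eq:2}) by combining (\ref{eq:1}) with (\ref{eq:1-2}). The only cosmetic difference is that the paper integrates from $r_1$ to $r_2$ and lets $r_1\to 0^+$, whereas you integrate from $0$ after normalizing by $1/r$; these are equivalent.
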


As  a consequence of (\ref{eq:1}), Theorem \ref{convex-distortion} follows.    Now we establish a more general version.
\begin{theorem}\label{convex-algebra}  Let  $m $ be a positive integer  and $f(q) = q+\sum_{n=m+1}^{+\infty} q^{n}a_{n} $ be a slice regular function on $\mathbb{B}$ such that $qf '(q)\in \mathcal{S}^{*}$. Then
$$\frac{1 }{(1+|q|^{m})^{2/m}}\leq |f'(q)|\leq \frac{1 }{(1-|q|^{m})^{2/m}},\quad \forall \ q \in \mathbb{B}.$$
 \end{theorem}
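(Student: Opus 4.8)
The plan is to reduce everything to a sharpened growth estimate for the auxiliary function $g:=qf'$. Since $f(q)=q+\sum_{n=m+1}^{+\infty}q^na_n$, a term-by-term computation gives $g(q)=qf'(q)=q+\sum_{n=m+1}^{+\infty}q^n(na_n)$, so $g$ lies in $\mathcal{S}^{*}$ (by hypothesis) and its Taylor coefficients $b_2,\dots,b_m$ all vanish. Because $|g(q)|=|q|\,|f'(q)|$, it suffices to prove the enhanced growth bound
$$\frac{|q|}{(1+|q|^m)^{2/m}}\leq |g(q)|\leq\frac{|q|}{(1-|q|^m)^{2/m}},\qquad\forall\,q\in\mathbb{B},$$
and then divide by $|q|$ (the value at $q=0$ being $|f'(0)|=1$). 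This parallels the proof of Theorem \ref{growth-distortion}, the only new ingredient being a version of the Carath\'{e}odory estimate that exploits the vanishing of the low-order coefficients.

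The engine is the following sharpened Carath\'{e}odory estimate: if $P\in\mathcal{P}$ has the form $P(q)=1+\sum_{n=m}^{+\infty}q^nP_n$ (i.e.\ $P^{(k)}(0)=0$ for $1\leq k\leq m-1$), then
$$ {\rm{Re}}\,P(q)\geq\frac{1-|q|^m}{1+|q|^m},\qquad |P(q)|\leq\frac{1+|q|^m}{1-|q|^m},\qquad\forall\,q\in\mathbb{B}. $$
I would apply this to $P:=(q^{-1}g)^{-\ast}\ast g'=g^{-\ast}\ast(qg')$, which belongs to $\mathcal{P}$ by Lemma \ref{main-lemma}; since $q^{-1}g=1+O(q^m)$ and $g'=1+O(q^m)$, the $\ast$-quotient satisfies $P=1+O(q^m)$, so the hypothesis on $P$ holds.

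To establish the sharpened estimate, set $\psi:=(P+1)^{-\ast}\ast(P-1)$. As ${\rm{Re}}\,P>0$ is equivalent to $|P-1|<|P+1|$ pointwise, Proposition \ref{main-proposition} shows $\psi$ is a slice regular self-map of $\mathbb{B}$; moreover $\psi(0)=0$ and, as $P-1=O(q^m)$ while $(P+1)^{-\ast}$ has nonzero constant term, $\psi=O(q^m)$, i.e.\ $\psi^{(k)}(0)=0$ for $k\leq m-1$. The Schwarz lemma (Theorem \ref{generalized-Schwarz-lemma}) then yields $|\psi(q)|\leq|q|^m$. The main obstacle is inverting this noncommutative Cayley transform to recover bounds on $P$: here I would use Proposition \ref{prop:Quotient Relation} to write $\psi(q)=(P(q')+1)^{-1}(P(q')-1)$ with $q'=T_{P+1}(q)$, so that at the single point $q'$ all quantities lie in one slice $\mathbb{C}_I$ and $\psi(q)=\frac{P(q')-1}{P(q')+1}$ is a genuine complex quotient obeying $|\,\cdot\,|\leq|q|^m=|q'|^m$. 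Since $T_{P+1}$ preserves the modulus and maps each sphere $\{|q|=r\}$ bijectively onto itself, the classical half-plane geometry of $w\mapsto\frac{w-1}{w+1}$ gives, at every point, ${\rm{Re}}\,P\geq\frac{1-|q|^m}{1+|q|^m}$ and $|P|\leq\frac{1+|q|^m}{1-|q|^m}$.

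Finally I would transfer these to $g$. By Proposition \ref{prop:Quotient Relation}, $P(q)=g(q')^{-1}q'g'(q')$ with $q'=T_g(q)$, $|q'|=|q|$, and $T_g$ a modulus-preserving bijection of each sphere; hence the bounds on ${\rm{Re}}\,P$ and $|P|$ transfer to ${\rm{Re}}\,\big(g(q)^{-1}qg'(q)\big)\geq\frac{1-|q|^m}{1+|q|^m}$ and $\frac{|qg'(q)|}{|g(q)|}\leq\frac{1+|q|^m}{1-|q|^m}$ for all $q$. Combining with the identity (\ref{Re-log-m}), for $q=ru$ one gets
$$\frac{1-r^m}{r(1+r^m)}\leq\frac{\partial}{\partial r}\log|g(ru)|\leq\frac{1+r^m}{r(1-r^m)}.$$
Using the antiderivatives $\int\frac{1-r^m}{r(1+r^m)}\,dr=\log\frac{r}{(1+r^m)^{2/m}}$ and $\int\frac{1+r^m}{r(1-r^m)}\,dr=\log\frac{r}{(1-r^m)^{2/m}}$, I would integrate from $r_1$ to $r_2$ and let $r_1\to0^+$, where $|g(r_1u)|/r_1\to1$, to obtain the enhanced growth bound for $g$; dividing by $|q|$ finishes the proof. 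The delicate point throughout is the noncommutative Cayley inversion in the third paragraph, which the pointwise-quotient formula of Proposition \ref{prop:Quotient Relation} reduces to classical one-variable geometry.
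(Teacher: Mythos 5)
Your proof is correct and is essentially the paper's own argument in different packaging: your Cayley transform $(P+1)^{-\ast}\ast(P-1)$ of $P=g^{-\ast}\ast(qg')$ with $g=qf'$ is, after clearing denominators, exactly the paper's auxiliary function $(2f'(q)+qf''(q))^{-\ast}\ast(qf''(q))$, and the order-$m$ Schwarz lemma, the pointwise disk bound recovered via $T$-maps, and the integration of the radial logarithmic derivative all coincide (your bound on $\partial_r\log|qf'(ru)|$ differs from the paper's bound on $\partial_r\log|f'(ru)|$ only by the additive term $1/r$).
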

 \begin{proof}
Denote the slice regular function
$$g(q)=f'(q)^{-\ast} \ast( qf''(q)), \quad \forall \ q \in \mathbb{B}.
$$
From the assumption of $f$, we have
$$ {\rm{Re}} \Big( f'(q) ^{-1} qf''(q) \Big) >-1, \quad \forall \ q \in \mathbb{B},$$
then, by Proposition \ref{main-proposition}, $g$ is slice regular on $\mathbb{B}$ with ${\rm{Re}}g(q)>-1$ for all $q\in \mathbb{B}$.
Then
  \begin{eqnarray}\label{B-B}
 |(2+g(q))^{-1}g(q)|<1, \quad \forall \ q \in \mathbb{B}.
\end{eqnarray}

Consider  the slice regular function
$$h(q)=(2 +g(q))^{-\ast} \ast g(q) = (2f'(q)+qf''(q))^{-\ast} \ast( qf''(q)), \quad \forall \ q \in \mathbb{B}, $$
with $h(0)=h'(0)=\ldots =h^{(m-1)}(0)=0$ and $|h(q)|<1$  on $ \mathbb{B}$  by  Proposition \ref{main-proposition} again and (\ref{B-B}).
Whence the Schwarz lemma for slice regular functions in Theorem \ref{generalized-Schwarz-lemma} implies that $|h(q)|\leq|q|^{m}$ for all $q \in \mathbb{B}$. Then
$$|h\circ T_{2+g^{c}}(q)|\leq|T_{2+g^{c}}(q)|^{m}=|q |^{m}, \quad  \forall  \ q \in \mathbb{B}.$$
 That is, by  Proposition  \ref{prop:Quotient Relation},
$$ |(2+g(q))^{-1}g(q)| \leq |q |^{m}, \quad \forall \ q \in \mathbb{B}, $$
equivalently,
$$ \Big| g(q) -\frac{2r^{2m}}{1-r^{2m}}\Big| \leq \frac{2r^{ m}}{1-r^{2m}}, \quad |q|=r<1,$$
which implies that
$$ \Big| g\circ T_{(f')^{c}}(q) -\frac{2r^{2m}}{1-r^{2m}}\Big| \leq \frac{2r^{ m}}{1-r^{2m}}, \quad |q|=r<1,$$
i.e., by  Proposition  \ref{prop:Quotient Relation},
$$ \Big| f'(q) ^{-1} qf''(q)-\frac{2r^{2m}}{1-r^{2m}}\Big| \leq \frac{2r^{ m}}{1-r^{2m}}, \quad |q|=r<1.$$
Specially,
 $$ \Big|{\rm{Re}} \Big( f'(q) ^{-1} qf''(q)\Big)-\frac{2r^{2m}}{1-r^{2m}}\Big| \leq \frac{2r^{ m}}{1-r^{2m}}, \quad |q|=r<1.$$
This together with (\ref{Re-log-m}) yields that
 $$ -\frac{2r^{m-1 }}{1+r^{ m}} \leq \frac{\partial}{\partial r}\log | f' (ru)| \leq \frac{2r^{m-1}}{1-r^{ m}}, \quad  q =ru.$$
 Integrating  the above inequality   from  $0$ to $r$  gives
   $$ -\frac{2}{m}\log(1+r^{ m}) \leq \log | f' (ru)| \leq -\frac{2}{m}\log(1-r^{ m}), \quad  q =ru,$$
as  desired.
\end{proof}

 From Theorem  \ref{convex-algebra}, we deduce a precise growth theorem  for $\mathcal{S}^{*}$.
 \begin{theorem}\label{starlike-algebra-higher}
 Let  $f(q) = q+\sum_{n=m+1}^{+\infty} q^{n}a_{n}  \in \mathcal{S}^{*}$ for some positive integer $m $. Then$$\frac{|q|}{(1+|q|^{m})^{2/m}}\leq |f (q)|\leq \frac{|q| }{(1-|q|^{m})^{2/m}},\quad \forall \ q \in \mathbb{B}.$$
 \end{theorem}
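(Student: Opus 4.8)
The plan is to deduce this growth theorem directly from the distortion estimate of Theorem \ref{convex-algebra}, so that no new analysis is needed. Given $f(q) = q + \sum_{n=m+1}^{+\infty} q^{n}a_{n} \in \mathcal{S}^{*}$, I would introduce the slice regular function
$$g(q) = q + \sum_{n=m+1}^{+\infty} q^{n}\frac{a_{n}}{n}, \qquad q \in \mathbb{B}.$$
Because $|a_{n}/n| \leq |a_{n}|$, the radius of convergence of this series is at least that of $f$, so $g$ is a well-defined slice regular function on $\mathbb{B}$ which shares the gap structure $g(q) = q + \sum_{n=m+1}^{+\infty} q^{n}b_{n}$ demanded by Theorem \ref{convex-algebra}.

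The construction is tailored so that $g$ is the relevant antiderivative of $q^{-1}f$. Differentiating term by term gives $g'(q) = 1 + \sum_{n=m+1}^{+\infty} q^{n-1}a_{n} = q^{-1}f(q)$, hence $qg'(q) = f(q)$. In particular $qg'(q) = f \in \mathcal{S}^{*}$, so $g$ satisfies the hypotheses of Theorem \ref{convex-algebra}; and since the quaternionic modulus is multiplicative, $|g'(q)| = |q^{-1}f(q)| = |f(q)|/|q|$ for every $q \in \mathbb{B}\setminus\{0\}$.

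Applying Theorem \ref{convex-algebra} to $g$ then yields
$$\frac{1}{(1+|q|^{m})^{2/m}} \leq \frac{|f(q)|}{|q|} \leq \frac{1}{(1-|q|^{m})^{2/m}}, \qquad q \in \mathbb{B}\setminus\{0\},$$
and multiplying through by $|q|$ gives the asserted estimate, the case $q=0$ being trivial. Since all the genuine work is already contained in Theorem \ref{convex-algebra}, I do not anticipate a serious obstacle: the only points meriting care are the convergence of the modified series defining $g$ and the slice-wise verification of $g'(q) = q^{-1}f(q)$, both of which are routine. The conceptual step is really recognizing that this antiderivative construction converts the distortion bound into the growth bound; after that the computation is immediate.

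For completeness, a more self-contained route would avoid Theorem \ref{convex-algebra} entirely: prove a sharpened Carath\'{e}odory estimate $\frac{1-|q|^{m}}{1+|q|^{m}} \leq {\rm{Re}}\, p(q) \leq |p(q)| \leq \frac{1+|q|^{m}}{1-|q|^{m}}$ for $p = (q^{-1}f)^{-\ast}\ast f' \in \mathcal{P}$ (its associated Schwarz function $(p+1)^{-\ast}\ast(p-1)$ vanishes to order $m$, so Theorem \ref{generalized-Schwarz-lemma} applies), and then integrate $r\frac{\partial}{\partial r}\log|f(ru)| = {\rm{Re}}\,(f(q)^{-1}qf'(q))$ exactly as in the proof of Theorem \ref{growth-distortion}. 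This reproduces the same bounds, but the reduction above is considerably shorter.
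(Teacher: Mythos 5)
Your proposal is correct and matches the paper's (essentially one-line) derivation: the paper simply states that the theorem is deduced from Theorem \ref{convex-algebra}, and your antiderivative construction $g$ with $qg'(q)=f(q)$, $|g'(q)|=|f(q)|/|q|$ is exactly the intended reduction, with the routine points (convergence, gap structure, the trivial case $q=0$) correctly handled. The alternative self-contained route you sketch is also sound but unnecessary here.
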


In fact, estimates on  the right side of (\ref{eq:1}) and (\ref{eq:2}) can be directly obtained from   coefficient estimates in  Theorem  \ref{main-2}.  Equality on the right side of   (\ref{eq:1}) or (\ref{eq:2}) occurs for some $q_{0}\in \mathbb{B} \setminus\{0\}$, if and only if
 $$f(q)=q(1-qu)^{-\ast2}, \quad   q\in \mathbb{B},$$
 for some $u\in \partial \mathbb{B}$.

Moreover, those two  estimates hold in a   larger class. Let $f, g$ be two slice regular functions on $\mathbb{B}$. Write $f \prec_{\mathcal{N}} g$ if there exists $w\in \mathcal{N}(\mathbb{B} )$ with $w(0) = 0$ and $|w(q)|<1$  for all $q \in  \mathbb{B}$ such that $f (q) =  g ( w(q))$ for all $q\in \mathbb{B}$ (see   \cite[Definition 2.10]{Gal-J-Sabadini}). Note that the composition of  slice regular functions, when defined, does not give    generally a slice regular function. However, if
$ w \in\mathcal{N} (\mathbb{B}),$  then the composition $f\circ  w$ is slice regular for any slice regular function $f$.
Since  the proof of  Theorem \ref{growth-distortion-suborinate} below  is similar to the classical case (see e.g., \cite[p. 202]{Duren}), we omit it here.
\begin{theorem}\label{growth-distortion-suborinate}
Let $f, g$ be two slice regular functions on $\mathbb{B}$.  If  $f  \in \mathcal{C}$ and $g \prec_{\mathcal{N}} f$, then the following inequalities hold for all $q \in \mathbb{B}$
 $$|g(q)|\leq \frac{|q|}{(1-|q|)^{2}}\  \  \mathrm {and } \  \
 |g'(q)|\leq \frac{1+|q|}{(1-|q|)^{3}}.$$
\end{theorem}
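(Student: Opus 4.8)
The plan is to exploit the defining feature of $\mathcal{N}$-subordination: writing $g=f\circ w$ with $w\in\mathcal{N}(\mathbb{B})$, $w(0)=0$ and $|w(q)|<1$, the restriction $w_I$ to any slice $\mathbb{C}_I$ is a genuine holomorphic self-map of the disk $\mathbb{B}_I\cong\mathbb{D}$, since $w(\mathbb{C}_I)\subseteq\mathbb{C}_I$ (in fact $w$ has real power-series coefficients). This reduces every estimate to a one-variable argument on a fixed slice, exactly as in the classical proof. Before starting I would record the two upper bounds needed for $f$ itself: since $f\in\mathcal{C}$, Theorem~\ref{main-2} gives $|a_n|\le n$, whence term-by-term estimation of the power series yields $|f(q)|\le\sum_{n\ge1}n|q|^n=\frac{|q|}{(1-|q|)^2}$ and $|f'(q)|\le\sum_{n\ge1}n^2|q|^{n-1}=\frac{1+|q|}{(1-|q|)^3}$ for all $q\in\mathbb{B}$.

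First I would prove the growth estimate. Because $w\in\mathcal{N}(\mathbb{B})$ the composition $g=f\circ w$ is slice regular and $|g(q)|=|f(w(q))|$. Applying the growth bound for $f$ at the point $w(q)\in\mathbb{B}$ gives $|g(q)|\le\frac{|w(q)|}{(1-|w(q)|)^2}$. The Schwarz lemma for slice regular self-maps of $\mathbb{B}$ (Theorem~\ref{generalized-Schwarz-lemma} with $m=1$) gives $|w(q)|\le|q|$, and since $t\mapsto t/(1-t)^2$ is increasing on $[0,1)$ we conclude $|g(q)|\le\frac{|q|}{(1-|q|)^2}$.

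For the distortion estimate I would first establish the chain rule $|g'(q)|=|w'(q)|\,|f'(w(q))|$. On a slice $\mathbb{C}_I$ one has $g_I=f_I\circ w_I$ with $w_I$ taking values in $\mathbb{C}_I$; since the slice derivative agrees with the holomorphic derivative along the slice and $w'(q)\in\mathbb{C}_I$, the one-variable difference-quotient computation yields $g'(q)=w'(q)\,f'(w(q))$ (the factors need not commute, but their moduli multiply). Next, since $w_I:\mathbb{B}_I\to\mathbb{B}_I$ is an ordinary holomorphic disk self-map, the classical Schwarz--Pick inequality~(\ref{classical-Schwarz-Pick}) applies on the slice and gives $|w'(q)|\le\frac{1-|w(q)|^2}{1-|q|^2}$. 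Combining this with the distortion bound for $f$ produces, with $\rho=|w(q)|$ and $r=|q|$,
$$|g'(q)|\le\frac{1-\rho^2}{1-r^2}\cdot\frac{1+\rho}{(1-\rho)^3}=\frac{(1+\rho)^2}{(1-r^2)(1-\rho)^2}.$$
Since $\rho\le r$ and $t\mapsto\frac{1+t}{1-t}$ is increasing, one checks $\frac{(1+\rho)^2}{(1-\rho)^2}\le\frac{(1+r)^2}{(1-r)^2}$, which after multiplying through by $1-r^2=(1-r)(1+r)$ gives exactly $|g'(q)|\le\frac{1+|q|}{(1-|q|)^3}$.

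The term-by-term growth estimates and the final algebraic comparison are elementary; the genuine content, and the point requiring care, is the justification that the classical Schwarz--Pick inequality and the one-variable chain rule are available at all. Both hinge on the hypothesis $w\in\mathcal{N}(\mathbb{B})$: it is precisely this condition that forces $w_I$ to be a holomorphic self-map of the disk $\mathbb{B}_I$ (so that~(\ref{classical-Schwarz-Pick}), which the excerpt notes \emph{fails} for general slice regular self-maps, becomes applicable) and that makes $f\circ w$ slice regular with a well-behaved slice derivative. I would therefore devote the main effort to these two reductions, after which the inequalities follow as in the complex case.
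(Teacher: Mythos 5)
Your proof is correct and follows exactly the classical subordination argument that the paper itself invokes (it omits the proof, citing Duren p.~202): term-by-term growth/distortion bounds for $f$ from Theorem~\ref{main-2}, the Schwarz lemma to get $|w(q)|\leq|q|$, and the slice-wise chain rule plus the classical Schwarz--Pick inequality for $w_I$, all legitimized by $w\in\mathcal{N}(\mathbb{B})$ having real coefficients. The reductions you flag as the genuine content are indeed the only points needing care, and you handle them correctly.
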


 In fact, we can obtain more information related to (\ref{eq:1}). Note that  the author  in \cite{Hayman} established the Hayman's regularity theorem  making use of complex version of Theorem \ref{Hayman-H} for the $p$-univalent holomorphic function.
\begin{theorem}\label{Hayman-H}
Let $f \in\mathcal{S}^{*}$ and $M_{\infty}(r,f)=\max_{|q|=r}|f(q)|$ for $r\in (0, 1)$.
Then the function  $\phi(r)=  \frac{1}{r}(1-r)^{2}M_{\infty}(r,f)$
 is   decreasing on $(0,1)$ and hence tends to a limit  $\alpha\in [0,1]$.
 For any $\alpha\in [0,1]$, there exists $u \in \partial\mathbb{B}$ such that
 $$\lim _{r\nearrow 1^{-}}(1-r)^{2} |f(ru)|=\alpha.$$
\end{theorem}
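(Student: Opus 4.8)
The plan is to reduce everything to the radial monotonicity already encoded in Theorem \ref{growth-distortion}. First I would fix $u \in \partial\mathbb{B}$ and introduce the auxiliary quantity
$$g(r,u)=\frac{(1-r)^{2}}{r}\,|f(ru)|, \qquad r\in(0,1).$$
Since $f\in\mathcal{S}^{*}$ has $\mathcal{Z}_{f}=\{0\}$, the value $f(ru)$ never vanishes for $r>0$, so the chain of inequalities (\ref{incre-Hayman}) applies; its right-hand estimate, for $0<r_{1}<r_{2}<1$, rearranges exactly to $g(r_{2},u)\leq g(r_{1},u)$. Hence $r\mapsto g(r,u)$ is (weakly) decreasing on $(0,1)$ for every fixed $u$.

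Next I would observe that $\phi(r)=\frac{(1-r)^{2}}{r}M_{\infty}(r,f)=\max_{u\in\partial\mathbb{B}}g(r,u)$, the maximum being attained because $\partial\mathbb{B}\cong S^{3}$ is compact and $f$ is continuous. A pointwise maximum of a family of decreasing functions is itself decreasing: for $r_{1}<r_{2}$, choosing $u^{*}$ with $g(r_{2},u^{*})=\phi(r_{2})$ gives $\phi(r_{2})=g(r_{2},u^{*})\leq g(r_{1},u^{*})\leq\phi(r_{1})$. Thus $\phi$ is decreasing and, being nonnegative, converges to some $\alpha\geq0$ as $r\nearrow1^{-}$. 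The upper growth bound (\ref{eq:1}) yields $g(r,u)\leq1$ for all $r,u$, so $\phi(r)\leq1$ and therefore the limit satisfies $\alpha\in[0,1]$.

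Finally, for the direction of maximal growth I would run a compactness argument. For each $n$ pick $r_{n}\nearrow1$ and $u_{n}\in\partial\mathbb{B}$ with $g(r_{n},u_{n})=\phi(r_{n})$, and extract, using compactness of $\partial\mathbb{B}$, a subsequence $u_{n_{k}}\to u$. For any fixed $\rho\in(0,1)$ and all large $k$ one has $r_{n_{k}}>\rho$, so radial monotonicity gives $g(\rho,u_{n_{k}})\geq g(r_{n_{k}},u_{n_{k}})=\phi(r_{n_{k}})$; letting $k\to\infty$ and using continuity of $u\mapsto g(\rho,u)$ together with $\phi(r_{n_{k}})\to\alpha$ yields $g(\rho,u)\geq\alpha$. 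Since this holds for every $\rho<1$, while $g(\cdot,u)$ is decreasing and $g(\rho,u)\leq\phi(\rho)\to\alpha$, the decreasing limit is squeezed to $\lim_{\rho\to1^{-}}g(\rho,u)=\alpha$; as $\frac{1}{\rho}\to1$ this is the same as $\lim_{r\to1^{-}}(1-r)^{2}|f(ru)|=\alpha$. The main obstacle is precisely this last step: producing a single limiting direction that realizes the value $\alpha$, where the interplay between radial monotonicity (fixed $u$) and the sphere-maximum (fixed $r$) must be managed so that the limiting direction inherits the full limit $\alpha$ rather than a strictly smaller value.
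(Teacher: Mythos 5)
Your argument is correct and follows essentially the same route as the paper: both rest on the radial monotonicity of $r\mapsto \frac{(1-r)^{2}}{r}|f(ru)|$ coming from (\ref{incre-Hayman}), deduce the monotonicity and range of $\phi$ from it together with (\ref{eq:1}), and then produce the extremal direction by taking maximizing directions $u_{n}$ at radii $r_{n}\nearrow 1$, extracting a limit point on the compact sphere, and squeezing. The only differences are presentational (your explicit remark that a maximum of decreasing functions is decreasing, and your slightly more careful subsequence extraction), so nothing further is needed.
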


 \begin{proof}
From  the proof in Theorem \ref{growth-distortion}, we have
\begin{equation}\label{incre-Hayman}
  \frac{ |f(r_{2}u)|}{|f(r_{1}u)| }  \leq \frac{ r_{2}(1-r_{1})^{2}}{r_{1}(1-r_{2})^{2}}.
\end{equation}
Choose $u\in \partial\mathbb{B}$ such  that $M_{\infty}(r_{2},f)= |f(r_{2}u)|$. Whence the above inequality yields that
$$  \frac{1}{r_{2}}(1-r_{2})^{2}M_{\infty}(r_{2},f)  \leq \frac{1}{r_{1}}(1-r_{1})^{2} |f(r_{1}u)| \leq\frac{1}{r_{1}}(1-r_{1})^{2}M_{\infty}(r_{1},f),$$
that is  to say $\phi(r)$ decreases   on $(0,1)$ and hence tends to a limit  $\alpha$. From (\ref{eq:1}), there holds that $ \phi(r)\in [0,1]$, thus $\alpha\in [0,1]$.

Let $\{r_{n}\}$ be a sequence  increasing to $1$ and choose $u_{n}\in \partial\mathbb{B}$ such  that $M_{\infty}(r_{n},f)= |f(r_{n}u_{n})|$.  From   the compactness of $\partial\mathbb{B}$, there exists a cluster point $u_{\infty} \in \partial\mathbb{B}$.
From (\ref{incre-Hayman}), we have, for $r<r_{n}$,
$$ \alpha\leq \frac{1}{ r_{n}}(1-r_{n})^{2} |f(r_{n}u_{n})| \leq \frac{1}{ r}(1-r)^{2} |f(ru_{n})|.$$
Letting $n\rightarrow \infty$, we obtain
$$ \alpha\leq \frac{1}{ r}(1-r)^{2} |f(ru_{\infty})| \leq \frac{1}{ r}(1-r)^{2}M_{\infty}(r ,f),$$
as desired.

\end{proof}

\section{Radius Problems}
In this section,  we  consider  some   radius problems for slice regular functions.
\subsection{Bohr  Theorem}
Based on Lemma \ref{Schwarz-Pick-Cor}, the authors in \cite{Rocchetta-Gentili-Sarfatti}   established   the quaternionic analog of the sharp version of the Bohr theorem saying that

Let $f(q) = \sum _{n=0}^{+\infty} q^{n}a_n $ be a slice regular function on $\mathbb{B}$, continuous on the closure $\mathbb{B}$, such that $|f(q)|< 1$ for all $|q| \leq1$. Then
  \begin{eqnarray} \label{Bohr-theorem}
\sum _{n=0}^{+\infty}|q^{n}a_n|<1, \quad   \ |q| \leq \frac{1 }{3}.
\end{eqnarray}
Moreover $1/3$ is the largest radius for which the statement is true.

 For various  generalizations and variants of the Bohr theorem,
 see e.g., \cite{Muhanna,Bayart,Defant,Hamada} and references therein.

Inequality (\ref{Bohr-theorem}) also can be  written  as
 $$\sum _{n=1}^{+\infty}|q^{n}a_n| < {\rm{dist}}\,(f(0), \partial \mathbb{B}), \quad   \ |q| \leq \frac{1 }{3}.$$

From this view, we can give the  Bohr theorem in a more general setting as follows.
\begin{theorem}\label{Bohr}
Let   $f(q) = \sum _{n=0}^{+\infty} q^{n}a_n $ be a slice regular function on $\mathbb{B}$   such that  $f(q)\in \Pi:=\{q\in \mathbb{H}: {\rm{Re}}\, q  \leq 1 \}$ for all $ q\in \mathbb{B}$. Then
\begin{eqnarray} \label{Bohr-theorem-hyperbolic}
 \sum _{n=1}^{+\infty}|q^{n}a_n| \leq {\rm{dist}}\,(f(0), \partial \Pi), \quad   \ |q| \leq \frac{1 }{3}.
\end{eqnarray}
\end{theorem}

\begin{proof}
Under the condition of Theorem \ref{Bohr}, it follows that (see  \cite[Theorem 4]{Ren})
$$|a_{n}|\leq 2\big(1-{\rm{Re}}\, f(0)\big), \quad n =1, 2, 3, \ldots,$$
which implies that
$$\sum _{n=1}^{+\infty}|q^{n}a_n| \leq 2\big(1-{\rm{Re}}\, f(0)\big)\sum _{n=1}^{+\infty}|q|^{n} \leq 1-{\rm{Re}}\, f(0) = {\rm{dist}}\,(f(0), \partial \Pi), \quad   \ |q| \leq \frac{1 }{3},$$
as desired.
 \end{proof}

\subsection{Rogosinski Theorem}
The well-known theorem of Rogosinski asserts that if the modulus of the holomorphic function on $\mathbb{D}$ is less than $1$, then all  partial sums of its power series are less than $1$ on $\{z\in \mathbb{C}: |z|<\frac{1}{2}\} $ \cite{Rogosinski}.    This theorem has been   generalized    to holomorphic mappings of the open unit ball with values in  an arbitrary convex domain in \cite{Aizenberg} by using the following crucial step.
\begin{lemma}\label{Rogosinski-step}
 Let $g_{m,r}(z) =  \sum _{n=0}^{m}r^{n}z^{n}, z\in  \mathbb{D}, r\in \mathbb{R}$. Then
$${\rm{Re}}\, g_{m,r}(z)\geq \frac{1 }{2}$$
for all $z\in  \mathbb{D}$ and $r \in(0, r_{m})$, where $r_{1} = 1/2, r_{2 }=  \sqrt{3/8}$, and $r_{m} $ for
$m  \geq3$ is the unique positive solution of the equation
$$1-r-2r^{m+1}=0.$$
Moreover, $$1-\frac{\log m}{m}<r_{m}<1-\frac{1}{m},\quad m\geq3.$$
 \end{lemma}

Based on Lemma \ref{Rogosinski-step}, we now establish  the quaternionic analogy of  the Rogosinski theorem as follows.
\begin{theorem}\label{Rogosinski-radii}
Let $f=g+\overline{h}: \mathbb{B}\rightarrow \mathbb{H}$  be   such that the image $f(\mathbb{B})$ is  convex,   where $g(q) = \sum _{n=0}^{+\infty} q^{n}a_n $ and $h(q) = \sum _{n=0}^{+\infty} q^{n}b_n$ are   slice regular functions on $\mathbb{B}$. Then   each  partial sum
$$ S_{m}(q)=\sum _{n=0}^{m} q^{n}a_n+ \overline{\sum _{n=0}^{m} q^{n}b_n } $$
maps $B(0,r_{m})$ into $f(\mathbb{B})$, where $r_{m}$ are given by Lemma \ref{Rogosinski-step}.
 \end{theorem}
\begin{proof}
For any $q\in \mathbb{B}_{I}$, it follows that
$$S_{m}(r_{m}q)=\frac{1}{2\pi}\int_{0}^{2\pi}\big(1+ 2{\rm{Re}}\,\sum _{n=1}^{m}r_{m} ^{n}e^{nI\theta} \big)f(e^{I\theta}q) d\theta.$$

Note that,  for any fixed $I\in \mathbb{S}$ and each positive integer $m$, by Lemma \ref{Rogosinski-step},
$$  \frac{1}{2\pi}\big(1+ 2{\rm{Re}}\,\sum _{n=1}^{m}r_{m} ^{n}e^{nI\theta} \big) d\theta$$
is a probability measure on the interval $[0,2\pi]$ and the  image $f(\mathbb{B})$ is convex, the desired result follows immediately.
\end{proof}

\subsection{One-Quarter  Covering  Theorem}
As an application of  (\ref{eq:1})  in Theorem \ref{growth-distortion},   the Koebe type  one-quarter covering  theorem for the class $\mathcal{S}^{*}$ is obtained.

\begin{theorem}\label{th:Koebe-theorem}
Let $f  \in \mathcal{S}^{*}$,  then it holds that
$$B(0,\frac14)\subset f(\mathbb B).$$
The estimate is precise.
\end{theorem}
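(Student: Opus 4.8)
The plan is to combine the sharp lower growth bound from Theorem \ref{growth-distortion} with the open mapping theorem for slice regular functions. First I would record the elementary fact that the left-hand side of (\ref{eq:1}), namely $\rho(r)=r/(1+r)^{2}$, is strictly increasing on $(0,1)$ with $\rho(r)\to 1/4$ as $r\to 1$; consequently, for any $f\in\mathcal{S}^{*}$ one has $|f(q)|\geq \rho(|q|)$, and, crucially, $\liminf|f(q)|\geq \tfrac14$ whenever $|q|\to 1$. This is the only analytic input needed beyond the topology.

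Next, since $f'(0)=1$ the function $f$ is non-constant, so by the open mapping theorem for slice regular functions (see the monograph \cite{GSS}) the image $f(\mathbb{B})$ is an open subset of $\mathbb{H}$ containing $f(0)=0$. Arguing by contradiction, I would suppose $B(0,\tfrac14)\not\subseteq f(\mathbb{B})$ and set $d=\inf\{|w|:w\notin f(\mathbb{B})\}$; then $d<\tfrac14$, and $B(0,d)\subseteq f(\mathbb{B})$ since any $w$ with $|w|<d$ cannot lie in the complement. Choosing $w_{n}\notin f(\mathbb{B})$ with $|w_{n}|\to d$ and using compactness of the shell $\{d\leq |w|\leq d+1\}$, I extract a limit $w_{0}$ with $|w_{0}|=d$; the complement of $f(\mathbb{B})$ being closed forces $w_{0}\notin f(\mathbb{B})$, while $w_{0}(1-\eps)\in f(\mathbb{B})$ for small $\eps>0$ gives $w_{0}\in\overline{f(\mathbb{B})}$. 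Hence there is a sequence $q_{n}\in\mathbb{B}$ with $f(q_{n})\to w_{0}$.

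The heart of the argument is then a dichotomy for this sequence. If $\{q_{n}\}$ admits a subsequence converging to some $q_{*}\in\mathbb{B}$, continuity of $f$ forces $f(q_{*})=w_{0}\in f(\mathbb{B})$, contradicting $w_{0}\notin f(\mathbb{B})$; therefore $|q_{n}|\to 1$. But then the growth estimate yields $|w_{0}|=\lim_{n}|f(q_{n})|\geq \lim_{n}\rho(|q_{n}|)=\tfrac14$, contradicting $d<\tfrac14$. This proves $B(0,\tfrac14)\subseteq f(\mathbb{B})$. The main obstacle, and the only place the boundary behaviour must be genuinely controlled, is exactly this dichotomy: one must invoke the lower growth bound to forbid a sequence escaping to $\partial\mathbb{B}$ from producing an image value of small modulus; everything else is soft point-set topology made available by the open mapping theorem.

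For the optimality of the constant $1/4$, I would exhibit the quaternionic Koebe function $f(q)=q(1-q)^{-\ast2}=\sum_{n=1}^{+\infty}q^{n}n$, which belongs to $\mathcal{S}^{*}$ (the case $u=1$ of the extremal functions in Theorem \ref{growth-distortion}). Since $u=1$ places $f$ in $\mathcal{N}(\mathbb{B})$, its restriction to every slice $\mathbb{C}_{I}$ is the classical Koebe function $z\mapsto z/(1-z)^{2}$, whose image omits the real point $-1/4$. As $-1/4\in\mathbb{R}\subseteq\mathbb{C}_{I}$ for all $I\in\mathbb{S}$ and $f$ preserves each slice, the value $-1/4$ is omitted on every slice, whence $-1/4\notin f(\mathbb{B})$. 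Thus no ball centred at the origin of radius exceeding $1/4$ can be contained in $f(\mathbb{B})$, and the constant $1/4$ is sharp.
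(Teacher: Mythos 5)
Your proof is correct and follows essentially the same route as the paper: the lower bound in (\ref{eq:1}) gives $\liminf_{|q|\to 1}|f(q)|\geq \tfrac14$, the open mapping theorem makes $f(\mathbb{B})$ an open neighbourhood of $0$, and the quaternionic Koebe function $q(1-q)^{-\ast2}=q(1-q)^{-2}$ shows sharpness. The only difference is that you spell out the point-set dichotomy which the paper delegates to \cite[Proposition 3.1]{Xu-Wang}, and your spelled-out version is sound.
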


\begin{proof}The first inequality in $(\ref{eq:1})$ implies  that
 \begin{eqnarray}\label{Xu-Wa}
 \liminf_{q\rightarrow \partial \mathbb{B}}|f(q)|\geq\frac{1}{4}.
\end{eqnarray}
From the open mapping theorem (see \cite[Theorem 7.7]{GSS}), the image set $f(\mathbb{B})$ is  open containing the origin point  $0$.  This together with (\ref{Xu-Wa}) and \cite[Proposition 3.1]{Xu-Wang} shows that  $f(\mathbb B)$ contains  the ball $B(0,1/4)$.

The preciseness of  estimate can be shown by  the slice regular function
$$f(q)= q(1-q)^{-2} , \quad \forall \ q\in \mathbb{B}.$$
Then
$${\rm{Re}}\,  \big( f(q)^{-1} qf'(q)\big)={\rm{Re}}\, \big((1+q)(1-q)^{-1}\big)=\frac{1-|q|^{2}}{|1-q|^{2}}>0, \quad \forall \ q\in \mathbb{B},$$
which shows that $f\in \mathcal{S}^{*}$.
It is easy to show that  $f(\mathbb B)=\big \{q\in \mathbb{H}: q   \notin (-\infty, -1/4]  \big \}$    contains    $B(0, 1/4)$ while  contains no ball centered at $0$ with radius  larger than $ 1/4$, as desired.
\end{proof}

Now we can offer another    covering theorem from Theorem \ref{starlike-algebra-higher},  instead of Theorem \ref{growth-distortion}.  Specially, taking  $m=2$ in Theorem \ref{starlike-algebra-higher}, we have
\begin{theorem}
Let $ f(q) = q+\sum_{n=3}^{+\infty} q^{n}a_{n}  \in \mathcal{S}^{*}$,  then it holds that
$$B(0,\frac12)\subset f(\mathbb B).$$
\end{theorem}

\subsection{Bloch-Landau Theorem}
In  complex analysis, a well-known result of Bloch-Landau theorem says that if $F$ is a holomorphic function in $\mathbb{D}$ with the only restriction  $F'(0) = 1$, then the image of $F$ contains a disc of radius $r\geq b$, where $ b$ is an absolute constant.

Now we introduce its corresponding version of slice regular functions.
For $a\in \mathbb{B}$ and the slice regular function $f$   on $\mathbb{B}$ with $f'(0)=1$,  denote by $r(a,f)$ the radius of the largest ball contained in the image of $f$   centered at $f(a)$ and denote $r(f)=\sup \{r(a,f), a\in \mathbb{B} \}$.  In \cite{Xu-Wang}, we have proved that $r(f)>0.23$ for any slice regular function $f $ with Bloch seminorm $\sup_{q\in \mathbb{B}}(1-|q|^{2})|f'(q)|=1$.

 In this paper, we restrict  functions $f$ in  $r(f)$ to be   slice regular functions with convex image.  Define
\begin{eqnarray}\label{convex-constant}
 C=\inf \{ r(f) :  f  \ {\rm{is \  slice\ regular \ on\ \mathbb{B}   \ such \ that }}\, \ f'(0)=1 \ {\rm{ and }}\,    f(\mathbb{B}) \ {\rm{ convex}}\,\}.
\end{eqnarray}

Let us prove Theorem \ref{covering-for-convex} before giving the  upper and lower bounds of   $C$.
\begin{proof}[Proof of Theorem \ref{covering-for-convex}]
Let   $f$ be as described in the theorem and assume that  $f(0)=0$ for otherwise we can consider the slice regular function $f-f(0)$.
Let $p\in \partial f(\mathbb{B})$ be a point at minimum distance from  the origin  $0$. From the open mapping theorem for slice regular function \cite{Gentili-Stoppato}, we have $|p|>0$. If $|p|=+\infty$, the theorem has been  proved. Otherwise, $0<|p|<+\infty$, we obtain that
 \begin{eqnarray}\label{convex-condition}
{\rm{Re}}\,  \big( f(q)\overline{p}\big)< |p|^{2}, \quad \forall \ q\in \mathbb{B},
\end{eqnarray}
since $f(\mathbb{B})$ is convex.

Consider the slice regular function
$$ g(q)=(2|p|^{2}-f(q)\overline{p})^{-*}*f(q)\overline{p}, \quad \forall \ q\in \mathbb{B}.$$
Then  it follows that $g(0)=0 $ and $g'(0)=  (2p)^{-1}$.
From  (\ref{convex-condition}), we have
$$|f(q)\overline{p}|<\big|2|p|^{2}-f(q)\overline{p}\big|, \quad \forall \ q\in \mathbb{B},$$
which implies, by (\ref{modulus-inverse}) in Proposition \ref{main-proposition},  $|g(q)|<1$ for all $q\in \mathbb{B}$.
The Schwarz lemma for slice regular functions in Theorem \ref{generalized-Schwarz-lemma} implies that $|g'(0)|\leq1$. Hence,  $|p|\geq 1/2$,
which shows the image $f(\mathbb B)$ contains the open ball $B(0,1/2)$.
To see that the   constant $1/2$ is optimal, we consider the slice regular function given by
$$f(q)=q(1-q)^{-1} , \quad \forall \ q\in \mathbb{B}.$$
 It is easy to show that  $f'(0)=1$  and
 $f(\mathbb B)=\{q\in \mathbb{H}: {\rm{Re}}\, q>-1/2\}$ is convex and contains    the open ball   centered at $0$ of  radius $1/2$ while    $ 1/2$ is optimal, as desired.
\end{proof}

As a consequence of Theorem \ref{covering-for-convex}, we give  a Bloch-Landau theorem for slice regular functions with convex image.

\begin{theorem}The constant $C$  given in $(\ref{convex-constant})$ has the following estimate
  $$\frac{1}{2} \leq C \leq \frac{\pi}{4}. $$
\end{theorem}
\begin{proof}
From Theorem  \ref{covering-for-convex}, it suffices to show that $C \leq \pi/4$.
To this end, let us consider the slice regular function
$$f(q)=\sum_{n=0}^{+\infty} \frac{q^{2n+1}}{2n+1}, \quad q \in \mathbb{B},$$
with $f'(0)=1.$

Note  that the image $f(\mathbb{B})=\{q\in \mathbb{H}: |{\rm{Im}}\, q |<\frac{\pi}{4}\}$ is convex, the desired result follows.
\end{proof}
%To see this, set $q=re^{I\theta}$ with $r\in [0,1), I\in \mathbb{S}$ and$\theta\in [0,2\pi). $ Then we have $${\rm{Im}}\, f(q)=\frac{1}{2}(f(q)-\overline{f(q)})=I\sum_{n=0}^{+\infty} \frac{r^{2n+1}}{2n+1}\sin (2n+1)\theta.$$Thus$$|{\rm{Im}}\, f(q)|=\Big|\sum_{n=0}^{+\infty} \frac{r^{2n+1}}{2n+1}\sin (2n+1)\theta\Big|=\frac{1}{2}\Big| {\rm{Im}}\,\log \frac{1+z}{1-z}\Big|<\frac{\pi}{4}.$$

We have proved the    Bieberbach   conjecture    over quaternions for slice close-to-convex functions. It is still an open problem if the Bieberbach   conjecture holds true for  injective slice regular functions.

$\mathbf{Open \;\;question:}$
Does it hold that  $|a_{n}|\leq n, \ \  n = 2, 3, \ldots,$  provided that  $f(q)= q+ \Sigma_{n=2}^{+\infty}q^{n} a_{n}$ is  an   injective slice regular  function in the open unit ball  $\mathbb{B}$?

\bibliographystyle{amsplain}

\vskip 10mm

\end{document}